\renewcommand{\textsc}[1]{\text{\it #1}}
\renewcommand{\P}{\ensuremath{\textsc{prop}}}
\newcommand{\Thetafull}{\ensuremath{\Theta}}
\newcommand{\Thetasmall}{\ensuremath{\Theta'}}
\newcommand{\grow}{\ensuremath{\text{GROW}}}
\newcommand{\gro}{\ensuremath{\text{GRO}}}
\newcommand{\commentout}[1]{}
\newcommand{\cE}{\ensuremath{\mathcal E}}
\newcommand{\cG}{\mathcal{G}}
\newcommand{\cH}{\mathcal{H}}
\newcommand{\cP}{\mathcal{P}}
\newcommand{\cT}{\mathcal{T}}
\newcommand{\cN}{\mathcal{N}}
\newcommand{\cW}{\mathcal{W}}
\newcommand{\cX}{\mathcal{X}}
\newcommand{\cY}{\mathcal{Y}}
\newcommand{\cZ}{\mathcal{Z}}
\newcommand{\cF}{\mathcal{F}}
\newcommand{\Yvec}{\ensuremath{\mathbf Y}}
\newcommand{\yvec}{\ensuremath{y}}
\newcommand{\Vvec}{\ensuremath{\mathbf V}}
\newcommand{\Ycomp}[1]{\ensuremath{Y}_{#1}}
\newcommand{\Vcomp}[1]{\ensuremath{V}_{#1}}
\newcommand{\Ydata}{\mathbf Y}
\newcommand{\Yb}{\Ydata}
\newcommand{\bX}{\mathbf X}
\newcommand{\bV}{\mathbf V}
\newcommand{\bx}{\mathbf x}
\newcommand{\by}{\mathbf y}
\newcommand{\bY}{\mathbf Y}
\newcommand{\bR}{\mathbf R}
\renewcommand{\S}{\ensuremath{\textsc e}}
\newcommand{\sval}{\ensuremath{E}}
\newcommand{\eval}{\ensuremath{E}}
\newcommand{\p}{\ensuremath{\textsc p}}
\newcommand{\pval}{\ensuremath{\textsc{p}}}
\newcommand{\reals}{\mathbb{R}}
\newcommand{\naturals}{\mathbb{N}}
\newcommand{\Exp}{\ensuremath{\mathbf E}}
\newcommand{\priordist}{\ensuremath{W}}
\newcommand{\priorset}{\ensuremath{\cW}}
\newcommand{\dif}{\,\textnormal{d}}
\newcommand{\bmprior}[1]{\ensuremath{{p}_{\text{\tiny $#1$}}}}
\newcommand{\bmdprior}[3]{\ensuremath{{p}_{{\text{\tiny $#3$}}}(#2)}}
\newcommand{\WCauchy}{\ensuremath{W^{\text{\sc c}}}}
\newtheorem{theorem}{Theorem}
\newtheorem{proposition}{Proposition}
\newtheorem{owndefinition}{Definition}
\newtheorem{corollary}{Corollary}
\newtheorem{ownexample}{Example}
\DeclareRobustCommand{\VANDER}[3]{#2}
\title{Safe Testing}
\date{\today}
\author[1]{Peter Gr{\"u}nwald}
\author[2]{Rianne {de Heide}}
\author[3]{Wouter M. Koolen}
\affil[1,3]{Centrum Wiskunde \& Informatica, Amsterdam, The Netherlands}
\affil[1]{Leiden University, Leiden, The Netherlands}
\affil[2]{Vrije Universiteit, Amsterdam, The Netherlands}
\affil[3]{University of Twente, Enschede, The Netherlands}
\affil[1,3]{ $\{$pdg, wmkoolen$\}$@cwi.nl} 
\affil[2]{r.de.heide@vu.nl}
\begin{document}

\maketitle

\begin{abstract}
We develop the theory of hypothesis testing based on 
the $\S$-value, a notion of evidence that, unlike the $\p$-value,
allows for effortlessly combining results from several studies in the common scenario 
where the decision to perform a new study may depend on previous outcomes. Tests based on
$\S$-values are safe, i.e.\ they preserve Type-I error guarantees, under such {\em optional continuation}. We define growth-rate optimality (GRO) as an analogue of power in an optional continuation context, and we show how to construct GRO $\S$-variables for general testing problems with composite null and alternative, emphasizing  models with nuisance parameters. GRO $\S$-values take the form of Bayes factors with special priors. We illustrate the
theory using several classic examples including a
one-sample safe $t$-test and the $2 \times 2$ contingency table. Sharing Fisherian, Neymanian and Jeffreys-Bayesian interpretations, $\S$-values may provide a methodology acceptable to  adherents of all three schools.

\paragraph{Keywords:}Bayes Factors, E-Values, Hypothesis Testing, Information Projection, Optional Stopping, Test Martingales
\end{abstract}

\section{Introduction and Overview}
\label{sec:introduction}
We wish to
test the veracity of a {\em null hypothesis\/} $\cH_0$, often in
contrast with some {\em alternative hypothesis\/} $\cH_1$, where both
$\cH_0$ and $\cH_1$ represent sets of distributions on some given
sample space. Our theory is based on {\em $\S$-test
	statistics}. These are simply  {\em nonnegative\/}
random variables  that satisfy the inequality:
\begin{equation}\label{eq:basic}
\text{for all $P \in \cH_0$:}\ \ {\bf E}_{P}[\sval] \leq 1.
\end{equation}
We refer to $\S$-test statistics as $\S$-variables, and to the value
they take on a given sample as the $\S$-{\em value}, emphasizing that
they are to be viewed as an alternative to, and in many cases an
improvement of, the classical $\p$-value. Note that {\em large\/}
$\S$-values correspond to evidence against the null: for given
$\S$-variable $E$ and $0 \leq \alpha \leq 1$, we define
the {\em threshold test corresponding to $E$ with significance level
	$\alpha$}, as the test that rejects $\cH_0$ iff $E \geq 1/\alpha$. We will see, in a sense to be made more precise, that this test is {\em safe
	under optional continuation with respect to Type-I error}. 

\paragraph{Motivation}  $\p$-values and standard null hypothesis testing have come under intense scrutiny in recent years \citep{wasserstein2016asa,benjamin2018redefine}. $\S$-variables and safe tests offer several advantages. Most importantly, in contrast to $\p$-values,
$\S$-variables behave excellently under {\em optional continuation}, the
highly common practice in which the decision to perform additional
tests partly depends on the outcome of previous tests.
They thus seem particularly promising when
used in meta-analysis \citep{terschure2019accumulation}.
A second
reason is their enhanced {\em interpretability\/}: 
they have a very concrete  (monetary) interpretation as `evidence against the null' which remains valid even if one dismisses concepts such as  `significance' altogether, as recently advocated by
\cite{amrhein2019scientists}. 
A third is their
flexibility: as we show in this paper, $\S$-variables can be based on Bayesian prior knowledge, on earlier data, but also on minimax performance considerations, in all cases  preserving frequentist Type I error
guarantees.

\paragraph{Overall Contribution and Contents}
Although the concept is much older (Section~\ref{sec:related}), the interest in $\S$-values and the related test martingales has exploded over the last three years \citep{WangR20,VovkW21,Shafer19,henzi2021valid,waudbysmith2021estimating,OrabonaJ21}. 
In this paper, we further develop the theory of $\S$-values, by providing general optimality criteria and show how to design $\S$-variables that satisfy them. We 
do this on the basis of four ever more general versions of a single novel theorem, Theorem~\ref{thm:maina}. In its first incarnation, in Section~\ref{sec:GRO}, Theorem~\ref{thm:maina} already tells us that one can design nontrivial, useful $\S$-variables for a wide 
class of  testing problems with composite null and alternative. 
This first instance relies on using a prior $W_1$ on the alternative $\cH_1$. The ensuing $\S$-variables, while guaranteeing frequentist Type-I error control, will have a GRO ({\em growth-rate optimality}) property under $W_1$. 
This GRO $\S$-variable will be a Bayes factor with a special prior on the null. 
More general versions of the theorem allow us to  construct $\S$-variables when no  prior on $\cH_1$ is available. These satisfy either a direct worst-case optimality criterion (GROW) or a relative one (REGROW). In 
our example applications we restrict ourselves to classical testing scenarios such as 1-dimensional exponential families, the $2 \times 2$ contingency table, and the $t$-test. Importantly, the latter two have {\em nuisance\/} parameters and the GRO approach provides a generic methodology for dealing with them. For the $t$-test setting, GRO 
$\S$-variables turn out to be  Bayes factors based on the right Haar prior, as known from objective Bayes analyses \citep{berger1998bayes}. For the $2\times 2$-setting, GRO $\S$-values do not correspond to standard Bayes factors. 

We then, in Section~\ref{sec:GROanalysis} and~\ref{sec:competitive}, investigate optional continuation, stopping and GRO in more detail, and we assess how competitive the $\S$-variables we designed are compared to classical methods in terms of the amount of data needed before a certain desired power or growth rate can be reached. The final three sections put our work in context. We provide a historical overview of $\S$-value related work in Section~\ref{sec:related}, critically discuss GRO in Section~\ref{sec:GROdiscussion},  and then, in Section~\ref{sec:theory}, taking a step back, we come to the inescapable conclusion that $\S$-variables unify and correct ideas from the three main paradigms of testing: Fisherian, Neyman-Pearsonian and Jeffreysian.
But first, in the remainder of this introduction, we explain the  three main interpretations of $\S$-variables
(Section~\ref{sec:three}), 
we briefly introduce our main theorem (Section~\ref{sec:thmprelude}) and, in Section~\ref{sec:os}, we explain the main advantage of $\S$-variables over 
$\p$-values in terms of optional continuation. We claim no technical novelty for this part, which mainly restates and reinterprets existing results\footnote{
Since the first version of the present paper appeared on arXiv, various subsets of these results have been widely discussed in various recent papers, but we still re-state them here to keep the paper self-contained.
}. 
We defer to the appendices all longer proofs and details that would distract from the main story. 

\subsection{The three main interpretations of $\S$-variables}
\label{sec:three}
\paragraph{1.\ First Interpretation: Gambling}
The first and foremost interpretation of $\S$-variables is in terms of {\em money},
or, more precisely, {\em  Kelly {\rm (\citeyear{Kelly56})} gambling\/}. Imagine
a ticket (contract, gamble, investment) that one can buy for 1\$, and
that, after realisation of the data, pays $\eval \, \$$; one may buy several
and positive fractional amounts of tickets. \eqref{eq:basic} says that, if the
null hypothesis is true, then one expects not to gain any money by
buying such tickets: for any $r \in \reals^+$, upon buying $r$ tickets
one expects to end up with $r \Exp[\eval] \leq r \, \$ $.  Therefore, if the
observed value of $\eval$ is large, say $20=1/0.05$, one would have gained a lot
of money after all, indicating that something might be wrong about the
null.
\paragraph{2.\ Second Interpretation:
	Conservative $\p$-Value, Type I Error Probability}
Recall that a (strict) $\p$-value is a random variable $\pval$ such that
for all $0 \leq \alpha \leq 1$, all $P_0 \in \cH_0$,
\begin{equation}\label{eq:pval} P_0(\pval \leq \alpha) = \alpha.
\end{equation}
A {\em conservative\/} $\p$-value is a random variable for which
\eqref{eq:pval} holds with `$=$' replaced by `$\leq$'.
There is a close connection between (small) $\p$- and (large) $\S$-values:
\begin{proposition}\label{prop:pval}
	For any given $\S$-variable $\eval$, define $\pval_{[\S]} \coloneqq 1/\eval$. Then $\pval_{[\S]}$ is a conservative $\p$-value. As a consequence, for every $\S$-variable $\eval$, any $0 \leq \alpha \leq 1$, the corresponding threshold-based test has Type-I
	error guarantee $\alpha$, i.e.\ for all $P \in \cH_0$, 
	\begin{equation}\label{eq:reject}
	P(\eval \geq 1/\alpha ) \leq \alpha.
	\end{equation}
\end{proposition}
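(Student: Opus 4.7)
The plan is to derive both claims from a single application of Markov's inequality to the nonnegative random variable $S$. Recall that by definition $S \geq 0$ and $\Exp_P[S] \leq 1$ for every $P \in \cH_0$; Markov immediately controls the upper tail of $S$, and this upper tail is precisely what both the conservative $\p$-value property and the Type-I error bound refer to.

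Concretely, I would first adopt the convention $1/0 \coloneqq \infty$, so that $\p_{[\S]}$ is a well-defined $[0,\infty]$-valued random variable. The boundary case $\alpha = 0$ is then trivial: integrability forces $P(S = \infty) = 0$ for any $P \in \cH_0$ (otherwise $\Exp_P[S] = \infty$), so $P(\p_{[\S]} \leq 0) = P(S = \infty) = 0$. For $\alpha \in (0,1]$, observe the set identity
\[
\{\p_{[\S]} \leq \alpha\} \;=\; \{1/S \leq \alpha\} \;=\; \{S \geq 1/\alpha\},
\]
and apply Markov's inequality: for every $P \in \cH_0$,
\[
P(\p_{[\S]} \leq \alpha) \;=\; P(S \geq 1/\alpha) \;\leq\; \alpha \cdot \Exp_P[S] \;\leq\; \alpha ,
\]
which is exactly the conservative $\p$-value property. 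The Type-I error bound \eqref{eq:reject} then falls out for free, since by the definition of $\decrule_\alpha$ on page~\pageref{page:decrule} we have $\decrule_\alpha(S) = \reject$ iff $S \geq 1/\alpha$, which is the very event just bounded.

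The main obstacle here is honestly cosmetic rather than mathematical: the argument is essentially one line of Markov, and the only care needed is in pinning down the convention $1/0 = \infty$ and verifying that the endpoints $\alpha \in \{0,1\}$ are covered — both of which follow from the nonnegativity of $S$ together with $\Exp_P[S] \leq 1$. No additional structure on $\cH_0$ (composite or simple, parametric or not) is required, which is consistent with the paper's emphasis that $\S$-values yield Type-I guarantees for completely general nulls.
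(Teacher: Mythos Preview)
Your proof is correct and follows exactly the paper's own approach: the paper's proof is the one-line ``Markov's inequality gives $P(S \geq \alpha^{-1}) \leq \alpha\,\Exp_P[S] \leq \alpha$. The result is now immediate.'' Your version is simply a more carefully annotated rendition of the same argument, with explicit handling of the $\alpha=0$ endpoint and the $1/0=\infty$ convention.
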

\begin{proof}{\bf (of Proposition~\ref{prop:pval})}
	Markov's inequality gives $P(\eval \geq 1/\alpha) \leq \alpha
	\Exp_P[\eval] \leq \alpha$. 
\end{proof}
While reciprocals of $\S$-variables thus give a special type of conservative $\p$-values, reciprocals of standard
$\p$-values satisfying \eqref{eq:pval} are by no means $\S$-variables; if
$\eval$ is an $\S$-variable and $\pval$
is a standard $\p$-value, and they are
calculated on the same data, then we will usually observe $\pval \ll 1/\eval$
so with e-values we need more extreme data in order to  reject the null (see Section~\ref{sec:competitive} for a nuanced analysis and Section~\ref{sec:related} for more on e-p conversions).

\paragraph{Combining 1.\ and 2.: Optional Continuation}
Proposition~\ref{prop:optional.stopping}
below shows that {\em multiplying\/}
$\S$-variables $\eval_{(1)}, \eval_{(2)}, \ldots$ for tests based on respective independent samples
$\Ydata_{(1)}$, $\Ydata_{(2)}, \ldots$ (with each $\Ydata_{(j)}$ being the batch of outcomes for
the $j$-th test), gives rise to new $\S$-variables, even if the decision
whether or not to perform the test resulting in $\eval_{(j)}$ was based on
the value of earlier test outcomes $\eval_{(j-1)}, \eval_{(j-2)}, \ldots$
As a
result (Corollary~\ref{cor:villerobbins}), {\em the Type I-Error Guarantee
	\eqref{eq:reject} remains valid even under this `optional
	continuation' of testing}. Just as importantly, in contrast to $\p$-values, $\S$-variables satisfy an `optional continuation principle': 
whether an observed $\S$-value is valid or not does not depend on whether or not you would have performed an additional study and gathered additional evidence in situations that did not occur.

An indication that something like this might be true is immediate from our gambling interpretation: if we start by investing \$1 in $\eval_{(1)}$ and,
after observing $\eval_{(1)}$, reinvest all our new capital $\$\eval_{(1)}$ into
$\eval_{(2)}$, then after observing $\eval_{(2)}$ our new capital will obviously be
$\$ \eval_{(1)} \cdot \eval_{(2)}$, and so on. If, under the null, we do not expect to
gain any money for any of the individual gambles $\eval_{(j)}$, then,
intuitively, we should not expect to gain any money under whichever
strategy we employ for deciding whether or not to reinvest (just as
you would not expect to gain any money in a casino irrespective of
your rule for re-investing and/or stopping and going home).

\paragraph{3.\ Third Interpretation: Bayes Factors}
For convenience, from now on we write the models $\cH_0$ and $\cH_1$
as
$
\cH_0 = \{ P_{\theta} : \theta \in \Theta_0\} \ \ ; \ \ 
\cH_1 = \{ P_{\theta} : \theta \in \Theta_1\},
$ where $\Theta_0, \Theta_1\subset \Thetafull$, and $\{P_{\theta}: \theta \in \Thetafull\}$ represents a general family of distributions or random processes, defined relative to some given sample space and $\sigma$-algebra or filtration. $\Ydata
= Y^N = (Y_1, \ldots, Y_N)$, a vector of $N$ outcomes, represents our data.
$N$ may be a fixed sample size
$n$ but can also be a random stopping time. We assume that, under every $P_{\theta}$ with $\theta \in \Thetafull$, $\Ydata$ has a  probability density $p_{\theta}$ relative to some fixed underlying measure $\mu$.  
In the
Bayes factor approach to testing, one associates both $\cH_j$ with a
{\em prior\/} $\priordist_j$, which is simply a probability
distribution on $\Theta_j$, and a {\em Bayes marginal probability
	distribution\/} ${P}_{\priordist_j}$, with density  (or mass) function given
by\label{page:firstprior}
\begin{equation}\label{eq:bayesmarginal}
\bmdprior{j}{\Ydata}{\priordist_j} \coloneqq \int_{\Theta_j} p_{\theta}(\Ydata) \dif \priordist_{j}(\theta).
 \end{equation}
The {\em Bayes factor\/} is then given as:
\begin{equation}\label{eq:bayesfactor}
\mathsf{BF} \coloneqq   \frac{\bmdprior{1}{\Ydata}{\priordist_1}}{\bmdprior{0}{\Ydata}{\priordist_0}}.
\end{equation}
Whenever $\cH_0 = \{P_0 \}$ is {\em simple}, i.e., a singleton, then
the Bayes factor is also a (sharp, i.e.\ with expectation exactly 1) $\S$-variable, since we must then
have that $\priordist_0$ is degenerate, putting all mass on $0$, and
$\bmprior{\priordist_0} = p_0$, and then for all $P \in \cH_0$,
i.e.\ for $P_0$, we have, assuming $P_0$ has strictly positive density,
\begin{equation}\label{eq:itisone}
\Exp_{P}[\mathsf{BF}] = \int p_0(y) \cdot \frac{\bmdprior{1}{y}{\priordist_1}}{{p}_0(y)} \dif \mu(y)  = 1.  
\end{equation}
For such $\S$-variables that are really
simple-$\cH_0$-based Bayes factors, Proposition~\ref{prop:pval}
reduces to the well-known {\em universal bound\/} for likelihood ratios
\citep{royall1997statistical}.
When $\cH_0$ is itself composite, most Bayes factors $\mathsf{BF} =
p_{W_1}/p_{W_0}$ will {\em not\/} be $\S$-variables any more, since
for $\mathsf{BF}$ to be an $\S$-variable we require \eqref{eq:itisone} to hold
for {\em all\/} $P_{\theta}, \theta \in \Theta_0$, whereas in general it only
holds for $P=P_{W_0}$. Nevertheless, Theorem~\ref{thm:maina} (in its first, simplest version in Section~\ref{sec:GRO})
implies that, under weak conditions, 
for every prior $W_1$ on $\Theta_1$ there always exists a corresponding prior $W_0$ on $\Theta_0$, for which $\mathsf{BF} = p_{W_1}/p_{W_0}$ is an $\S$-variable after all. More generally, in all our examples $\S$-variables invariably take on a Bayesian form, though sometimes with highly unusual (e.g.\ degenerate) priors.

\subsection{This Paper: Beyond Simple Nulls, Beyond Available Priors}
\label{sec:thmprelude}
In this paper, we focus on general, composite $\cH_0$. The only assumption on $\cH_0$ is the existence of densities as above --- we make this assumption because it allows for a completely general characterisation of GRO (`growth-rate optimal') $\S$-variables as in Theorem~\ref{thm:maina}. Still, useful $\S$-variables for nonparametric settings without densities do exist \citep{waudbysmith2021estimating,OrabonaJ21}. 

Theorem~\ref{thm:maina} in its first form in Section~\ref{sec:GRO} tells us how to choose an $\S$-variable that is optimal in the GRO sense if a prior $W_1$ on $\cH_1$ is given. Roughly speaking, GRO means that the $\S$-variable tends to grow fast under $\cH_1$ as more data come in, thereby generating substantial evidence against $\cH_0$. The generalisations of Section~\ref{sec:grow}--\ref{sec:regrow}, extend the GRO idea to  $\S$-variables when no such prior is available. Section~\ref{sec:grow} deals with a basic maximin optimality approach, which is appropriate if there is a single parameter of interest, a minimum relevant effect size, and no nuisance parameter. Section~\ref{sec:regrow} describes  {\em relative\/} maximin optimal $\S$-variables, appropriate if there is no minimal effect size and/or nuisance parameters are present. This culminates in the fully general version of Theorem~\ref{thm:maina} in Section~\ref{sec:fulltheorem} which is also applicable to hypotheses with nuisance parameters that satisfy a group invariance, such as in the $t$-test. To show that the $\S$-variable we propose for the $t$-test  is indeed optimal we need a special case of an additional result, Theorem~4.2 of the paper \citep{perez2022estatistics}, which, for convenience, we restate. 
But before embarking on these results, we explain the benefits of $\S$-variable based tests in detail.

\subsection{Optional Continuation}\label{sec:os} We defined $\S$-variables for a single experiment. We now discuss sequential experimentation and how $\S$-values can be combined to accumulate evidence against the null. To this end, let us imagine a sequence of random variables 
$\bY_{(1)}, \bY_{(2)}, \ldots$ representing the outcomes of experiments/studies. We will not (except for illustration purposes later on) make use of any internal structure of the $\bY_{(j)}$, which in particular may come to us as batches of varying lengths.
\begin{owndefinition}\label{def:cond.safe}
Let $\cH_0$ be a collection of distributions for a sample space equipped with filtration $(\mathcal F_{(m)})_m$. We say that 
	$\eval_{(m)}$ is a  \emph{$\cF_{(m-1)}$-conditional $\S$-variable} (relative to null hypothesis $\mathcal H_0$) if it is a nonnegative random variable that is $\cF_{(m)}$-measurable and $\text{for all $P \in \cH_0$:}\ \ {\bf E}_{P}[\eval_{(m)} \mid \mathcal F_{(m-1)} ]  \leq 1 \; \text{a.s.}$. If, for each $m$,  $\eval_{(m)}$ is an {$\cF_{(m-1)}$-conditional $\S$-variable}, we will call $\{\eval_{(m)}\}_m$ a conditional $\S$-variable collection relative to $(\mathcal F_{(m)})_{m}$ .
\end{owndefinition}
In standard cases, $\cF_{(m)}$ represent all that is known to us at time $m$ (that is, after having observed the $m$-th study). 
Then it
is simply $\sigma(\bY^{(m)})$, with $\bY^{(m)} = (\bY_{(1)}, \ldots, \bY_{(m)})$ the sequence of outcomes of previous studies,
and we could then rewrite the expectation  elementarily as ${\bf E}_{P}[\eval_{(m)} \mid  {\bf Y}^{(m-1)} ]$. 
More generally though, $\cF_{(m)}$
is allowed to be a coarser filtration as well: as long as for all $m$, $E_{(m)}$ is $\cF_{(m)}$-measurable, we can safely engage in optional continuation in the sense of Corollary~\ref{cor:villerobbins} below, as explained in Section~\ref{sec:GROanalysis}.  
On the other hand, $\cF_{(m)}$ could also be finer, 
including nonstochastic side information such as `there is money to do an additional study' or covariates; we briefly describe such extensions, as well as subtleties that may arise, in Appendix~\ref{app:first}. 

Intuitively, $\cF_{(m-1)}$-conditional $\S$-values measure the conditional evidence in round $m$ (representing the $m$-th study) against $\mathcal H_0$, and hence their running product measures the total evidence (such a running product would then be a {\em test super-martingale\/} \citep{shafer2011test}, i.e.\ a nonnegative super-martingale with starting value $\leq 1$, under every element of the null). We may turn this running product into one quantity by adding a stopping rule. The following result, both parts of which are a direct implication of Doob's optional stopping theorem \citep{Williams91}  
states that, irrespective of the stopping rule/time, we obtain a fair measure of evidence.
\begin{proposition}\label{prop:optional.stopping}
\begin{enumerate}\item 	Let $\{E_{(m)}\}_m$ be a collection of conditional $\S$-variables relative to filtration $\mathcal (\mathcal F_{(m)})_m$. Then the running product $(E^{(m)})_{m}$ with $E^{(m)} := \prod_{j=1}^m E_{(j)}$ is a test super-martingale w.r.t.\ each  $P \in \mathcal H_0$.
\item Any process $(E^{(m)})_m$ that is a test super-martingale w.r.t.\ each  $P \in \mathcal H_0$ is also an {\em e-process\/} \citep{fork-convex} w.r.t.\ each $P \in \mathcal H_0$, which by definition means that for any stopping time $\tau$ (not necessarily finite), the stopped value $E^{(\tau)}$ is a (standard non-conditional) $\S$-value for $\mathcal H_0$.
	\end{enumerate}
\end{proposition}
Proposition~\ref{prop:optional.stopping} says that, no matter when we stop collecting batches of data, the resulting product is an $\S$-variable and therefore a test based on it preserves Type-I error guarantees by Proposition~\ref{prop:pval}.
We note that, after the first version of the present paper appeared on arxiv, it was found that for some composite $\cH_0$ there exist useful e-processes that are not test-martingales
\citep{fork-convex}; we cannot build these as products of the conditional e-variables that our main theorem, presented in the next section, generates. On the other hand, as a referee suggested, in our optional continuation context the use of conditional e-variables as basic building blocks may also have advantages over general e-processes (for example, it is easier to switch \citep{erven2012catching} to a different type of e-variable from one study to the next); sorting this out in detail remains a question for future work. 
\paragraph{Just-in-Time Conditional $\S$-variables: Optional Continuation}
As we can see from Proposition~\ref{prop:optional.stopping}, the stopped running product $E^{(\tau)}$ of a sequence of conditional $\S$-variables only evaluates each member variable $\eval_{(m)}$ after $m$ rounds, and only on the data ${\bf Y}_{(1)},\ldots,{\bf Y}_{(m)}$ that actually happened. It is therefore perfectly fine (for the Type-I error safety guaranteed by combining Propositions~\ref{prop:optional.stopping} and~\ref{prop:pval}) for us to construct $\eval_{(m)}$ on demand just before round $m$, as a function of all available information so far (including possibly both stochastically modeled and arbitrary variables), as long as we ensure the conditional safety property in Definition~\ref{def:cond.safe}.
This simple observation gives us tremendous flexibility for testing, much in contrast to traditional $\p$-values where the sampling plan needs to be fixed up front. In particular, it allows \emph{optional continuation}: the practice of deciding \emph{after} an initial series of experiments whether to output the current accumulated evidence, or perform yet more experiments.

Proposition~\ref{prop:optional.stopping} already indicates that we can safely engage in such optional continuation, assuming that we stop performing studies at some stopping time $\tau$ relative to the filtration $(\cF_{(m)})_{(m)}$. The following proposition expresses that we have Type-I error safety under optional continuation in an even stronger filtration-independent sense:  
\begin{corollary}\label{cor:villerobbins}{\bf (of Proposition~\ref{prop:optional.stopping}): ``Ville-Robbins''} 
	\begin{equation}\label{eq:villerobbins}
	\text{For all $P \in \cH_0$, all $0 < \alpha \leq 1$:}\ \ 
	P\left( 
	\text{there exists $m$ such that\ }  \eval^{(m)} \geq \frac{1}{\alpha} \right) \leq \alpha. 
	\end{equation}
\end{corollary}
\begin{proof} Proposition~\ref{prop:optional.stopping} expresses that $E^{(1)}, E^{(2)}, \ldots$ is a super-martingale with starting value $\leq 1$. Ville's inequality \citep{Ville39} (also known as Ville-Robbins or Ville-Robbins-Wald inequality) then implies (\ref{eq:villerobbins}). 
\end{proof}
For use later on, we formally  define a {\em threshold test based on non-negative process $(\eval^{(m)})_{m}$\/}  to be the random function that, when input $m$ and level $\alpha$, outputs \textsc{reject} if $\eval^{(m)} \geq 1/\alpha$ and outputs \textsc{accept} otherwise (in this general definition, $\eval^{(m)}$ can but does not need to be defined as a product of conditional $\S$-variables). We say that a threshold test
is {\em safe under optional continuation  with respect to Type I error\/} if (\ref{eq:villerobbins}) holds.
Thus, no matter when the data collecting and combination process is stopped, the Type-I error probability  is preserved. We relate optional continuation to the more common notion of `optional stopping' and discuss filtration-related subtleties in Section~\ref{sec:GROanalysis}.   

Whereas Ville-Robbins stresses that we may greedily `keep combining studies until we can reject or resources run out', it is just as important that our $\S$-values keep providing valid Type-I error guarantees if the continuation rule is externally imposed or unknowable.

\begin{ownexample}\label{ex:oc}{ \normalfont
	Consider the simple scenario with a single underlying data stream $Y_1, Y_2, \ldots$ with  $Y_i$ i.i.d.\ according to both  $\cH_0$ and $\cH_1$. Assume for simplicity simple $\cH_0 = \{P_0\}$  so that Bayes factors provide $\S$-variables. 
	For arbitrary prior $W$ on $\Theta_1$, define $e_{n,W}(Y^n) =
	p_W(Y^n)/p_0(Y^n)$ to be the Bayes factor as in (\ref{eq:bayesfactor}) with prior $W$ for
	$\Theta_1$ applied to data $Y^n$.
	
	Suppose we perform an initial study on sample $\Ydata_{(1)} \coloneqq Y^{N_{(1)}} \coloneqq (Y_1, \ldots,
	Y_{N_{(1)}})$ and we equip $\Theta_1$ with prior $W_{(1)}$. We can use as our $\S$-variable $\eval_{(1)}$ the Bayes factor 
	$\eval_{(1)} := e_{N_{(1)},W_{(1)}}(\bY_{(1)})$. 
	Suppose this leads to a first $\S$-value  $\eval_{(1)} =
	18$ --- promising enough for us to invest our resources into a
	subsequent study. We decide to gather $N_{(2)}$ data points leading to
	data $\Ydata_{(2)} = (Y_{N_{(1)}+1}, \ldots, Y_{N_{(1)}+ N_{(2)}})$. For this second data batch, we will use an $\S$-variable 
	$\eval_{(2)}  \coloneqq e_{N_{(2)},W_{(2)} }({\bf Y_{(2)}})$ 
	for a new prior $W_{(2)}$. Crucially, we are allowed to choose both $N_{(2)}$ and $W_{(2)}$  as a function of past data $\Ydata^{(1)}$: clearly, because the underlying data stream was assumed i.i.d., $\Exp_{P_0}[\eval_{(2)} \mid \bY_{(1)} ] \leq 1$ irrespective of our choice (here we use (\ref{eq:itisone})), and this allows us to use Proposition~\ref{prop:optional.stopping}.
	If we want to stick to the Bayesian paradigm, we can choose 
	$W_{(2)} \coloneqq {\priordist_{(1)}(\cdot \mid \Ydata_{(1)})}$, as  the Bayes posterior for $\theta_1$ based on data $\Yb_{(1)}$ and prior $W_{(1)}$.
	Bayes' theorem
	shows that multiplying $\eval^{(2)}\coloneqq \eval_{(1)} \cdot \eval_{(2)}$ (which gives a new $\S$-variable by Proposition~\ref{prop:optional.stopping}), satisfies
	\begin{equation}\label{eq:coherent}
	\eval^{(2)} = \eval_{(1)} \cdot \eval_{(2)} = \frac{{p}_{W_{(1)}}(\Yb_{(1)}) \cdot
		{p}_{W_{(1)}(\cdot \mid \Yb_{(1)})}(\Yb_{(2)})}{p_0(\Yb_{(1)})\cdot p_0(\Yb_{(2)})} =
	\frac{{p}_{W_{(1)}}(Y_1, \ldots, Y_{N_{(2)}})}{p_0(Y_{1}, \ldots, Y_{N_{(2)}})},
	\end{equation}
	which is exactly what one would get by Bayesian updating. This illustrates
	that, for simple $\cH_0$, combining $\S$-variables by multiplication can
	be done consistently with Bayesian updating.} 
\end{ownexample}

\paragraph{The Local Perspective} It might also be the case that it is not us who get the additional funding to obtain extra data, but rather some research
group at a different location.  If the question is, say, whether a
medication works, the null hypothesis would still be $\cH_0 = \{P_0\}$
but, if it works, its effectiveness might be slightly different due to
slight differences in population. In that case, the research group
might decide to use a different test statistic $\eval'_{(2)}$ which is
again a Bayes factor, but now with an alternative prior $\priordist$
on $\theta_1$ (for example, the original prior $W_{(1)}$ might be re-used
rather than replaced by $W_{(1)}(\cdot \mid \Yb_{(1)})$) --- one might call this the {\em local\/} perspective. Even though not standard Bayesian, $\eval_{(1)} \cdot \eval'_{(2)}$
still gives  a valid $\S$-variable, and Type-I error guarantees
are preserved --- and the same will hold even if the new
research group would use an entirely different prior on $\Theta_1$.
And, after the second batch of data
$\Ydata_{(2)}$, one might consider obtaining even
more samples, each time using a different $W_{(j)}$, that is always allowed to
depend on the past in arbitrary ways. 

Finally, it is important to note that, when combining studies, we do require all the data batches $\bY_{(1)}, \bY_{(2)}, \ldots$ to refer to separate data: obviously it is not allowed to `borrow' some data from $\bY_{(1)}$ and reuse it as part of $\bY_{(2)}$. E-values based partly on the same data can still be validly combined (e.g.\ by averaging \citep{VovkW21}) but not by multiplication as we do here.

\section{The GRO $\S$-Variable}\label{sec:GRO}
\paragraph{Mathematical Preliminaries} 
In this and the coming sections we present our main result, Theorem~\ref{thm:maina}. We first list all required mathematical notations and definitions. We invariably assume that some family $\{ P_{\theta}: \theta \in \Thetafull\}$ of probability distributions for $\cY$ has been fixed and  all $P_{\theta}$ with $\theta \in \Thetafull$ have  densities relative to some underlying measure $\mu$. When we write `$p$ is a (sub-) probability density', we mean it is a (sub-) probability density relative to $\mu$, i.e.\ $p \geq 0$ and  $\int p(\bY) d\mu = 1$ for a density and $\int p(\bY) d\mu \leq 1$ for a sub-density. In the latter case we call the measure $P$ with density $p$ a sub-probability distribution. We use $D(Q \| P)$ to denote the
Kullback-Leibler (KL) Divergence between distributions $Q$ and $P$ \citep{CoverT91}. We allow $P$ (but not $Q$) to be a sub-probability distribution, with $D(Q \|P) = {\bf E}_{\bY \sim Q}[\ln q(\bY)/p(\bY)]$. We say that random variables $U^*$ and $U^{\circ}$ are {\em essentially equal\/} if, for all
$\theta \in \Thetafull$, $P_{\theta}(U^* = U^{\circ}) = 1$.
We say that $U^*$ {\em essentially uniquely\/} satisfies property $\P$  if all other random variables satisfying property $\P$ are essentially equal to $U^*$. When we write `$P$ has full support', we mean that its density $p$ satisfies $p({\bf Y}) > 0$  $\mu$-almost everywhere.
We assume some suitable $\sigma$-algebra including all singleton sets on $\Thetafull$ has been defined, and for $\Thetasmall \subset \Thetafull$ we let $\priorset(\Thetasmall)$ be the set of all probability distributions
(i.e., `proper priors') on $\Thetasmall$ with this $\sigma$-algebra. Notably, $\priorset(\Thetasmall)$  includes, for each $\theta \in \Thetasmall$,
the degenerate distribution $W$ which puts all mass on $\theta$. We say that $W$ {\em essentially uniquely\/} satisfies property $\P$ among $\cW(\Thetasmall)$ if for all other distributions $W' \in \cW(\Thetasmall)$ that satisfy $\P$ and all $\theta \in \Thetafull$, we have $P_{\theta}(p_W = p'_W) = 1$, where $p_W$ and $p'_W$ are as in (\ref{eq:bayesmarginal}). 
$\cE(\Theta_0)$ is defined as the set of all $\S$-variables that can be defined on $\Ydata$ for $\Theta_0$, i.e.\ all random variables satisfying (\ref{eq:basic}).
We frequently use the fact
that if  $\Theta_0 = \{0 \}$ is a singleton so that $\cH_0$ is simple, then the class of $\S$-variables corresponds exactly to the set of likelihood ratios relative to $p_0$: 

\begin{align}\label{eq:simpleEisLR}
\cE(\{0 \}) = \left\{ 
\frac{q(\bY)}{p_0(\bY)}\; : 
\text{$q$ is a sub-probability density for $\bY$}\;
\right\}.
\end{align}
To see this, note that for every $\S$-variable $\eval = e({\bf Y)}$ we can define  $q({\bf Y}) := e ({\bf Y}) \cdot  p_0({\bf Y})$ and then  $\int q d \mu = \int p_0({\bf Y}) e({\bf Y}) d \mu \leq 1$; conversely every sub-density $q$ defines an $\S$-variable by setting $\eval = q({\bf Y})/p_0({\bf Y})$ which gives $\Exp_{P_0}[\eval] \leq 1$. 
 
Our main theorem (proof in Appendix~\ref{app:theorem1simpleproof}) implies that nontrivial $\S$-variables exist without any further conditions: 
\begin{theorem}\label{thm:maina}
	Suppose $Q$ is a probability distribution with full support and with density $q$, and assume  $\inf_{W_0 \in \priorset(\Theta_0)} D(Q  \| P_{W_0}) < \infty$.  Then
	there exists a (potentially sub-) distribution $P^*_0$ with density $p^*_0$ such that 
	\begin{equation}\label{eq:firsteval}
	\eval^{*} := \frac{q(\bY)}{p^*_0(\bY)}
	\end{equation} 

	is an $\S$-variable. Moreover, $\eval^*$ satisfies, essentially uniquely,  
	\begin{align}\label{eq:firstgro}
	\sup_{\eval\in \cE(\Theta_0)}  \Exp_{\Ydata \sim Q}[\log \eval]
	=  
	\Exp_{\Ydata \sim Q}[\log \eval^*] =
	\inf_{W_0 \in \priorset(\Theta_0)}  D(Q \| P_{W_0})
	= D(Q \| P^*_0).  
	\end{align} 
	%
	If the minimum is achieved by some ${W}^*_0$, i.e.\ $D(Q \| P^*_0) = D(Q \|
	P_{W^*_0})$, then $P^*_0 = P_{W^*_0}$. 
\end{theorem}
The full support condition is natural and discussed further in Appendix~\ref{app:regularityconditions}. Following Barron and \cite{Li99} (see also \citep{csiszar2003information}), we call $P^*_0$  the {\em Reverse Information Projection (RIPr)\/} of $Q$ on ${\cal P}(\Theta_0)= \{P_{W_0}: W_0 \in \cW(\Theta_0) \}$. In all examples in this paper, we have $P^*_0 = P_{W^*_0}$: the minimum is achieved and its density integrates to $1$ (one can construct special $\cH_0$ for which $p^*_0$ integrates to strictly less than $1$ \citep{Harremoes21}, but we do not know whether this happens for any practically relevant $\cH_0$). The following corollary (see Appendix~\ref{app:theorem1simpleproof} for details) is useful in applications:  
\begin{corollary}\label{cor:justone}
	$\eval^*$ is the only  $\S$-variable of Bayes factor/likelihood ratio form  with $q$ in the numerator. That is, for all $W_0 \in \cW(\Theta_0)$: if $P_{W_0}$ is not essentially equal to $P^*_0$ then $q(\Ydata)/p_{W_0}(\Ydata)$ is not an $\S$-variable. In particular this implies: (a)  if $P^*_0 = P_{W^*_0}$, then $W^*_0$ achieves $\min_{W_0 \in \cW(\Theta_0)} D(Q \| P_{W_0})$ essentially uniquely; and (b) if we have found an $\S$-variable of form $q(\Ydata)/p_{W_0}(\Ydata)$ then  $W_0$ must be essentially equal to $W^*_0$. 
\end{corollary}
Theorem~\ref{thm:maina} leaves open the question of how to calculate $W^*_0$, if it exists. In the examples we encounter below, we can either show that $W^*_0$ is degenerate, putting all its mass on a single distribution $P_{\theta^*_0}$, and $\theta^*_0$ can be determined analytically, or, as in the t-test example, we can analytically find it by other means. `Easy' $W^*_0$ occur  in surprisingly many other situations as well (see e.g.\ \citep{TerschurePLG21,KoolenG21}), but by no means always \citep{Adams20}. More generally, even if $W^*_0$ is not easy to determine analytically, as long as $\cY$ is finite then, using
Carath\'eodory's theorem we can still show that $W^*_0$ must exist and has finite support. By strict convexity of KL divergence in its second argument it can therefore in principle be found by numerical methods, but more research is needed to see existing methods are fast enough in practice. If $\cY$ is infinite, one can still try to approximate $W^*_0$ numerically but it may be hard to determine the accuracy of such approximations. 
\subsection{The GRO criterion when \texorpdfstring{$\mathcal{H}_1$}{the alternative hypothesis} is simple}
\label{sec:grosimple}
We now focus on the case with a given alternative $\cH_1 = \{P_{\theta}: \theta \in \Theta_1\}$, and for now assume $\Theta_1 = \{\theta_1\}$ is a singleton. 
Applying Theorem~\ref{thm:maina} above with $Q= P_{\theta_1}$, we call the resulting $\eval^*$ (or any essentially equal version of it) the $\theta_1$-GRO $\S$-variable, GRO standing for {\em growth-rate optimal}. We define the {\em growth rate achievable with $\theta_1$\/} as  
\begin{equation}\label{eq:grodef}
\gro(\theta_1) :=  
\sup_{\eval \in \cE(\Theta_0)} {\bf E}_{Y \sim P_{\theta_1}} [ \log \eval] = D(P_{\theta_1} \| P^*_0),
\end{equation}
\noindent 
with the equality following from Theorem~\ref{thm:maina}
(we omit $\Theta_0$ in the notations since, in contrast to $\Theta_1$ or $\theta_1$, $\Theta_0$ will always be clear from context). 
In general, there exist many nontrivial $\S$-variables for a given $\cH_0$. The $\theta_1$-GRO $\S$-variable is a  special one that is optimal in a natural sense for the given $\cH_1$: whereas in the Neyman-Pearson paradigm, one measures the quality of a test at a given significance level $\alpha$ by its
power, i.e.\ the
probability of correct decision under $\theta_1$, we will measure it by the  {\em expected capital growth rate\/} under $\theta_1$. This is different from power, yet there are close connections to which we return in 
Section~\ref{sec:theory}. 

To explain, we return to the monetary interpretation of $\S$-values.  The
definition of $\S$-variable ensures that we expect them to stay under
$1$ (one does not gain money) under any $P \in \cH_0$.
Analogously, one would like them to be constructed such that they can
be expected to grow large as fast as possible (one gets rich, gets
evidence against $\cH_0$) under $\cH_1$. 
Assuming for now that $\cH_1 = \{P_{\theta_1}\}$ is simple, this suggests to define the optimal $\S$-variable $\eval^*$ as the one that maximises $\Exp_{P_{\theta_1}}[f(\eval^*)]$ for some function that is increasing in $\eval^*$. At first sight it may seem natural to pick $f$ the identity,
but this can lead to adoption of an $\S$-variable $\eval^*$ such that
$P_{\theta_1}(\eval^* = 0) > 0$. This choice, however, does not go together well with preserving evidence (capital) under optional continuation: if $\eval^*_{(1)}$ is $0$ with positive probability, then it may happen that the evidence $\eval^{(m)} = \prod_{j=1}^m \eval_{(j)}$ obtained so far remains $0$, no matter how large $\eval_{(j)}$ for $j \geq 1$ --- akin to losing all one's money in the first round at a roulette table. A similar objection applies to any polynomial $f$, but it does not apply to the logarithm,
which is also the asymptotically optimal choice for $f$ if samples are independent: by Kolmogorov's strong law of large numbers, any
sequence of $\S$-variables $\eval_{(1)}, \eval_{(2)}, \ldots$ based on independent $\Ydata_{(1)}, \Ydata_{(2)},
\ldots$ with  
$\sup_{j \in \naturals} \Exp_{\Ydata_{(j)} \sim P_{\theta_1}}[(\log \eval_{(j)})^2] / (j / \log^2 (j+1))  < \infty$ (in particular this holds if the variances are uniformly bounded), will
a.s.\ satisfy $
(1/m) \sum _{j=1}^m \left(\log \eval_{(j)}\right) - (1/m) \sum_{j=1}^m   \Exp_{\Ydata_{(j)} \sim P_{\theta_1}}[\log \eval_{(j)}]  \rightarrow 0$. Thus, 
$\eval^{( m)} \coloneqq \prod_{j=1}^m \eval_{(j)}$ will grow exponentially fast if all $\Exp_{\Ydata_{(j)} \sim P_{\theta_1}}[\log \eval_{(j)}]  > 0$, with maximal growth rate attained if the $\eval_{(j)}$ are chosen to maximize $\Exp_{\Ydata_{(j)} \sim P_{\theta_1}}[\log \eval_{(j)}]$ --- a quantity which, for $\log$ taken to the base $2$, is known as the \emph{doubling rate} \citep{CoverT91,Kelly56}. This provides a powerful reason for choosing the logarithm; see also the extensive exposition by 
\cite{Shafer19}. We return to GRO's motivation in Section~\ref{sec:GROdiscussion}. 

\begin{ownexample}{\bf\ [$2 \times 2$ Contingency Tables]}\label{ex:2x2}{ \normalfont Let $\cY^n = \{0,1\}^n$  and let $\cX = \{a,b\}$ 
	represent two categories. We start with a multinomial model $\cG$ on $
	\cZ = \cX \times \cY$, extended to $n$ outcomes by independence. We want 
	to test whether the $Y_i$ are dependent on the $X_i$. To this end, 
	we condition every distribution in $\cG$ on a fixed, given,
	$X^n = \bx$ with $\bx = (x_1, \ldots, x_n)$, and we let $\cH_1$ be the set of (or a subset of the) conditional 
	distributions on $\cZ$ that thus result. 
	We thus assume the design 
	of $\cX^n$ to be set in advance, but $N_1$, the number of ones, to 
	be random; alternative choices are possible and would lead to a 
	different analysis. Conditioned on $X^n= \bx$, the likelihood of an individual sequence $\by \mid \bx$ 
	becomes:
	\begin{align}\label{eq:apple}
	p_{\mu_{1|a}, \mu_{1|b}}(\by \mid \bx) = 
	 \mu_{1|a}^{n_{a1}} (1-\mu_{1|a})^{n_{a0}} \cdot \mu_{1|b}^{n_{b1}} (1-\mu_{1|b})^{n_{b0}},
	\end{align}
%
	where $n_{ji}$ is the number of times outcome $i$ was observed to fall in category $j$ and $\mu_{1|j}$ is the probability of observing a $1$ given category $j$.
	These densities define the full model $\{ P_{\mu_{1|a},
		\mu_{1|b}} : (\mu_{1|a}, \mu_{1|b}) \in \Thetafull \}$ with $\Thetafull
	= [0,1]^2$.  $\cH_0$, the null model, simply has $ (X_1, \ldots,
	X_n)$ and $\Ydata = (Y_1,\ldots, Y_n)$ independent, with $Y_i, \ldots, Y_n$
	i.i.d.\ $\text{Ber} (\mu)$ distributed, $\mu \in \Theta_0 \coloneqq
	[0,1]$, i.e.\ $p_{\mu}(\by \mid \bx) = p_{\mu}(\by) =
	\mu^{n_1}(1-\mu)^{n-n_1}
	$  with $n_{1} = n_{a1} + n_{b1}$. 
	We defer description of the test of  the full alternative $\{P_{\theta}: \theta \in \Theta_1 \}$ with $\Theta_1 =(0,1)^2$ against $\cH_0$ to Section~\ref{sec:noeffectsize}. For now, we assume a simple $\cH_1 = 
	\{P_{\theta_1 } \}$ for a specific 
	$\theta_1 = (\mu_{1|a},\mu_{1|b})$ with $\mu_{1|a} \neq \mu_{1|b}$. 
	\cite{TurnerLG21} shows that the RIPr for $P_{\theta_1}$, achieving the infimum in (\ref{eq:firstgro}) is given by $P^*_0 = P_{W^*_0}$ where $W^*_0$ is the degenerate prior that puts all its mass on the single point 
	$\mu^{\circ}  = (n_a \mu_{1|a} + n_b \mu_{1|b})/(n_a+ n_b)$. 
	Thus, the  $\theta_1$-GRO $\S$-variable has an intuitive form here, being  given by 
	\begin{align}
	\label{eq:eval2x2}
	\eval^* = \frac{p_{\mu_{1|a},\mu_{1|b}}({\bf Y} \mid {\bf x})}{p_{\mu^{\circ}}( {\bf Y} \mid {\bf x})}.
	\end{align}
%
	The fact that the  RIPr is achieved by a point prior is quite specific to contingency tables. 
	We also note that, while the expectation of  $\eval^*$ is bounded by $1$ under  all $P_\mu \in \cH_0$, its actual distribution function varies with $P_{\mu}$. This is in contrast to the t-test example, in which the GRO $\eval^*$ turns out to have the same distribution under all distributions in the null. 
}\end{ownexample}

\subsection{GRO when prior on \texorpdfstring{$\mathcal{H}_1$}{the alternative hypothesis} is available}
\label{sec:BayesGRO}
We now take a Bayesian stance regarding $\cH_1$ and, conditioned
on $\cH_1$, are prepared to represent our uncertainty by prior
distribution $W_1$ on $\Theta_1$.
The marginal distribution of $\bf Y$ is then $P_{W_1}(\bf Y)$. 
Applying Theorem~\ref{thm:maina} with $P_{W_1}$ as $Q$ then leads to the ``$W_1$-GRO $\S$-variable'' --- it would be optimal in the GRO sense under prior $W_1$. This $\S$-variable  is a Bayes factor, but gives only a quasi-Bayesian notion of evidence since  any prior $W_1$ on $\cH_1$ that we wish to adopt forces us to adopt a particular corresponding prior
$W^*_0 \in \cH_0$. 
One may perhaps consider this a small price to pay for creating a Bayes
factor that, by its Type-I error safety under optional continuation, should be acceptable to frequentists as well. Moreover it is often recognised that 
priors on $\Theta_0$ and $\Theta_1$ should somehow be `matched' to each
other \citep{berger1998bayes}; we may view the RIPr construction as
providing a reasonable (from a frequentist stance) matching.

\begin{ownexample}{\bf [Gaussian Location with Gaussian prior (z-test)]}\label{ex:normal}{ \normalfont
	Now consider  $\cH_1$ according to which the $Y_i$ are i.i.d.\ $\sim N(\mu,1)$ for some $\mu \in \Theta_1 = \reals$, so that  $p_{\mu}(\Ydata) = p_{\mu}(Y_1, \ldots, Y_n)  \propto \exp(-\sum_{i=1}^n (Y_i - \mu)^2/2)$. We let $\cH_0= \{P_0 \}$. 
	We perform a Bayes factor test using $\eval := p_W(\bY)/p_0(\bY)$ where
	we
	take the prior $W$  to have Gaussian density
	$w(\mu) \propto \exp(-\mu^2/2)$. 
	By \eqref{eq:itisone} we know that $\eval$ is not just a Bayes factor but also an  $\S$-variable.
	By straightforward calculation:
	\begin{align*}
	-\frac{1}{2} \log (n+1) +
	\frac{1}{2} (n+1) \cdot  \breve{\mu}_n^2,
 \end{align*}

	where $\breve\mu_n = (\sum_{i=1} Y_i)/(n+1)$ is the Bayes MAP estimator, which only differs from the ML estimator by $O(1/n^2)$: $\breve\mu_n -  \hat\mu_n  = \hat\mu_n/(n(n+1))$. If we were to reject $\Theta_0$ when
	$\eval \geq 20$ (giving, by Proposition~\ref{prop:pval} a Type-I error guarantee of $0.05$), we would thus 
	reject if
	\begin{equation}\label{eq:bayesthreshold}
	|\breve\mu_n| \geq \sqrt{\frac{5.99 + \log (n+1)}{n+1}}
	,\text{\ i.e.\ } |\hat\mu_n| \succeq \sqrt{(\log n)/n},
	\end{equation}
	where we used
	$2 \log 20 \approx 5.99$. Contrast this with the
	standard two-sided Neyman-Pearson (NP) test, which would reject (with $\alpha = 0.05$)
	if $|\hat\mu_n| \geq 1.96/\sqrt{n}$, or the one-sided test which would reject if $\hat\mu_n \geq 1.645/\sqrt{n}$ or the $\S$-value based tests of the next section: the standard Bayesian test is significantly more conservative, requiring more data to conclude rejection. In Section~\ref{sec:competitive} we investigate this further.
}\end{ownexample}

\section{The GROW $\S$-Variable}\label{sec:grow}
We now show how to construct good $\S$-variables if $\cH_1$ is composite and no prior on $\Theta_1$ is available. We focus on  variations of worst-case (maximin) growth optimality, but this is certainly not the only criterion that might be useful or valuable; see the discussion in Section~\ref{sec:GROdiscussion}.
In the case of simple $\cH_1 = \{P_{\theta_1}\}$, we aimed for $\S$-variables that could be expected to grow as fast as possible under $P_{\theta_1}$. 
Analogously, we would now like them to be constructed such that they can
be expected to grow large as fast as possible (one gets rich, gets
evidence against $\cH_0$) under {\em all\ } $P_1 \in \cH_1$. We call 
$\S$-variables satisfying this property GROW: {\em
	growth-rate optimal in worst-case}. We now discuss  the simplest, `raw' form of this criterion --- in some settings a modification of this criterion, REGROW, which we discuss in the next section, is more suitable.  
GROW  tells us to pick,
among all $\S$-variables relative to $\cH_0$, the  $\eval^*$ that achieves the {\em worst-case optimal expected capital growth rate}
\begin{equation}\label{eq:growdef}
\grow(\Theta_1) := 
\sup_{\eval: \eval \in \cE(\Theta_0)}
\inf_{\theta \in \Theta_1} \Exp_{P_{\theta}} \left[ \log \eval \right].
\end{equation}

\noindent 
{\bf Theorem~\ref{thm:maina}, First Generalisation} \textit{ Suppose all $P_{\theta}, P_{\theta'}$ with $\theta,\theta' \in \Theta_1$ satisfy $D(P_{\theta} \| P_{\theta'}) < \infty$, and have full support. If $
	\inf_{W_1 \in \priorset_{1}, W_0 \in \cW(\Theta_0)}
	D(P_{W_1} \| P_{W_0}) =  \inf_{W_0 \in \cW(\Theta_0)} D(P_{W^*_1} \|
	P_{W_0})   < \infty,$ (i.e.\ the minimum on the left over $\cW_1$ is achieved by $W^*_1$) 
	then
	then there exists an $\S$-variable 
	\begin{equation}
	\eval^*  := \frac{p_{W^*_1}(\bY)}{p^{*}_0(\bY)},
	\end{equation}	
	where $p_0^*$ is the density of $P^{*}_0$, a (potentially sub-) distribution satisfying $\inf_{W_0 \in \cW(\Theta_0)} D(P_{W^*_1} \|
	P_{W_0})=  D(P_{W_1^*} \| P^*_0)$, and $\eval^*$ achieves (\ref{eq:growdef}), satisfying, essentially uniquely:
	$
	\inf_{\theta \in \Theta_1}  \Exp_{\Ydata \sim P_\theta}[\log \eval^*]  = 
	\sup_{\eval\in \cE(\Theta_0)} \inf_{\theta \in \Theta_1} \Exp_{\Ydata \sim P_\theta}[\log \eval] =
	D(P_{W^*_1} \| P^{*}_0).
	$
	If further $D(P_{W_1^*} \| P^*_0) = D(P_{W_1^*} \| P_{W_0^*})$ for some $W_0^* \in \cW(\Theta_0)$, then $P^*_0 = P_{W_0^*}$.} 
\\ \ \\ \noindent
The earlier version of Theorem~\ref{thm:maina} is the special case we get if we set $\Theta_1 =\{\theta_1\}$ 
a singleton and $Q:= P_{\theta_1}$. 
This generalized version expresses that the GROW $\S$-variable is once again  a
Bayes factor --- a special one in fact, between the components of the {\em joint information projection\/} $(P_{W_1^*}, P^*_0)$  \citep{CsiszarT84}; see Figure~\ref{fig:jipr}.
As to computing $W^*_1$ in practice, the same remarks apply as we already made (underneath Corollary~\ref{cor:justone}) regarding computing $W^*_0$. 
\vspace*{-.3em}   
\begin{figure} 
	\center 
	\scalebox{0.5}{  \begin{tikzpicture}
\draw[rotate=0, line width=0.5mm] (0,0) ellipse (200pt and 100pt);
\draw[rotate=10, line width=0.5mm, blue] (-4, 0.2) ellipse (40pt and 60pt);
\node[draw,fill=blue,circle,text=white, minimum size=0.2cm, label=-45:{\huge $P_{W^*_0}$}] (A) at ($(-4.6,-0.4)+(0:2 and 1)$) {
};
\draw[rotate=0, line width=0.5mm, red] (3, 0) ellipse (80pt and 70pt);
\node[draw,fill=red,circle,text=black, minimum size=0.2cm, label=50:{\huge
  $P^*_{W_1}$}] (B) at ($(2.2,0)+(180:2 and 1)$) {
};
\draw[line width=0.5mm, dashed] (A) -- (B);
\node[text width=2cm] at ($(-0.5, 2.7)$) {\huge $\bm{\mathcal{P}\left(\Theta_1\right)}$};
\node[text width=2cm, red] at ($(3.5, 0.2)$) {\huge $\bm{\mathcal{P}\left(\Theta\left(\delta\right)\right)}$};
\node[text width=2cm, blue] at ($(-4.2, -0.2)$) {\huge $\bm{\mathcal{P}\left(\Theta_0\right)}$};
\end{tikzpicture}}
	\caption{\label{fig:jipr} Joint Information Projection (JIPr). $\Theta_0, \Theta_1$ represent non-overlapping models, ${\cal W}(\Thetasmall)$ is the set of all priors over $\Thetasmall$, and $\cP(\Thetasmall) = \{P_{W}: W \in {\cal W}(\Thetasmall) \}$. Theorem~\ref{thm:maina} implies that the GROW $\S$-variable between $\Theta_0$ and $\Theta_1$ is given by $p_{W^*_1}/p_{W^*_0}$, the Bayes factor between the two Bayes marginals that minimise KL divergence $D(P_{W_1} \| P_{W_0})$, assuming the minima are achieved.}
\end{figure}

\subsection{One-parameter models with minimum relevant effect size}\label{sec:effectsize}
Let $\Thetafull$ be a connected subset of $\reals$ indexing a 1-parameter parametric model $\{P_{\theta}: \theta \in \Thetafull\}$ with $\theta$ indicating the size of some effect.
If, as is standard practice in e.g.\ medical statistics, 
we have a {\em minimum clinically relevant effect size\/} $\delta^+$ and a status quo $\delta^- < \delta^+$ in mind, we want to test 
\begin{equation}\label{eq:newthetas}
\Theta_0 =  \{\theta \in \Thetafull: \theta \leq \delta^{-} \} \text{\ vs.\ }
\Theta_1 = \{\theta \in \Thetafull: \theta \geq \delta^+ \}.
\end{equation}
In standard cases, often $\delta^-=0$ and $\Theta_0 := \{0 \}$.
\begin{proposition}\label{prop:dominance}
	Suppose there exists a 1-dimensional statistic  $T = t({\bf Y})$ such that the family of densities $\{p_{\theta}: \theta \in \Theta\}$ has a monotone likelihood ratio in $T$. 
	Then for all $\delta^- < \delta^+$ with $\delta^-,\delta^+ \in \Thetafull$,  the GROW $\S$-variable  relative to $\Theta_1$ and $\Theta_0$ as in (\ref{eq:newthetas}),  
	is given by $\eval^* = p_{\delta^+}({\bf Y})/p_{\delta^-}({\bf Y})$:
	it sets $W^*_1$ and $W^*_0$ 
	to be degenerate priors, putting all mass on $\delta^+$ and $\delta^-$, respectively.
\end{proposition}
We now illustrate Proposition~\ref{prop:dominance} for 1-dimensional exponential families, but stress that it can be applied to some other families (e.g.\ location families or the t-test setting in Section~\ref{sec:fulltheorem}) as well.  
\begin{ownexample}{\bf \ [GROW for 1-dimensional exponential families]}\label{ex:ponential}
{ \normalfont 	Let $\{ P_{\theta} \mid \theta \in \Thetafull\}$  represent a 1-parameter exponential family for sample space
	$\cY$, given in its mean-value
	parameterisation, where $\Thetafull$ is a connected subset of (and possibly equal to) the full mean-value parameter space.  
	Let $\delta^- < \delta^+$ with $\delta^-, \delta^+$ both in $\Thetafull$.  Both $\cH_0 = \{P_{\theta} :\theta \in \Theta_0 \}$ and
	$\cH_1 = \{ P_{\theta}: \theta \in \Theta_1 \}$ with $\Theta_0, \Theta_1$ as in (\ref{eq:newthetas}) are extended to outcomes in $\Ydata = (Y_1, \ldots, Y_n)$ by independence.
	Let $T = t({\bf Y})$ be the sufficient statistic of the exponential family under consideration, i.e.\ $\Exp_{{\bf Y} \sim P_{\theta}} [t({\bf Y})] = \theta$. 
	It is well-known that the monotone likelihood property holds in the statistic $T$.
	It thus follows from Proposition~\ref{prop:dominance} above 
	that the GROW
	$\S$-variable relative to $\Theta_1$ and $\Theta_0$  can be calculated as a likelihood ratio
	$\eval^* = p_{\delta^+}(\Ydata)/p_{\delta^-}(\Ydata)$ between two point hypotheses, even though
	$\Theta_1$ and/or $\Theta_0$ may be composite.
	Comparison of the ensuing test to the Neyman-Pearson and Bayes factor tests are given in Section~\ref{sec:competitive}.}
\end{ownexample}

\section{The REGROW $\S$-variable: general composite \texorpdfstring{$\mathcal{H}_1$}{alternative hypothesis} case}\label{sec:regrow}
\noindent {\bf Theorem~\ref{thm:maina}, Further Generalisation}
\textit{ Let $f(\theta)$ be a function that is bounded on $\Theta_1$; we abbreviate $f(W) := \Exp_{\theta \sim W} [f(\theta)]$. Suppose all $P_{\theta}, P_{\theta'}$ with $\theta,\theta' \in \Theta_1$ satisfy $D(P_{\theta} \| P_{\theta'}) < \infty$, and have full support.  If $
	\inf_{W_1 \in \cW(\Theta_1), W_0 \in \cW(\Theta_0)}
	(D(P_{W_1} \| P_{W_0}) - f(W_1) )=  \inf_{W_0 \in \cW(\Theta_0)} D(P_{W^*_1} \|
	P_{W_0}) - f(W^*_1)  < \infty$ 
	then there exists an $\S$-variable $\eval^f$ given by 
	\begin{equation}\label{eq:regroweval}
	\eval^f := \frac{p_{W^*_1}(\bY)}{p^*_0(\bY)}
	\end{equation}
	where $p^*_0$ is the density of $P^*_0$, a (potentially sub-) distribution such that $\inf_{W_0 \in \cW(\Theta_0)} D(P_{W^*_1} \|
	P_{W_0})=  D(P_{W_1^*} \| P^*_0)$, and $\eval^f$ satisfies, essentially uniquely:
	\begin{equation}\label{eq:jiprb}
	\inf_{\theta \in \Theta_1} (\ \Exp_{\Ydata \sim P_\theta}[\log \eval^f] - f(\theta) 
	\ ) =
	\sup_{\eval\in \cE(\Theta_0)} \inf_{\theta \in \Theta_1} (\ \Exp_{\Ydata \sim P_\theta}[\log \eval] - f(\theta) \ )
	=  
	D(P_{W^*_1} \| P^*_0)  - f(W^*_1).
	\end{equation}  
	If further $D(P_{W_1^*} \| P^*_0) = D(P_{W_1^*} \| P_{W_0^*})$ for some $W_0^* \in \cW(\Theta_0)$, then $P^*_0 = P_{W_0^*}$.} \\ \ \\ \noindent
We call $\eval^f$ the REGROW (standing for {\em relative\/} growth in the worst-case) $\S$-variable relative to {\em offset\/}  $f$. The previous version of Theorem~\ref{thm:maina} is  the special case with $f$ constant. 
The offset $f$ will be useful when $\Theta_0$ and $\Theta_1$ are nested and no effect size can be stated in advance  (Section~\ref{sec:noeffectsize}) and/or when nuisance parameters are present (Section~\ref{sec:nuisance} and~\ref{sec:fulltheorem}). All these cases can be handled essentially the same way (and we may in fact think of the case of nested models as a situation in which {\em all\/} parameters in $\Theta_0$ are viewed as nuisance): we first consider a modified problem in which 
$\Theta_1$ is reduced to a singleton. That is, we imagine that  some oracle tells us ``if $\cH_1$ is the case, then the data are sampled from this specific $\theta^*_1$''. 
We then consider the corresponding  $\gro(\theta^*_1) = \grow(
\{\theta^*_1\})$ and view this as the desirable but unobtainable expected growth rate --- the one we could have obtained if we had known $\theta^*_1$.  
We may now aim for the $\S$-variable such that,  no matter what $\theta^*_1$ turns out to be, our expected growth is close to the optimum we could have obtained had we known $\theta^*_1$. 
Thus, we want to be worst-case growth optimal {\em relative\/} to $f(\theta_1) := \gro(\theta_1) = 
\Exp_{P_{\theta_1}}[\log E^*_{\theta_1}] = 
\inf_{W_0 \in \cW(\Theta_0)} D(P_{\theta_1} \| P_{W_0})$ (where we write $\eval^*_{\theta_1}$ for the GRO $\S$-variable for testing $\{\theta_1\}$ vs. $\Theta_0$ and the second equality follows by (\ref{eq:grodef})). Plugging in this $f$  and taking negatives on both sides, (\ref{eq:jiprb}) now becomes:
\begin{multline}\label{eq:jipre}
\sup_{\theta_1 \in \Theta_1} \Exp_{\Ydata \sim P_{\theta_1}}\left[\log \eval^*_{\theta_1} - \log \eval^{f}
\right]= 
\inf_{\eval\in \cE(\Theta_0)} \sup_{\theta_1  \in \Theta_1} \Exp_{\Ydata \sim P_{\theta_1}}[\log \eval^*_{\theta_1} - \log \eval]
= \\  \Exp_{\theta_1 \sim W^*_1} \left[
\inf_{W_0 \in \cW(\Theta_0)} D(P_{\theta_1} \| P_{W_0} ) 
\right]
- D(P_{W^*_1} \| P^*_{0}),
\end{multline} 

\noindent 
an expression that is always nonnegative, since, by definition of $\eval^*_{\theta_1}$, for any $\S$-variable $E$, 
$\Exp_{\Ydata \sim P_{\theta_1}}[\log \eval^*_{\theta_1}] \geq  \Exp_{\Ydata \sim P_{\theta_1}}[\log \eval]$. This shows that $\eval^f$ can be thought of as a {\em minimax pseudo-regret\/} $\S$-variable, regret being the loss of expected capital growth under $\cH_1$ due to not knowing the underlying $\theta_1$ in advance.

\subsection{Composite \texorpdfstring{$\mathcal{H}_1$}{alternative hypothesis}, no effect size known}\label{sec:noeffectsize}
Suppose we are interested in detecting whether there is any deviation at all from the null. There is no pre-stated effect size, and $\Theta_0 \subset \Theta_1 = \Thetafull$ are nested, or more generally, for all $\theta_1\in \Theta_1$, $\inf_{\theta_0 \in \Theta_0} D(P_{\theta_1}\| P_{\theta_0}) = 0$.
In this case, $\grow(\Theta_1) = 0$ and the GROW $\S$-variable that achieves it is equal to $\eval^*=1$, which will never give any evidence against $\cH_0$, so clearly, the raw GROW approach is not useful. Instead, in this setting, the REGROW approach is a sensible generalisation of successful existing approaches. We first establish this for simple nulls:
\paragraph{Simple Nulls} If $\Theta_0 = \{0 \}$ is simple,  we have $\inf_{W_0} D(P_{\theta_1} \| P_{W_0}) = D(P_{\theta_1} \| P_{0} )$ and 
$D(P_{W_1^*} \| P_{W_0^*}) = D(P_{W_1^*} \| P_{0})$,
and terms in (\ref{eq:jipre}) involving $- \log p_0(\bY)$ cancel. 
Further using the 1-to-1 mapping (\ref{eq:simpleEisLR}) 
between probability densities and $\S$-variables for the case of point $0$'s to 
rewrite  (\ref{eq:jipre}) and using $\eval^f  = 
p_{W^*_1}({\bf Y})/p_0({\bf Y})$, (\ref{eq:jipre}) simplifies to:  
\begin{align}\label{eq:jiprc}
\sup_{\theta \in \Theta_1}  \Exp_{\Ydata \sim P_\theta}\left[- \log \frac{p_{W^*_1}(\bY)}{p_\theta(\bY)} \right] 
=
\inf_{q} \sup_{\theta \in \Theta_1} \Exp_{\Ydata \sim P_\theta}\left[- \log \frac{q(\bY)}{p_\theta(\bY)}\right] 
= \sup_{W_1 \in \cW(\Theta_1)} \  \Exp_{\theta \sim W_1}
\left[ D(P_{\theta} \| P_{W_1}) \right],
\end{align}

\noindent 
where the infimum is over all sub-probability densities $q$ over $\bY$. 
(\ref{eq:jiprc}) is just the redundancy-capacity theorem \citep{CoverT91} of information theory, and it has a data-compression interpretation.
In a nutshell, for any $\S$-variable of the form $p_{W_1}(\bY)/p_0(\bY)$, the $\log$ evidence 
$\log p_{W_1}(\bY)/p_0(\bY)$ is thought of as a difference between the code length needed to code the data using two lossless codes, one with lengths $-\log p_{W_1}$, associated with $\cH_1$, and one with lengths $- \log p_0$, associated with $\cH_0$. (\ref{eq:jiprc}) expresses that when choosing $W_1 = W^*_1$, one associates $\cH_1$ with the code that minimises worst-case redundancy (the additional expected number of bits needed compared to an encoder that knows $\theta^*_1$). 
This is in accordance with the {\em MDL\/} (Minimum Description Length) Principle, in which  code length difference between the same two codes is used to measure evidence \citep{BarronRY98,GrunwaldR20}.

\begin{ownexample}{\bf\ [Exponential Families with a point null: Jeffreys' Prior on $\Theta_1$]}{ \normalfont \label{ex:jeffreys}
	To make this more concrete, let $\{P_{\theta}: \theta \in \Thetafull\}$ represent a $d$-dimensional exponential family given in either the mean or the canonical parameterisation. 
	We restrict the parameter set to $\Theta_1$ that is a compact subset of the interior of $\Thetafull$ and let $\Theta_0$ be a singleton subset in the interior of $\Theta_1$. By standard properties of exponential families, the finite KL condition of Theorem~\ref{thm:maina} applies  and the problem reduces to finding the prior $W^*_1$ on $\Theta_1$ that satisfies (\ref{eq:jiprc}). 
	\citep{ClarkeB94} showed that, for large $n$, this prior converges in an $L_1$-sense to Jeffreys' prior (`least favourable under entropy loss') , which is the main reason for adopting it in MDL model selection. They also showed that (\ref{eq:jiprc}) and hence (\ref{eq:jipre}) is of size $(d/2) \log n + O(1)$.
	Thus, for point nulls and suitably truncated parameter spaces, this approach is consistent with the MDL Principle and with objective Bayes approaches based on Jeffreys prior.} 
\end{ownexample}
\begin{ownexample}{\bf\ [$2 \times 2$ Tables, Continued]}
	\label{ex:2x2c} { \normalfont If $\Theta_0$ is not a singleton  
	then the simplification of (\ref{eq:jipre}) to (\ref{eq:jiprc}) is not possible,  and numerical simulation can be used
	to determine (\ref{eq:jipre}) and the priors appearing therein. Consider for example the $2 \times 2$ model, but now with unrestricted $\Theta_1 = (0,1)^2$. This does satisfy the regularity conditions needed for Theorem~\ref{thm:maina} to be applicable (see Appendix~\ref{app:regularityconditions}), but it has $\Theta_1$  2-dimensional and $\Theta_0$ 1-dimensional. We saw in the previous example that for 1-vs. 0-dimensional exponential family models, (\ref{eq:jipre}) would take on value $ (1/2) \log n + O(1)$, which suggests that it is the same here, for $2$- vs. $1$-dimensional. This is confirmed by numerical simulations \citep{TurnerLG21}.}
\end{ownexample}

\subsection{Composite \texorpdfstring{$\mathcal{H}_1$}{alternative hypothesis},  nuisance parameters present}\label{sec:nuisance}
We now consider the common situation of models that can be parameterised by $\Thetafull
=\{(\delta,\gamma): \delta \in \Delta, \gamma \in \Gamma\}$ where
$\delta$ is a single parameter of interest (for simplicity taken to be  a scalar) and $\gamma$ represents a nuisance parameter (scalar or vector).
As in Section~\ref{sec:effectsize}, we  want to test whether $\delta \geq \delta^+$ or $\delta \leq \delta^-$ for some $\delta^- < \delta^+$.
We thus let 
\begin{align}\label{eq:nuisanceH0H1}
\Theta_0 = \{(\delta,\gamma): \delta \leq \delta^-, \gamma \in \Gamma \}, \ \ \text{vs.}\ \  \Theta_1 = \{ (\delta,\gamma): \delta \geq \delta^+,\gamma \in \Gamma\}. \vspace{0em}
\end{align}
We will first consider the simplified problem in which we test $\Theta_{0,\delta^-} := \{(\delta^-,\gamma) : \gamma \in \Gamma \}$  vs. $\Theta_{1,\delta^+} := \{(\delta^+,\gamma) : \gamma \in \Gamma \}$, and later extend to (\ref{eq:nuisanceH0H1}). 
This simplified problem can be handled via a REGROW $\S$-variable just like in the previous subsection, with now $\theta = (\delta^+,\gamma)$: 
we take $f((\delta^+,\gamma)) = \gro((\delta^+,\gamma))$ 
and apply Theorem~\ref{thm:maina} as in (\ref{eq:jipre}). This gives an $\S$-variable $\eval^*_{\delta^+} := p_{W^*_1}({\bf Y})/p^*_0({\bf Y})$ with $W^*_1$ a prior on $\{(\delta^+,\gamma): \gamma \in \Gamma\}$. Using this REGROW rather than GROW approach reflects a particular interpretation of what it means for a parameter $\gamma$ to be nuisance: we have no idea of what the true $\gamma$ might be and are therefore prepared to incur the same expected loss of growth for not knowing $\gamma$, irrespective of what $\gamma$ is. 
Solving this problem for all $\delta^+ \geq \delta^-$ gives us a collection of $\S$-variables $\cE_{\geq \delta^-} := \{ \eval^*_{\delta}: \delta \geq \delta^-\}$. Now suppose there exists another $\S$-variable $\eval^*$ such that 
\begin{equation}\label{eq:above}
\sup_{\eval \in \cE_{\geq \delta^-} } \inf_{\theta \in \Theta_1 } \Exp_{P_\theta} [\log \eval] =
\inf_{\theta \in \Theta_1 } \Exp_{P_\theta} [\log \eval^*]
\end{equation}
That is, we pick the worst-case optimal $\S$-variable among $\cE_{\geq \delta^-}$, thereby applying GROW on a meta-level as it were, after restricting ourselves to $\S$-variables that are themselves REGROW for fixed $\delta$ and unknown $\gamma$. This $\eval^*$ is then our choice for solving the original problem (\ref{eq:nuisanceH0H1}).

\begin{figure}[]
	\centering
	\includegraphics[width=0.25\textwidth]{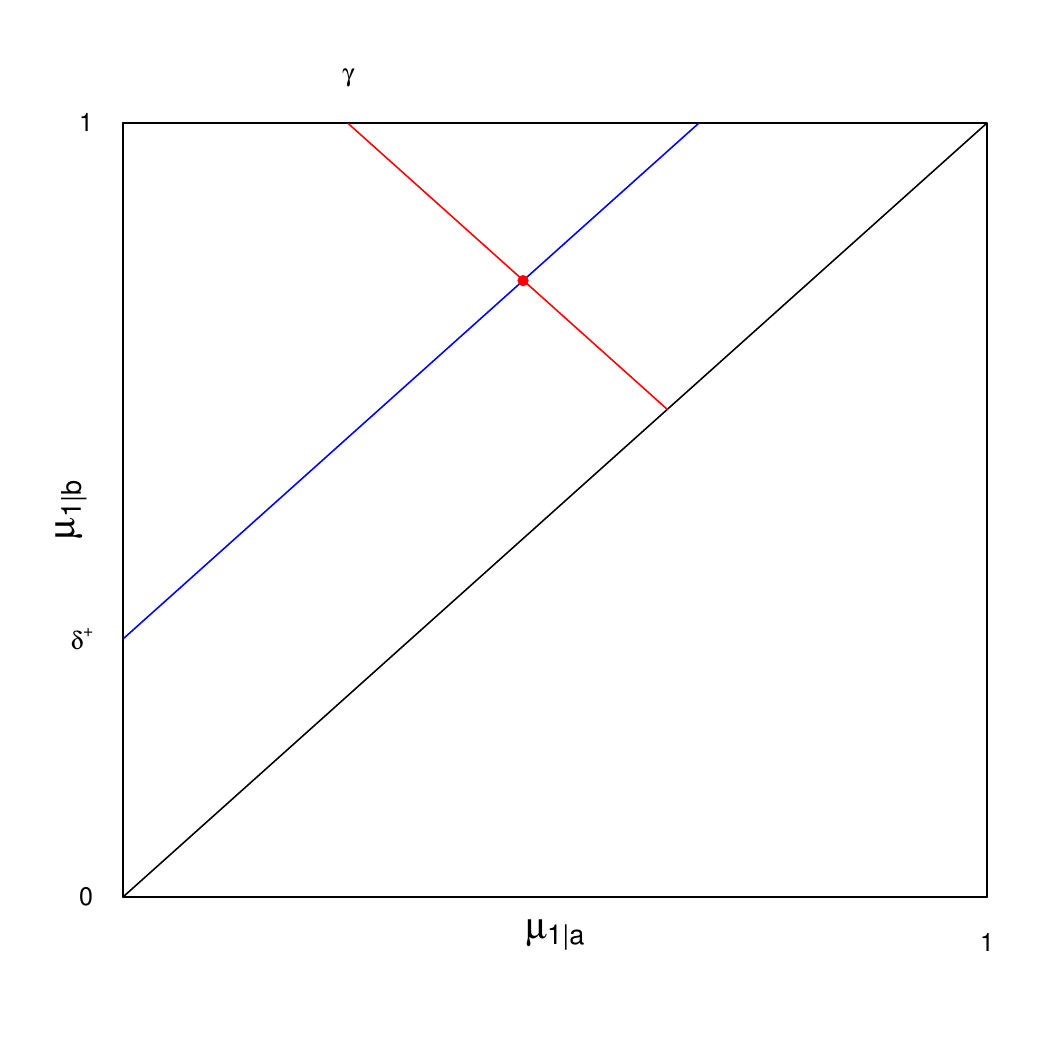}
	\caption{The $2 \times 2$ model. The diagonal represents the null, the decreasing line the set of parameters with nuisance parameter $\gamma = 1/4$ and the blue increasing line is $\Theta_{1,\delta^+}$ for $\delta^+ = 1/3$.}\label{fig:balk}
\end{figure}
\begin{ownexample}{\bf\ [$2 \times 2$ Tables, Continued]}
	\label{ex:2x2b}{ \normalfont We can reparameterise $\{ P_{\mu_{1|a},\mu_{1|b}} : (\mu_{1|a},\mu_{1|b}) \in [0,1]^2 \}$ as $\Theta = \{(\delta,\gamma): \delta \in \Delta$, $\gamma \in [0,1]$ \}
	using $\gamma = (n_a \mu_{1|a} + n_b \mu_{1|b})/(n_a+n_b)$ as a nuisance parameter: the marginal probability of observing a $1$. For $\delta$ we can take, for example, $\delta = \mu_{1|b} - \mu_{1|a}$ to be our notion of effect size, the {\em substantive difference}, with $\Delta = [-1,1]$.  Another popular choice, like substantive difference considered by \cite{Adams20,TurnerLG21} is $\delta = \log ( \mu_{1|b}/(1- \mu_{1|b})) \cdot (1- \mu_{1|a})/\mu_{1|a}$, i.e.\ the log-odds ratio, but for simplicity we stick to the substantive difference here. We take a $\Theta_1$ and $\Theta_0$ relative to some effect size $\delta^+$ and $\delta^-$ as in (\ref{eq:nuisanceH0H1}) above, where for simplicity we will take $\delta^- = 0$ and $\Delta = [0,1]$
	and also $n_a = n_b$ so that $\gamma = (\mu_{1|a}+\mu_{1|b})/2$.  The situation is depicted in Figure~\ref{fig:balk}, where we took, as an example, $\Theta_1$ and $\Theta_0$ defined relative to $\delta^+ = 1/3$ and $\delta^- = 0$. 
	
	Now, assume first that the true value of  $\gamma$ were given in advance. We would then  be dealing with a one-parameter exponential family model represented by a straight decreasing line in Figure~\ref{fig:balk}; the Figure illustrates this for $\gamma = 2/3$. We would then be in the situation of Section~\ref{sec:effectsize}, Example~\ref{ex:ponential}, and find, for any $\delta^+$, that  $\gro((\delta^+,\gamma)) =$ \\ $\inf_{W_0 \in \cW(\Theta_0)} D(P_{\delta^+,\gamma} \| P_{W_0}) = 
	D(P_{\delta^+,\gamma} \|P_{0,\gamma})$ where the latter equality was already stated as (\ref{eq:eval2x2}) in Example~\ref{ex:2x2}.
	
Now we look at unknown $\gamma$. As suggested above, we first set $\delta := \delta^+$ and test $\Theta_{1,\delta}$  vs. $\Theta_{1,0}$ (the increasing lines in Figure~\ref{fig:balk}), taking the REGROW $\S$-variable relative to $f((\delta,\gamma)) = \gro((\delta,\gamma))= D(P_{\delta,\gamma} \| P_{0,\gamma})$.
	Then the minimum $W^*_1$ on $\Theta_{1,\delta}$ and $W^*_0$ (with  $W^*_1$ putting mass $1$ on $\delta$ and spreading its mass over $\gamma$, and $P^*_0 = P_{W^*_0}$)  as in (\ref{eq:jipre}) are achieved and have finite support, and the finite KL condition of the theorem applies (Appendix~\ref{app:regularityconditions}). 
	This solves the problem for testing $\Theta_{1,\delta}$  vs. $\Theta_{1,0}$ for $\delta = \delta^+$; by varying $\delta$ we get a collection of  $\S$-variables $\cE_{\geq 0}$ containing an $\eval^*_{\delta}$ for each fixed $\delta \geq 0$. We then pick the $\eval^*$ among $\cE_{\geq 0}$ satisfying (\ref{eq:above}), which turns out equal to $\eval^*_{\delta^+}$: it has a point mass on $\delta^+$ again.  }
\end{ownexample} 

\paragraph{Discussion}
In our examples, we have used (and will keep using in the next section) the REGROW approach to first eliminate nuisance parameters, if they are present, followed by a GROW approach for the parameters of interest.\citep{TurnerLG21,TerschurePLG21}
find that this gives intuitive $\S$-variables that perform well in practical applications, not just directly in the GRO sense but also in terms of secondary measures such as a power analysis (Section~\ref{sec:competitive}) or as the basis of anytime-valid confidence intervals \citep{TurnerLG21}. 
Still, it may not always be the best way to go. For example, a REGROW approach for the parameter of interest when a minimum effect size is given may sometimes be sensible as well. Let us consider this a bit further for (for simplicity) the case with a minimum effect size $\delta^+$ but without nuisance parameters, as in Example~\ref{ex:ponential}. REGROW would amount here to using $E^*_{W_1} = p_{W_1}(\bY)/p_{\delta^-}(\bY)$ with some prior $W_1$ spread out on the set $\Theta_1 = \{\delta: \delta \geq \delta^+ \}$. Then we would get
$
\Exp_{P_{\delta}}[\log E^*_{W_1}]
< \grow(\Theta_1) \text{\ for $\delta$ close to $\delta^+$\ }
; 
\Exp_{P_{\delta}}[\log E^*_{W_1}]
\gg \grow(\Theta_1) \text{\ for $\delta \gg \delta^+$\ }
$
so we would win if we are `lucky' and it turns out that $\delta \gg \delta^+$. However, in practice we often deal with small sample sizes, and  $\delta$'s that may very well  be very close to $\delta^+$. Then (as our experiments done for the papers above indicate) the difference in `$<$' above is non-negligible, and the GROW approach seems safer, since for the GROW $\S$-variable we automatically have $\Exp_{P_{\delta}}[\log E^*_{W_1}]
\geq \grow(\Theta_1)$ for all $\delta \in \Theta_1$. 

\subsection{Theorem~\ref{thm:maina} in Full: Application to Bayesian and Sequential $t$-test}\label{sec:fulltheorem} 
If we try to apply Theorem~\ref{thm:maina} as above to the $t$-test, a prototypical nuisance setting, we run into the issue that the minimum KL  is not achieved. This problem can be solved by extending the theorem further, allowing for densities on a {\em coarsening\/} of $\Ydata$. This is
any random variable $\bV$ that can be written as a function of $\Ydata$,
i.e.\ $\bV=g(\Ydata)$ for some function $g$; we retrieve the previous version of Theorem~\ref{thm:maina} if we take $g$ the identity and $\bV=\Ydata$. We now present Theorem~\ref{thm:maina} in full generality, allowing for such coarsening and additionally for considering the best $\S$-variable on a modified $\cH_1$, consisting of any convex set of Bayes marginal distributions with priors on $\Theta_1$. This is needed for accommodating the $t$-test. It also allows us to incorporate {\em robust Bayesian\/} settings \citep{Berger85,GrunwaldD04}, but we will not further pursue those here.
In the theorem we use the following notation: for (sub-) distribution $P$ for $\bY$,  $P^{[\bV]}$ denotes the marginal (sub-) distribution of $P$ for  $\bV$, and $p'$ denotes its density.  \\

\noindent {\bf Theorem~\ref{thm:maina}, Full Generality}
\textit{ Let $f(\theta)$ be a function that is bounded on $\Theta_1$. Suppose all $P_{\theta}, P_{\theta'}$ with $\theta,\theta' \in \Theta_1$ satisfy $D(P_{\theta} \| P_{\theta'}) < \infty$, and have full support. Let $\cW_1 \subseteq \cW(\Theta_1)$ be convex. If 
	for some coarsening $\bV$
	of $\Ydata$ we have: 
	\begin{multline}\label{eq:tuinslang}
	\inf_{W_1
		\in \priorset_{1} } \inf_{W_0 \in \cW(\Theta_0)} \left(\ D(P_{W_1} \|
	P_{W_0}) - f(W_1)  \right)  = \\
	\min_{W_1 \in \priorset_{1} } \inf_{W_0 \in \cW(\Theta_0)}
	\left(\   D(P^{[\bV]}_{W_1} \| P^{[\bV]}_{W_0}) 
	- f(W_1) \right) 
	= \inf_{W_0 \in \cW_0(\Theta_0)} D(P^{[\bV]}_{W^*_1} \|
	P^{[\bV]}_{W_0})  - f(W^*_1) < \infty,\vspace*{-1 em} \end{multline} then there exists an $\S$-variable 
	\begin{equation}\label{eq:finale}
	\eval^f := \frac{p'_{W^*_1}(\bV)}{p^{*'}_0(\bV)},
	\end{equation}
		$p^{*'}_0$ being the density of $P^{*[\bV]}_0$, a (potentially sub-) distribution for $\bV$  that satisfies 
	$\inf_{W_0 \in \cW(\Theta_0)} D(P_{W^*_1} \|
	P_{W_0})=  D(P^{[\bV]}_{W_1^*} \| P^{*[\bV]}_0)$. 
	$\eval^f$ satisfies, essentially uniquely:
	\begin{equation}\label{eq:jipr}
	\inf_{W \in \priorset_1} (\ \Exp_{\Ydata \sim P_W}[\log \eval^f] - f(W) 
	\ ) =
	\sup_{\eval\in \cE(\Theta_0)} \inf_{W \in \priorset_1} (\ \Exp_{\Ydata \sim P_W}[\log \eval] - f(W) \ )=  
	D(P^{[\bV]}_{W^*_1} \| P^{*[\bV]}_0)  - f(W^*_1).
	\end{equation}  
	If further $D(P_{W_1^*}^{[\bV]} \| P^{* [\bV] }_0) = D(P_{W_1^*}^{[\bV]} \| P_{W_0^*}^{[\bV]})$ for some $W_0^* \in \cW(\Theta_0)$, then $P^{* [\bV]}_0 = P_{W_0^*}^{[\bV]}$.}  \\ \ \\ \noindent 
The previous version of Theorem~\ref{thm:maina} is the  special case obtained by setting $\priorset_1 = \priorset(\Theta_1)$, $\bY = \bV$ and using linearity of 
expectation. 
We call $\eval^f$ as in (\ref{eq:jipr}) the REGROW $\S$-variable  relative to offset $f$ and set of priors  $\priorset_1$. If $f$ is constant (no offset), we call it {\em $\cW_1$-GROW}, noting that it gives worst-case optimal growth rate under all priors in $\cW_1$.

\paragraph{The $t$-test Setting}
\label{sec:group}
We return to the setting with a nuisance parameter with notation as in Section~\ref{sec:nuisance}. 
\cite{Jeffreys61} proposed a Bayesian version of the
$t$-test; see also \citep{rouder-2009-bayes}. We start with the models
$\cH_0$ and $\cH_1$ for data $\Ydata = (Y_1, \ldots, Y_n)$ given as $\cH_0
= \{ P_{0,\sigma}(\Ydata) \mid \sigma \in \Gamma \}$; $\cH_1 = \{
P_{\delta,\sigma}(\Ydata) \mid (\delta,\sigma) \in \Theta_1  \}$, where $\Delta = \reals,\Gamma = \reals^+$, $\Theta_1 \coloneqq \Delta
\times \Gamma$ and $\Theta_0 = \{(0,\sigma) : \sigma \in \Gamma \}$, and
$P_{\delta,\sigma}$ has density (with $\overline{y} = \frac{1}{n} \sum_{i=1}^n y_i$)
\begin{align*}
p_{\delta,\sigma}(y)  & = \frac{1}{(2\pi \sigma ^2)^{n/2}} \cdot  \exp\left( -\frac{n}{2} \left[ \left(
\frac{\overline{y}}{\sigma} - \delta \right)^2
+ \left( \frac{\frac{1}{n}\sum_{i=1}^n (y_i -
	\overline{y})^2}{\sigma^2}
\right) \right] \right),
\end{align*}
Jeffreys proposed
to equip $\cH_1$ with a Cauchy prior $\WCauchy[\delta]$ on the {\em
	effect size\/} $\delta$, and both $\cH_1$ and $\cH_0$ with the
scale-invariant prior measure with density $w^H(\sigma) \propto
1/\sigma$ on the variance.
The same formula with the same prior on $\sigma$ but  other priors on $\delta$ was suggested by \cite{lai1976confidence} with a non-Bayesian, martingale interpretation.
Below we will see that, even though 
$w^H(\sigma)$ is improper (whereas the priors appearing in
Theorem~\ref{thm:maina} are invariably proper), the resulting Bayes
factor $E^*$ is an $\S$-variable. We then present Theorem~\ref{thm:particular} which shows that, for priors $W[\delta]$
with more than 2 moments, $E^*$ in fact even is $\cW_1$-GROW with $\cW_1$ the set of all product priors on $\delta \times \sigma$ with marginal $W[\delta]$ on $\delta$, i.e.\ it has  a worst-case optimal growth rate  property
relative to all distributions in $\cH_1$ compatible with
$W[\delta]$. Thus, a form of GROW-optimality holds for most priors $W[\delta]$ 
one might want to use, including standard choices (such as a standard normal) or  the point prior we will suggest further below --- but we do not know if it holds for the moment-less Cauchy proposed by Jeffreys.

\paragraph{Almost Bayesian Case: prior on $\delta$ available}
For any proper prior distribution $\priordist[\delta]$ on $\delta$ and
any proper prior distribution $\priordist[\sigma]$ on $\sigma$, we
define
$
p_{W[\delta],W[\sigma]}(\yvec)   = \int_{\delta \in \Delta} \int_{\sigma \in \Gamma} 
p_{\delta,\sigma}(\yvec)    \dif \priordist {[\delta]} \dif \priordist{[\sigma]}, 
$
as the Bayes marginal density under the product prior $W[\delta] \times W[\sigma]$.

For convenience later on we set the sample space to be
$\cY^n = (\reals \setminus \{0\}) \times \reals^{n-1}$, assuming
beforehand that the first outcome will not be 0.
Now
we define $\bV \coloneqq (\Vcomp{1}, \ldots, \Vcomp{n})$ with
$\Vcomp{i} = \Ycomp{i} /|\Ycomp{1}|$. We have that $\Ydata$ determines
$\bV$, and $(\bV,Y_1)$ determines
$\Ydata = (\Ycomp{1},\Ycomp{2},\ldots,\Ycomp{n})$.  
The distributions in
$\cH_0 \cup \cH_1$ can thus alternatively be thought of as
distributions on the pair $(\Vvec,Y_1)$. $\Vvec$ is ``$\Yvec$ with the
scale divided out'': as is well-known \citep{lai1976confidence,berger1998bayes} and easily shown (Appendix~\ref{app:remainingproofsttest}), under all $P \in \cH_0$, i.e.\ all
$P_{0,\sigma}$ with $\sigma > 0$, $\Vvec$ has the same distribution
$P_0[\bV]$ with density $p'_{0}$.
In the same way, one shows that under
all ${P}_{\priordist[\delta],\sigma}$ with $\sigma > 0$, $\Vvec$ has the same pdf $p'_{W[\delta]}$ (which
therefore does not depend on the prior on $\sigma$). 
We now
get that, with 
 \begin{equation}\label{eq:marginalS}
\eval^*
\coloneqq \frac{p'_{W[\delta]}(\Vvec)}{p'_0(\Vvec)}, \end{equation}
we must have $\Exp_{\Vvec \sim P}[\eval^*]= 1$
for all $P \in \cH_0$, hence it is an $\S$ variable.
Remarkably, this `scale-free' $\S$-variable coincides
with the Bayes factor one gets if one uses, for
$\sigma$, the prior $w^H(\sigma) = 1/\sigma$ suggested by Jeffreys, and
treats $\sigma$ and $\delta$ as independent. That is \cite[page 273]{lai1976confidence} (a full derivation is in Appendix~\ref{app:remainingproofsttest}), 
we have
\begin{equation}\label{eq:suffu}
\frac{\int_{\sigma} p_{W[\delta],\sigma}(\Yvec)
	w^H(\sigma) \dif \sigma}
{\int_{\sigma} {p}_{0,\sigma}(\Yvec) w^H(\sigma) \dif \sigma}
=  \frac{p'_{W[\delta]}(\Vvec)}{p'_{0}(\Vvec)} = \eval^*.
\end{equation}
Despite its improperness, $w^H$ induces a valid $\S$-variable when
used in the Bayes factor. The equivalence of this Bayes factor to
$\eval^*$
simply means that
it manages to ignore the `nuisance' part of the model and models the
likelihood of the scale-free $\Vvec$ instead. The reason this is
possible is that $w^H $ coincides with the right-Haar prior for this
problem \citep{Eaton1989,berger1998bayes}, about which we will say
more below.  Amazingly, it turns out that the $\S$-variable
\eqref{eq:suffu} has a GROW property (among {\em all\/} $\S$-variables for data $\Ydata$,
not just the coarsened $\bV$)  under the weak condition that the
prior $W[\delta]$ has a $(2+\epsilon)$th moment.  This follows from a special case of Theorem 4.2. of \citep{perez2022estatistics} (for the case that $W[\delta]$ puts all its mass on a single $\delta$) and Corollary 8.3. (for general $W[\delta]$. For convenience we re-state this special case here. 
%
%
Let, for priors $W[\delta], W[\sigma]$,
$P_{W[\delta],W[\sigma]}^{[\bV]}$ be the marginal distribution 
with density $p'_{W[\delta],W[\sigma]}$. We have: 
\begin{theorem}{\bf [Special case of Theorem 4.2./Corollary 8.3. of \cite{perez2022estatistics}]}
	\label{thm:particular}
	Let $W[\delta]$ be a distribution on $\delta$ such that ${\bf E}_{\delta \sim
		W[\delta]}[|\delta|^{2+\epsilon}] < \infty$ for some $\epsilon >
	0$ (in particular this includes all degenerate priors with mass 1
	on a single ${\delta}$).  Let $\cW[\Gamma]$ be the set of all probability
	distributions $W[\sigma]$ on the variance $\sigma$. 
	Let
	$\priorset_1$ be the set of all product distributions on $\delta
	\times \sigma$ such that, for each 
	$W'\in \cW_1$, $\delta$ and $\sigma$ are independent and its marginal on $\delta$, i.e.\ $W'[\delta]$, coincides with $W[\delta]$.
	We have: 
	\begin{align}
	\inf_{W \in \priorset_1} \inf_{W[\sigma] \in \cW[\Gamma]}
	D(P_{W} \| P_{0,W[\sigma]}) =    
	\inf_{W[\sigma], W'[\sigma] \in
		\priorset(\Gamma)} D(P_{W[\delta], W[\sigma]} \| P_{0,W'[\sigma]})\label{eq:newdelta}
	&=
	D(P_{W[\delta]}^{[\bV]} \| P_0^{[\bV]}).
	\end{align}
	\end{theorem}
The theorem allows us to use 
Theorem~\ref{thm:maina} as above with constant $f(\delta,\sigma) = 0$ (note that $\cW_1$ is convex) to conclude that  $\eval^*$ as in (\ref{eq:marginalS})
is equal to $\eval^f$ as in (\ref{eq:jipr}): the Bayes factor based on the right
Haar prior, is not just an $\S$-variable, but is even  
GROW
relative to the set of all priors on $\delta \times \sigma$  that are  compatible with $W[\delta]$.

\paragraph{REGROW-GROW safe $t$-test with minimum effect sizes}
Suppose 
we want to test $\Theta_1$ vs. $\Theta_0$ as in (\ref{eq:nuisanceH0H1}) with fixed effect sizes $\delta^+$ and $\delta^-$ and with $\sigma^2$ in the role of $\gamma$. 
We proceed exactly as we did underneath (\ref{eq:nuisanceH0H1}): we first consider the test $\{(\delta^+,\sigma^2): \sigma^2 > 0 \}$ vs. 
$\{(\delta^-,\sigma^2): \sigma^2 > 0 \}$ for the fixed given $\delta^+$ using the REGROW criterion with $f((\delta,\sigma)) = \gro(\delta,\sigma)$. We have \citep[Section 4.3]{KoolenG21} that $f(\delta^+,\sigma)
= (n/2) \log (1+\delta^{+2})$ is constant on $\sigma$. Therefore we can use Theorem~\ref{thm:maina} in its most general form above in combination with Theorem~\ref{thm:particular} (applied with point prior $W[\delta]$ on $\delta^+$) to conclude that (both the GROW and) the REGROW $\S$-variable are given by  $\eval^*_{\delta^+} := p'_{\delta^+}(\bV)/p'_{\delta^-}(\bV)$. Since Proposition~\ref{prop:dominance} is applicable to sets of distributions defined on $\bV$ rather than $\bY$ 
(details in Appendix~\ref{app:remainingproofsttest}),
we find that, with $\cE_{\geq \delta^+} := \{ \eval^*_{\delta} : \delta \geq \delta^- \}$ that
$
\sup_{E \in \cE_{\geq \delta^+}} \inf_{\sigma > 0, \delta \geq \delta^+} \Exp_{{\bf Y} \sim P_{\delta,\sigma}}[\log \eval] =
\inf _{\sigma > 0, \delta \geq \delta^+} \Exp_{{\bf Y} \sim P_{\delta,\sigma}}[\log \eval^*_{\delta^+}]
$
so $\eval^*_{\delta^+}$ may be thought of as first applying REGROW, to get rid of the nuisance parameter, and then applying GROW --- just like in the $2 \times 2 $ Example~\ref{ex:2x2b}. 

\paragraph{Extension to General Group Invariant Bayes Factors}
%
%
In a series of papers
\citep{berger1998bayes,dass-2003-unified-condit,bayarri2012criteria},
Berger and collaborators developed a theory of Bayes factors for
$\cH_0= \{P_{0,\gamma} : \gamma \in \Gamma\}$ and $\cH_1 = \{
P_{\delta,\gamma}: \delta \in \Delta, \gamma \in \Gamma\}$ with a
nuisance parameter (vector) $\gamma$ that appears in both models and
that satisfies a group invariance; the Bayesian $t$-test is the
special case with $\gamma = \sigma, \Gamma = \reals^+$ and with the
scalar multiplication group and $\delta$ an `effect size'. Other
examples include regression based on mixtures of $ g$-priors
\citep{liang2008mixtures},  testing a
Weibull vs.\ the log-normal and many more 
\citep{dass-2003-unified-condit}.
The reasoning of the first part of this section straightforwardly
generalises to all such cases:  the Bayes factor based on using the right Haar measure on
$\gamma$ in both models gives rise to an $\S$-variable.
Theorem 4.2. of 
\cite{perez2022estatistics} shows that, if the underlying group satisfies a condition called {\em amenability\/} (which holds, e.g., for scaling as in the t-test, but also for e.g.\ rotations and affine transformations as in parametric linear regression models), then  the resulting
Bayes factor is  GROW relative to a suitably defined set $\cW_1$. Theorem~\ref{thm:particular} above is the very special case of their result when instantiated with $\gamma$ instantiated to the variance in the t-test (scaling). Although its proof is quite different, the general result may be viewed as the `e-variant' of the classical Hunt-Stein theorem \citep[Section 8.5]{lehmann2005testing}, with `power' in that theorem replaced by `GROW'. 
\cite[Proposition 4.4]{perez2022estatistics} then implies that in all such cases, this GROW Bayes factor is in fact also REGROW. Remarkably therefore, with parameters representing group transformations, unlike e.g.\ for the $2 \times 2$ case, GROW and REGROW e-variables generally coincide.

\section{(RE)GRO(W), Optional Continuation and Stopping}
\label{sec:GROanalysis}
\newcommand{\cU}{\ensuremath{\mathcal{U}}}
We now address two related questions:
\begin{enumerate}
\item We focused on Type-I error safety under optional continuation (OC). Can we also get safety under {\em optional stopping\/} (OS), and what is the difference? 
	\item The GRO-criteria were chosen to optimise expected capital (logarithmic) growth `locally', within a study. How well do GRO-criteria go together with OC over several studies?
	\end{enumerate}
To make the questions concrete, we consider a specific set-up with data stream 
$Y_{j,1}, Y_{j,2}, \ldots$  corresponding to the $j$-th study to be performed. 
In the first study we observe batch of outcomes $\bY_{(1)} = (Y_{1,1}, \ldots, Y_{1, N_{(1)}})$; in the second study (if it is performed at all) $\bY_{(2)} = (Y_{2,1}, \ldots, Y_{2,N_{(2)}})$; and so on. 
For further simplicity we will assume that all $Y_{j,i}$ are i.i.d.\
The set-up slightly differs from  Example~\ref{ex:normal} in which the second study's data was part of the same stream as the first; at the expense of additional notation, everything that follows can be formalized  in that setting as well. 

The conditional $\S$-variables determining our test martingale are now determined by a sequence of  stopping times $N_{(1)}, N_{(2)}, \ldots$.  
The first stopping time $N_{(1)}$ is defined as a stopping time on the first sequence $Y_{1,1}, Y_{1,2}, \ldots$ relative to filtration $(\sigma(Y_1^n))_n$ and defines a stopped $\sigma$-algebra $\cF_{(1)} :=\sigma(Y_1^{N_{(1)}})= \sigma(\bY_{(1)})$.
$N_{(2)}$ is defined on the second sequence $Y_{2,1}, Y_{2,2},\ldots$, but it is also allowed to depend on previous data $\bY_{(1)}$, i.e it is a stopping time relative to filtration $(\sigma({\bf Y}_{(1)}, Y_2^n))_n$, and defines a stopped $\sigma$-algebra $\cF_{(2)} :=\sigma(\bY_{(1)},\bY_{(2)})$. In general,
$N_{(m)}$ is defined relative to filtration $\sigma(\bY_{(1)},\ldots,\bY_{(m-1)}, Y_m^n)_n$, and defines a stopped $\sigma$-algebra $\cF_{(m)} :=\sigma(\bY_{(1)},\ldots,\bY_{(m)})$. We let $\cN_{(m)}$ be the collection of all stopping times for the $m$-th study, i.e.\ relative to
$\sigma(\bY_{(1)},\ldots,\bY_{(m-1)}, Y_{m,1}, \ldots, Y_{m,n})_n$.
The sequence of stopped $\sigma$-algebras thus defines a new filtration $(\cF_{(m)})_m$, which we call 
the filtration at the {\em study level},
since $\cF_{(m)}$ denotes all information available after $m$ studies or trials have been completed --- it is the filtration referred to in Proposition~\ref{prop:optional.stopping} (for the t-test we need to extend this set-up a little, see Example~\ref{ex:t}).  

\paragraph{OC vs. OS}
In the optional continuation setting, we assume that, after having observed and analyzed $m-1$ studies, we either stop, or continue to the next study. In the latter case we need to specify a stopping time $N_{(m)} \in \cN_{(m)}$ and a conditional e-variable $E_{(m)} \in \cE_{(m)}$ where $\cE_{(m)}$ is the set of all $\cF_{(m-1)}$-conditional e-variables.
For example, in the setting of Example~\ref{ex:oc}, with a simple null $\cH_0 = \{P_0\}$ and composite alternative $\cH_1 = \{P_{\theta}: \theta \in \Theta_1\}$ and $N_{(m)} \in \cN_{(m)}$ an arbitrary stopping time for the $m$-th study, 
$e_{N_{(m)},W}(Y_{m,1}, \ldots, Y_{m,N_{(m)}}) := p_{W}(Y_{m,1}, \ldots, Y_{m,N_{(m)}})/p_0(Y_{m,1}, \ldots, Y_{m,N_{(m)}})$ is an $\S$-variable for every `prior' distribution $W$ on $\Theta_1$ (here we generalize the notation of Example~\ref{ex:oc} to allow for data-dependent stopping times). 
In this setting the analyst can freely choose (or somebody else can impose) any $N_{(m)}$ and any $W$ and use the corresponding $\S$-variable $e_{N_{(m)},W}$; all such $\S$-variables are contained in the set $\cE_{(m)}$.  As explained in Example~\ref{ex:oc}, this includes the choice to set $W_{(m)}:= W_{(1)}$ (re-use the original prior) and the choice to set $W_{(m)}:= W_{(1)}(\cdot \mid \bY^{(m-1)})$, i.e.\ use the Bayes posterior based on the previous studies.


We can now interpret our various GRO criteria
as each providing a {\em prescription\/} to choose specific elements of $\cE_{N_{(m)}}$, for all stopping times $N_{(m)}$ that are constant given the outcomes of previous studies $\bY^{(m-1)}$, i.e.\ that are $\cF_{(m-1)}$-measurable.
To see this, note that each GRO criterion in combination with $\cH_0$ and $\cH_1$ defines, for each $n$,  an $\S$-variable $S^{[n]}$ for a single sequence $Y_1, \ldots, Y_n$. In all cases, this can be written as $S^{[n]} = s^{[n]}(Y^n)$ for some function  $s^{[n]}$;  the function $s^{[n]}$ is what is really specified; we call $s^{[1]}, s^{[2]}, \ldots$ an e-specification. If $N_{(m)}:= n_m$, we can thus simply set $E_{(m)} := s^{[n_m]}(Y_{m,1}, \ldots, Y_{m,n_m})$. 
We call this the {\em plug-in\/} method for constructing $E_{(m)}$ from specification $s^{[1]}, s^{[2]}, \ldots$ (such specification may correspond to one of our GRO criteria, but in general it could be arrived at in different ways as well) . 
The running product of $E_{(m)}$ thus constructed provides, via Proposition~\ref{prop:optional.stopping}, a test martingale at the study level.

In contrast, {\em optional stopping\/} scenarios  usually concern only a single {\em data-level process\/} $Y_1, Y_2, \ldots$, without any super-structure in terms of subsequent studies. The scenario is  completely determined by a sequence of  conditional $\S$-variables $S_1, S_2, \ldots$, i.e., applying Definition~\ref{def:cond.safe} at the data level, such that 
for all $n \in \naturals$,
	$\text{for all $P \in \cH_0$:}\ \ {\bf E}_{P}[S_n \mid \mathcal F_{n-1} ]  \leq 1  \text{\ a.s.}
	$, with $\cF_{n} = \sigma(Y^{n})$.  Their running product $S^{[1]}, S^{[2]}, \ldots$ (with $S^{[n]} = \prod_{i=1}^n S_i$) then forms a test martingale. 
Recall from Corollary~\ref{cor:villerobbins} in Section~\ref{sec:os} that for any nonnegative random process $E^{(1)}, E^{(2)}, \ldots$ at the study level (adapted to $(\cF_{(m)})_{m}$) we say that the corresponding threshold test is {\em safe under OC\/} (with respect to Type-I error) if the Ville-Robbins inequality (\ref{eq:villerobbins}) holds. Extending this definition in the natural way, we say, for a data-level process of nonnegative random variables $S^{[1]}, S^{[2]}, \ldots$, i.e.\ with $S^{[j]}$ adapted to $\cF_j$, that the corresponding threshold test is safe under OS
(with respect to Type-I error) if again the Ville-Robbins inequality holds (with $E^{(n)} := S^{[n]}$).
\begin{figure} 
	\center 
	\includegraphics[width=0.9\textwidth]{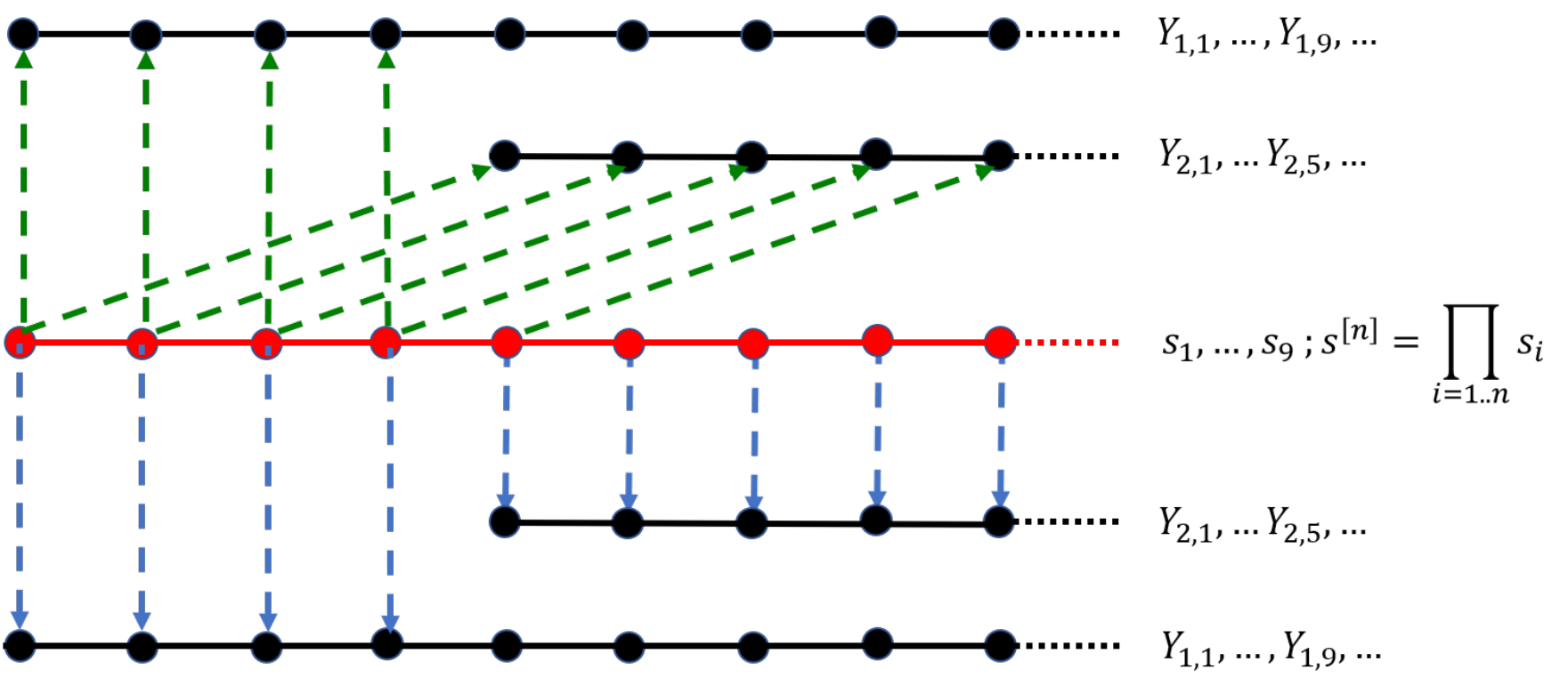}
	\caption{\label{fig:seqdec} Two ways of applying {\em seqdec\/} e-variable specifications to subsequent studies. We observe $N_{(1)}= 4$ data points in the first study (represented by the top and bottom-most line), and $N_{(2)} = 5$ data points in the second (represented by the 2nd and 4th line above). Since the specification is seqdec, it provides a sequence of functions $s_i$ on $Y^i$, represented by the dots on the red-line. The plug-in application uses $s^{[4]}$ for the first batch and  $s^{[5]}$ applied to the second data batch, as depicted in the top two lines. The sequential application uses $s^{[4]}$ for the first and $\prod_{i=1}^5 s_{N_{(1)}+i}$ for the second, as depicted on the bottom two lines. 
	If the specification is not seqdec, then $s^{[n]}$ does not decompose into a product of $s_i$ and in general only the plug-in application can be used.}
\end{figure}

\paragraph{Sequentially Decomposable $\S$-Specifications}
In many (not all) cases, the GRO-specification $s^{[1]}, s^{[2]}, \ldots$ forms itself a test martingale relative to  some filtration $(\cG_n)_n$: there exist a sequence of functions $s_1, s_2, \ldots$, with $s_i$ a function on $\cY^i$, such that, for all $n$,  $s^{[n]}(Y^n) = \prod_{i=1}^n S_i$ with $S_i := s_i(Y^i)$ and
$\{S_i\}_i$ is a conditional e-variable collection relative to filtration $(\cG_n)_n$.   
We will say that such an $\S$-variable specification is {\em sequentially decomposable}, or {\em seqdec\/} for short, relative to filtration $(\cG_n)_n$; in all our examples except the t-test (Example~\ref{ex:t}) we can take $\cG_n = \sigma(Y^n)$.  Seq\-dec specifications have a direct link with the OS setting, resulting in three remarkable properties: first, assuming still that data are i.i.d.,
any study-level test martingale process we can construct via the plug-in method (see above) based on a seq\-dec specification also defines a sequence of conditional $\S$-variables and hence a test martingale at the corresponding  {\em concatenated data level\/}
$Y'_1, Y'_2, \ldots$ where $Y'_i$ is arrived at by relabeling $Y_{1,1}, Y_{1,2}, \ldots, Y_{1,N_{(1)}}, Y_{2,1}, Y_{2,2}, \ldots, Y_{2,N_{(2)}}, Y_{3,1}, \ldots$ in order (so that e.g.\ $Y'_{N_{(1)}+N_{(2)}} = Y_{2,N_{(2)}}$). This defines a concatenated data-level filtration   $(\cG'_t)_t$ with
 $\cG'_t =  \sigma(Y'_1,\ldots, Y'_t)$. 
The corresponding sequence of conditional $\S$-variables is then given by $S'_1, S'_2, \ldots$ where, for $T_{(0)} := 0$, $m \in \naturals$, $1 \leq i < N_{(m)}$, $T_{(m)} := \sum_{j=1}^{m} N_{(j)}$, we set $S'_{T_{(m)} + i} := s_i(Y_{m,1}, \ldots, Y_{m,i})$.
This means that besides engaging in optional continuation, we can also safely do optional stopping at this concatenated-data level, since the Ville-Robbins inequality holds at this level by Corollary~\ref{cor:villerobbins}.

Second, we can use seqdec specifications to extend the plug-in method  (which required constant stopping times $N_{(m)}$) to prescribe conditional E-variables for stopping times $N_{(m)}$ that are not constant given $\bY^{(m-1)}$: 
for arbitrary $N_{(m)} \in \cN_{(m)}$, 
we set $E_{(m)} :=  \prod_{i=1}^{N_{(m)}} s_i(Y_{m,1}, \ldots, Y_{m,i})$. By construction, this reduces to the plug-in method whenever $N_{(m)}$ is constant given $\bY^{(m-1)}$, so it is a proper extension, and it follows as a direct corollary of the fact that $E_{(m)}$ can be rewritten as 
$\prod_{i=1}^{N_{(m)}} S'_{T_{(m-1)}+i}$, i.e.\ a product of factors in a test martingale, that any $E_{(m)}$ constructed in this manner for any seqdec specification is a $\cF_{(m-1)}$-conditional $\S$-variable.

Third, seqdec specifications allow for alternative ways  to create study-level processes from $\S$-specifications beyond the plug-in method used thus far. For example, we can set 
$E_{(m)} := \prod_{i=1}^{N_{(m)}} s_{T_{(m-1)} +i}(\bY^{(m-1)},Y_{m,1}, \ldots, Y_{m,i})$ 
for arbitrary stopping times $N_{(1)}, \ldots, N_{(m)}$. 
Once again, this also defines a martingale at the concatenated data-level  --- 
it is simply the martingale that arises if we view the $m$ studies as one single, long sequence of $T_{(m)}$ data points.  We call this the {\em sequential\/} application of the $\S$-variable specification --- see Figure~\ref{fig:seqdec}.

\begin{ownexample}\label{ex:t}
{ \normalfont All GRO-type specifications based on a simple $\cH_0 = \{P_0\}$ are likelihood ratios $s^{[i]} = q(Y^i)/p_0(Y^i)$, and hence will be seqdec and can thus be combined with optional stopping. In Example~\ref{ex:oc}, choosing $W_{(m)} := W_{(1)} \mid \bY^{(m-1)}$ to be the Bayesian posterior corresponds to the sequential application of the $W_{(1)}$-GRO specification of Section~\ref{sec:BayesGRO}; using $W_{(m)} := W_{(1)}$ corresponds to the plug-in application of the $W_{(1)}$-GRO application. 
This illustrates that we may think of our GRO criteria not as prescribing a single choice $E_{(m)} \in \cE_{N_{(m)}}$, but rather as suggesting to choose $E_{(m)}$
from a preferred subset $\cE'_{N_{(m)}}$ of $\cE_{N_{(m)}}$; 
the end-user may then pick any e-variable in $\cE'_{N_{(m)}}$. For example, in the case of Example~\ref{ex:oc} with simple $\cH_0$, we may further specify a set of distributions $\cW_1 \mid \Yb^{(m-1)}$ on 
$\Theta_1$ that we deem `reasonable' given previous outcomes $\Yb^{(m-1)}$, which may include the full Bayesian posterior, the originally used prior, combinations of these, tempered posteriors and so on; and we may then suggest the set $\cE'_{N_{(m)}}$ of all e-variables for the $m$-th study based on a prior in $\cW_1 \mid \Yb^{(m-1)}$.

For the (RE)GRO(W)-specifications with composite null, one immediately verifies that those of Proposition~\ref{prop:dominance} and Example~\ref{ex:ponential}  
also are seqdec, since they equal the likelihood ratio between the same two distributions $P_{\delta^+}$ and $P_{\delta^{-}}$ irrespective of $n$. The t-test GROW/REGROW e-variables for arbitrary prior $W[\delta]$ on $\delta$ as in (\ref{eq:marginalS}) are  seqdec as well, but to formalize this statement we have to extend the setting. In general, the seqdec definition makes sense for every filtration $(\cG_n)_n$ with $\cG_n = \sigma(V^n)$ where $V_n = v_n(Y^n)$ for some sequence of functions $v_1, v_2, \ldots$ defined on $\cY^1, \cY^2, \ldots$ respectively. The previous definition is the special case with $v_n(Y^n) = Y_n$.  In the t-test example we can take $V_n$ as in Section~\ref{sec:group} such that $v_n(Y_{1}, \ldots, Y_{n}) = Y_{n}/|Y_{1}|$.
Let us illustrate how, with this coarser filtration, we can still apply the plug-in method for non-constant stopping times $N_{(m)}$. 
For this, we also have to coarsen the filtrations relative to which the $N_{(m)}$ are defined : the set of allowed stopping times  $\cN_{(m)}$ is now restricted
to lie in $(\cF_{(m-1)} \cup  \sigma(V_{m,1}, \ldots, V_{m,n}))_{n}$ with $V_{j,n} = v_{j,n}(Y_{j,1}, \ldots, Y_{j,n})$ for some collection of functions $(v_{j,n})_n$ (recall that before they were members of 
$(\cF_{(m-1)} \cup  \sigma(Y_{m,1}, \ldots, Y_{m,n}))_n$)
. For the $t$-test example we set $v_{j,n}(Y_{j,1}, \ldots, Y_{j,n}) = Y_{j,n}/|Y_{j,1}|$.
The study-level filtrations $\cF_{(1)} :=\sigma(Y_{(1)}), \cF_{(2)} := \sigma(Y_{(1)},Y_{(2)}), \ldots$ remain unchanged and do not hide any information in the $Y_{(j)}$. 
 In practice the restriction of $\cN_{(m)}$ will not be of much concern since `most' stopping times are still allowed, including  
 the most aggressive stopping rule: stop the $m$-th study at the smallest $n'$ such that $\prod_{i=1}^{n'} s_i(V_{m,i}) \geq 1/\alpha_{(m)}$, where $\alpha_{(m-1)}$ is some threshold that is allowed to depend on $\bY^{(m-1)}$. 
We can also allow for the sequential (rather than plug-in) application of the t-test e-variable specification so that effectively we view all studies as subsequent outcomes of a single study,  by restricting the filtrations in a slightly different way; we omit the details. We can even let the choice between a sequential or plug-in choice for $E_{(m)}$ depend on past data, but this requires further generalizations of the $\cF_m$ and $(v_{j,n})_n$ definitions that we shall not pursue here. 
}\end{ownexample}
Summarizing, the practical setting  we have in mind when we speak about OS and OC respectively is quite different: OC concerns study-level martingales constructed by deciding,  on the fly , after the $m-1$-st study, whether to continue to the $m$-th study and if so, what new $\cF_{(m-1)}$-conditional $\S$-variable to take from the set  $\cE_{(m)}$ of possible $\S$-variables  of use. OS is about  data-level martingales with only a stop/continue choice. Nevertheless, the formal definitions of (Type-I error) `safety under OS' and  `safety under OC' only differ in that `study-level' is  replaced by `data-level'. We may say that {\em combining e-variables $E_{(m)}$ by multiplication is always Type-I error safe under OC. If the e-variable prescription used to construct  $E_{(m)}$ has the seqdec property, then the stopping times $N_{(m)}$ used in each study do not need to be specified before the study starts and can even be externally imposed, so that we have Type-I error safety not just under OC but also under OS within each individual study.}

There is one final subtlety to consider: in the OS setting, with a single stream of data $Y_1, Y_2, \ldots$ and conditional e-variables $S_{1}, S_2, \ldots$ and test martingale $S^{[1]}, S^{[2]}, \ldots$, the Ville-Robbins inequality (\ref{eq:villerobbins}) implies that our Type-I error bound $\alpha$ is guaranteed no matter when we stop --- in particular, the actual stopping time does not have to be taken relative to the filtration $(\cG_n)_n$ --- we may even peek into the future to decide whether to stop now. This suggests that our care in specifying the correct filtrations for the t-test was unnecessary --- it seems we can use any stopping rule we like! 
But this becomes incorrect once we move from OS at the data-level to OC at the study-level: if, in the t-test setting with the plug-in construction of the $\S$-variable $E_{(m)}$ for the $m$-th study, we were to set the $N_{(m)}$ so that they are not stopping times relative to $(\cF_{(m-1)} \cup \sigma(V_{m,1}, \ldots, V_{m,n}))_n$ but only relative to the more refined $(\cF_{(m-1)} \cup \sigma(Y_{m,1}, \ldots, Y_{m,n}))_n$  , we could end up creating fake conditional $\S$-variables at the study-level, i.e.\ so that $\Exp_{P_0}[E_{(m)} \mid \cF_{(m-1)}] > 1$ for all $m$ (\cite[Appendix B]{perez2022estatistics} constructs such a random variable for the t-test). And then the Ville-Robbins inequality may {\em not\/} hold any more at the study-level, and we loose the Type-I error guarantee under optional continuation.
\paragraph{Local vs. Global GRO}
Now consider  two data streams, $Y_{1,1}, \ldots, Y_{1,n_1}$ and $Y_{2,1}, \ldots, Y_{2,n_2}$ of fixed lengths $n_1$ and $n_2$. We may alternatively model these two streams as a single stream  $Y'_{1}, \ldots, Y'_{n_1+n_2}$ of length $n_1+ n_2$.
If we use an $\S$-variable specification with the seqdec property to generate a study-level test martingale in the sequential way (i.e.\ not the plug-in way) described above, we will get that, $E^{(2)} = E_{(1)} \cdot  E_{(2)}$ constructed for the first two data streams
(with $E_{(1)} = \prod_{i=1}^{n_1} s_{i}(Y^i)$ and $E_{(2)} = \prod_{i=1}^{n_2} s_{n_1+i}(Y'^{n_1+i})$) coincides with $E'_{(1)} = \prod_{i=1}^{n_1 + n_2} s^{[i]}(Y'_i)$ constructed for the single alternative stream. We may thus say that the {\em sequential application\/}  of seqdec $\S$-specifications is always {\em coherent\/}: 
applying the specification sequentially-`locally' (separately for both studies) or sequentially-globally (for the concatenated data viewed as one study) gives the same result. For example, the Bayesian $W$-GRO specification of Section~\ref{sec:BayesGRO}, the GROW specification of Proposition~\ref{prop:dominance} and Example~\ref{ex:ponential} and the GROW and REGROW specifications in the t-test example are all seqdec and hence all have this coherence property when applied sequentially.
Sometimes the sequential application of an E-prescription is not feasible or desirable; for example, not all details of previous data may be known. We may then prefer the plug-in application of the E-prescription. Unfortunately, 
the seqdec property is not sufficient to get coherence for the plug-in method: clearly, if we use the same prior $W_{(1)}$ as prior in the Bayesian Example~\ref{ex:normal} for the first and the second study, this leads to different $E^{(2)}$ and $E'_{(1)}$, the latter being equivalent to using the posterior of the first study as prior in the second. A sufficient condition for a plug-in application to satisfy coherence after all is that it satisfies both the seqdec property, and further that $s_i$, with $S_i = s_i(Y^i)$ as in the definition of seqdec, can be rewritten as $s_i(Y^i)= s'(Y_i)$ for a single function $s'$, for all $i$. Then in fact the plug-in and the sequential application of the $\S$-prescription will coincide, and coherence is guaranteed. This  happens in the subset of our examples in which $s^{[i]}(Y^i)$ takes the form $q(Y^i)/p(Y^i)$  for the same $p$ and $q$, for all $i$, as happens e.g.\ in Example~\ref{ex:ponential}.
\paragraph{An Open Question concerning GRO} 
In practice we may very well be in a situation in which OS at the data-level is desirable (see the next section for why it would be), so we want to use a seqdec specification, yet the GRO criterion we are interested in does not give one --- Example~\ref{ex:regrow} illustrates this for the $2 \times 2$ case. We may then try the following approach, which for simplicity we only describe for the REGROW criterion: let $\eval^f_n$ be the REGROW $\S$-variable (\ref{eq:regroweval}) achieving (\ref{eq:jiprb}) with $f(\theta) = \gro(\theta)$ for samples of size $n$. We try to find a sequence of $\S$-variables $E_1, E_2, \ldots$  such that $E_i$ is a $\sigma(Y^{i-1})$-conditional $\S$-variable for $Y_i$ and the product $\S$-variable $E^{[n]} := \prod_{i=1}^n E_i$ achieves (\ref{eq:jiprb}) to within some fixed  $\epsilon$ for all $n$ larger than some minimal $n_0$, i.e.\
\begin{equation}\label{eq:rug}
\inf_{\theta \in \Theta_1} 
(\ 
\Exp_{\Ydata \sim P_\theta}[\log \eval^{[n]}]- \gro(\theta) \ ) 
\geq 
\inf_{\theta \in \Theta_1} (\ \Exp_{\Ydata \sim P_\theta}[\log \eval^f_n] - \gro(\theta) \ ) - \epsilon.
\end{equation}
By construction, the sequence  $E^{[1]}, E^{[2]}, \ldots$ is seqdec and allows for optional stopping, and if we can find $E_1, E_2, \ldots$ such that $\epsilon$ is small for all $n$ larger than or equal to the $n_0$ corresponding to the smallest sample we'd ever be interested in analyzing, we can say that the full process (and not just an instance at a fixed $n$) is `almost' REGROW in the desired sense.
\begin{ownexample}\label{ex:regrow} { \normalfont
\cite{TurnerLG21} 
successfully use this idea for the $2 \times 2$ model. We illustrate this confining ourselves for simplicity to a stream of paired data, i.e.\ $X_1 =a, X_2 = b, X_3=a, X_4= b,\ldots$. First, we note that directly applying the idea above will not work. To see this, consider the simple alternative $\Theta_1 = \{(\mu_{1|a}, \mu_{1|b}) \}$. According to the composite null, the $Y_i$ are i.i.d.\ Bernoulli with parameter $\mu\in [0,1]$, but the $\grow(\Theta_1) = \gro((\mu_{1|a},\mu_{1|b}))$- $\S$-variable for such a $\cH_0$ and single data point $Y_i$ is the trivial $E \equiv 1$. Thus we would get all $E_i$ equal to $1$, and zero growth. However, if we analyze the data in batches of size $2$, so set
	$Y'_1 = (Y_1,Y_2), Y'_2 = (Y_3,Y_4)$ and take as $E'_i = {\rm e}(Y'_i)$ the (nontrivial) $(\mu_{1|a},\mu_{1|b})$-GRO-$\S$-variable for $Y'_i = (Y_{2i-1},Y_{2i})$ then  $E'^{[n]} = \prod_{i=1}^n E'_i$
	is
	the $(\mu_{1|a},\mu_{1|b})$-GRO-$\S$-variable for all $n$: we have the seqdec property and coherence, both for the sequential and for the plug-in method of applying the $(\mu_{1|a},\mu_{1|b})$-GRO-prescription.  
Now in practice we want to consider a composite alternative --- say we consider the full  alternative $\Theta_1 = [0,1]^2$. Then the REGROW prescription will not be seqdec - the prior $W^*_1$ in (\ref{eq:regroweval}) depends on the sample size $n$. However it turns out that for a particular choice of prior $W$ (\citet{TurnerLG21} find it to be the beta-prior with parameters $\alpha=\beta=0.18$) we have the following: if, for all $i$, we take $E'_i$ the $W \mid Y'^{i-1}$-GRO $\S$-variable, $W \mid Y'^{i-1}$ being the posterior based on $Y'^{i-1}$, then
	we numerically find that $E'^{[n]} = \prod_{i=1}^n E'_i$ is, 
	for all but the smallest $n$, very close to the REGROW $\S$-variable $\eval^f_n$ for that $n$, i.e.\ it achieves (\ref{eq:rug}) for small $\epsilon$.  }
\end{ownexample}
The example raises an important question: under what conditions (on model, minimal batch sizes and the like) can we create a seqdec specification that behaves optimally for our desired GRO criterion (as in Section~\ref{sec:BayesGRO} with $W$-GRO, and in Example~\ref{ex:ponential}, with the GROW criterion) or almost optimally (as in the $2 \times 2$ example above with batch size 2, with the REGROW criterion)? 

\section{Competitiveness: GRO and Power}
\label{sec:competitive}
What sample size should we minimally plan for in a study so that we may expect a useful result? The answer depends on whether one looks at $\S$-values purely as measures of evidence, without an accept/reject decision attached, or whether one considers such decisions after all. In the latter case, we can ask, more generally, how competitive e-value based tests are, in terms of required sample size, compared to the standard fixed-sample size Neyman-Pearson approach. We consider the cases without and with accept/reject decisions in turn.
\paragraph{$\S$-Values as Evidence} $\S$-values may be viewed simply as a measure of evidence, extending the evidential interpretation of likelihood ratios \citep{Royall00}. They are then certainly competitive in every sense: for simple $\cH_0$  they coincide with likelihood ratios and Bayes factors, and will give thus as much evidence as these notions do; for composite $\cH_0$, GRO(W) $\S$-variables are designed to give as much expected log-evidence against $\cH_0$ as possible without violating the optional continuation requirement --- in practice in some cases giving a bit more, and in some cases a bit less evidence against the null than standard Bayes factors
(see \cite{TurnerLG21} for a practical example).

Now suppose we have a minimal effect size $\delta$ in mind, and we plan a study in which obtaining data is expensive. What sample size should we plan for? One option is to
pick a certain {\em target growth\/} $L$ (essentially the logarithm of  \cite{Shafer19}'s notion of ``implied target'') and determine the sample size at which we expect to gain $L$. To illustrate, consider the 1-dimensional exponential family case of Example~\ref{ex:ponential} with $\Theta_0 = \{0 \}$. We know that, for a sample of size $n$ , under all $\theta_1 \in \Theta_1$ with $\Theta_1 = \{\theta_1: \theta_1 \geq \delta \}$, we have $\grow(\Theta_1)=  n D(P_{\delta} \| P_0) $ where $D(P_{\delta} \| P_0)$ is the KL divergence for 1 outcome. We then calculate $n_{\grow}$ as the smallest $n$ such that $n D(P_{\delta}\| P_0)\geq L$, i.e.\ $n_{\grow} = \lceil L/D(P_{\delta} \| P_0) \rceil$. In the Gaussian location model, $D(P_{\delta}\| P_0) = \delta^2/2$, so $n = \lceil 2 L/\delta^2 \rceil$. We return to the question of choosing $L$ below. 

\paragraph{$\S$-Values for Decisions}
We can also use $\S$-values in the traditional setting, in which a study ends with an accept/reject decision --- with the proviso that any decision is provisional, since there always is an option to continue and combine the results with a new study. 
For better or worse, this is the paradigm that researchers often have to work in, and within this paradigm they will inevitably be interested in the power for the experiment ahead. They will then plan for a certain sample size $n$ to achieve such power, with a minimal relevant effect size $\delta$ in mind. As long as the $\S$-variables themselves are chosen according to a GRO criterion, such a use of power as a `secondary' criterion used merely to determine sample size is consistent with the GRO approach.
In order for $\S$-variables to be embraced by practitioners, we would hope that the sample sizes required to achieve a certain desired power  with GRO-$\S$-variables would be competitive with the standard approach based on Neyman-Pearson tests. We now study whether this is the case. 
For simplicity we only consider the Gaussian location model of Example~\ref{ex:normal}, where $\cH_0$ is the standard normal $N(0,1)$ and $\cH_1$ the set  $\{p_{\mu} : \mu \in \Theta\}$ of normals with variance $1$. All results readily generalise to  1-dimensional exponential families.

\paragraph{Power: planning for a Fixed $n$}
For comparison, recall that a standard one-sided NP test at level $\alpha$ would reject if $\hat\mu \geq z_{\alpha}/\sqrt{n}$ with $z_{\alpha}$ the $(1-\alpha)$-quantile of the standard normal with  $z_{0.05} = 1.645, z_{0.01} = 2.33$.  
By standard calculation (see Appendix~\ref{app:brown}), under an alternative with mean  $\geq \delta$, the sample size needed with this test to get power at least $1-\beta$ satisfies $n_{\textsc{np}} =  C_{\textsc{np}}/\delta^2$ with $C_{\textsc{np}} =(z_{\alpha} + z_{\beta})^2$; for $\alpha = 0.05, \beta = 0.2$ we get 
$C_{\textsc{np}} \approx 6.180$. 
For the same $\Theta_1 = \{\mu: \mu \geq \delta\}$,   we can also calculate the sample size needed to get power $1-\beta$ using the GROW $\S$-variable of Example~\ref{ex:ponential}. If we use a fixed sample size $n$, we reject if $\log p_{\delta}(Y^n)/p_0(Y^n) \geq -\log \alpha$. By a simple calculation, for $\alpha > 1/2$, the smallest $n$ at which we have power  at least $1-\beta$, is given by setting
\begin{equation}
\label{eq:competition}
n_{\textsc{grow-fixed}} =  2 \cdot\frac{ - \log\alpha}{\delta^2} \cdot 
\left(1 + \frac{z_{\beta}}{
	z_{\alpha}}
\right)^2 = 
c_{\alpha} n_{\textsc{np}} \text{\ with\ } c_{\alpha} = \frac{2 \cdot ( - \log\alpha)}{z_{\alpha}^2}.
\end{equation}

\noindent
We have $c_{0.05} \approx 2.2$; $c_{0.01} \approx  1.7$ and $c_{\alpha}$ very slowly converges to $1$ in the limit $\alpha \downarrow 0$: 
up to a constant factor of about two we need the same amount of data as in a classical approach, and the width of the induced confidence interval is of the same order. 
We can therefore choose a GROW $\eval^*$ that is
qualitatively more similar to a standard NP test than a standard Bayes
factor approach. Using instead a standard Bayesian prior $W_1$ on $\Theta_1$ with the $W_1$-GRO $\S$-variable has the advantage of not needing to specify any $\delta$ in advance, but  the number of samples to get power $1-\beta$ is larger by a logarithmic factor (Appendix~\ref{app:brown}).
\begin{figure}[]
	\centering
	\includegraphics[width=0.4\textwidth]{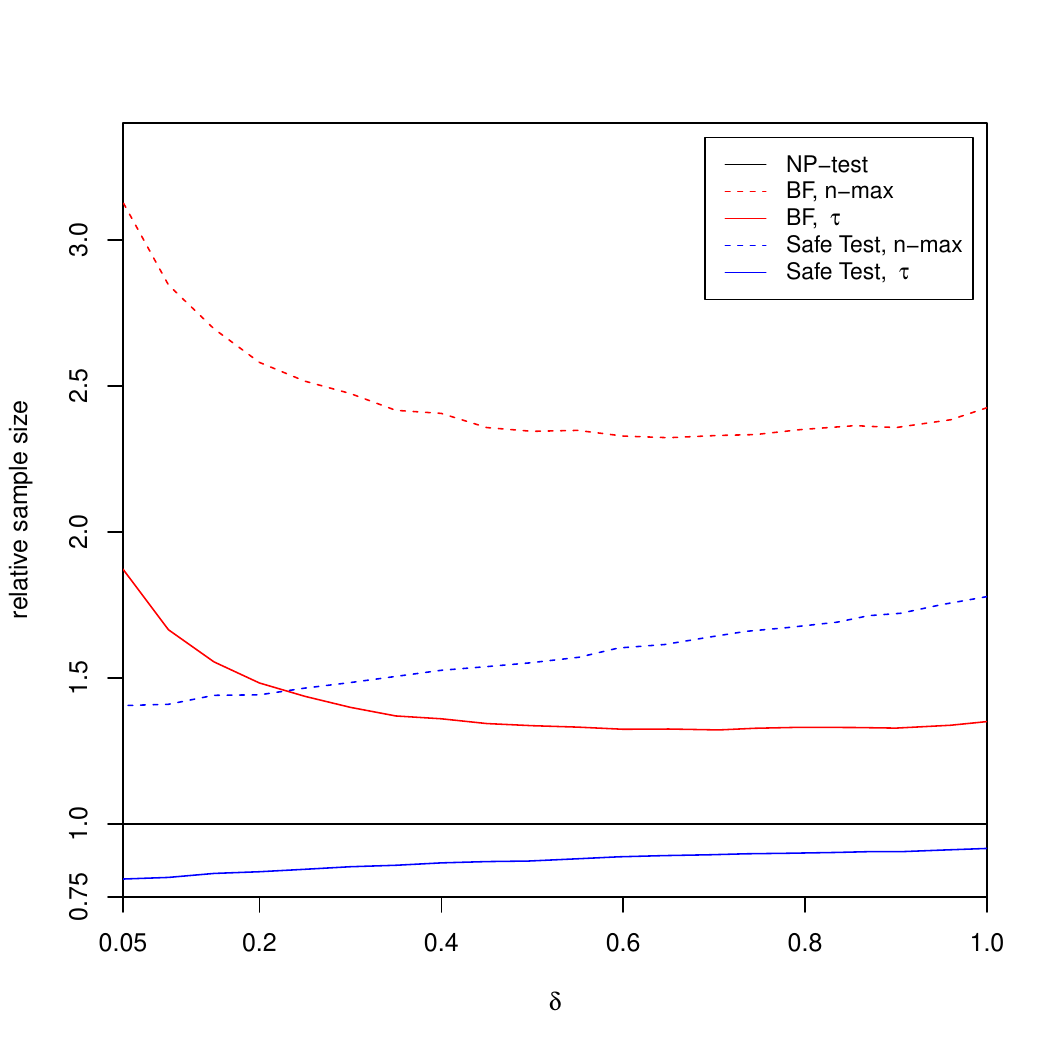}
	\caption{Factor of additional data needed to obtain a power of $1- \beta = 80\%$ compared to a Neyman-Pearson $z$-test, as a function of effect size (mean)  for the Bayesian $\S$-variable as in Example~\ref{ex:normal} with normal prior and the GROW $\S$-variable with minimum relevant effect size $\delta$ as in Example~\ref{ex:ponential} under optional stopping,  both in expectation ($\tau_{0.8}$) and in  worst-case ($n_{\max}(0.8)$) (for very small and large $\delta$, the normal prior we used for the Bayesian $\S$-variable may not be the prior of choice, but the results are representative for other priors one might use at $\delta \approx 0.5$).}\label{fig:samplesizes}
\end{figure}

The evidential target growth and the maximal power approach are not contradictory: for any particular choice of $\alpha$ and $\beta$, there is a choice of $L$ such that the planned-for sample sizes become the same function of $\delta$ (but of course the $L$ resulting from $\alpha = 0.05, \beta = 1-0.8$ will be just as arbitrary as these choices were in the first place). 

\paragraph{Power with Optional Stopping --- a Tragedy of the Commons?}
No matter the considerable advantages of being safe under optional continuation, the factor of about $2$ of extra data needed to get a desired power might scare away practitioners from adopting the $\S$-variable approach. 
The situation changes completely once one adopts optional stopping. As we saw in the previous section, many testing problems allow us to use $\S$-variables that remain safe under optional stopping --- and we can use the most aggressive stopping rule  that stops as soon as either $E_n \geq - \log \alpha$ (and we reject) or a pre-set maximum $n_{\max}$ is achieved (and we reject if $E_{n_{\max}} \geq - \log \alpha$ and otherwise accept). A simple but quite accurate approximation of the resulting stopping time $\tau_1$ for i.i.d.\ data in the GROW setting of Example~\ref{ex:ponential}, when setting $n_{\max}$ to $\infty$ is given by using Wald's equality in a manner first set out by \cite{breiman1961optimal}; \cite{TerschurePLG21} give details.
It gives for data  $Y_1, Y_2, \ldots \sim P_{\delta}$, that  $\Exp_{P_{\delta}}[\tau_1] \approx (- \log \alpha)/D(P_{\delta}\| P_0)$ with $D(P_{\delta} \| P_0)$ the KL divergence for a single outcome. For the Gaussian location family $D(P_{\delta} \| P_{0}) = \delta^2/2$, and we get
$\Exp_{P_{\delta}}[\tau_{1}] \approx 2 (- \log \alpha)/\delta^2$. 
Comparing to (\ref{eq:competition}), this gives that with $n_{\max} = \infty$ (so that the power of our test is $1$), the expected stopping time will  be in fact already smaller than the fixed stopping time we get with the Neyman-Pearson approach at power $1-\beta$ set to $0.8$. In practice we will choose $n_{\max}(\beta)$ to be the smallest number so that the overall procedure has power $1-\beta$, resulting in a stopping time $\tau_{\beta} = \min \{ \tau_1, n_{\max}(\beta)\}$. 
The expected stopping time is then really even smaller. Figure~\ref{fig:samplesizes} demonstrates this for the Gaussian location family. As the figure illustrates for the case $\beta = 0.8$, we have  $n_{\max}(\beta) = C_{\beta,\delta}/\delta^2$ and $\Exp[\tau_{\beta}] = C'_{\beta,\delta}/\delta^2$ for $C_{\beta,\delta}$ and $C'_{\beta,\delta}$ that remain within constant bounds as $\delta$ varies.  
We can in fact heuristically derive analytic expressions (integrals) for the limits $C_{\beta,0}$ and $C'_{\beta,0}$ for $\delta\downarrow 0$ by rescaling the log-likelihood process $(\log p_{\delta}(X^n)/p_0(X^n))_{n}$ to become compatible with a Brownian motion with drift, see Appendix~\ref{app:brown}. These give, for $\beta = 0.2$, $C_{\beta,0}= 8.5936$ and $C'_{\beta,0} = 4.971$ in accordance with Figure~\ref{fig:samplesizes}. 
Experiments with a logrank test \citep{TerschurePLG21}, $t$-test (in the vignette of $R$ package \citep{LyT20}) and  $2 \times 2$ tables \citep{TurnerLG21} all confirm the picture that arises from Figure~\ref{fig:samplesizes}: with $\S$-variables based on optional stopping one needs on average {\em less\/} data to achieve a certain desired power, but one needs to prepare for {\em more\/} data in the worst-case. 
Taking stock, we can conclude that if current standard null hypothesis tests were replaced by $\S$-value-based tests, and the standard practice to determine study sizes were replaced by the one above, and the percentage of studies in which the alternative is true is not too small, the world would need on average  about the same or even a bit less data than it does now, to reach substantially more robust conclusions and better meta-analyses. Yet --- at least as long as scientists insist on power and significance requirements  ---  each individual study would have to {\em plan\/}  for substantially more data, giving researchers an incentive not to adapt these new methods. We see this {\em Tragedy of the Commons\/} as one of the biggest obstacles for uptake of $\S$-variables in practical settings.

\section{Earlier and Related Work}
\label{sec:related}
\paragraph{$\S$-Variables, Test Martingales, Information Projections, General Novelty}
As seen in Section~\ref{sec:os}, $\S$-variables are the building blocks of {\em test (super-) martingales}, 
which go back to \cite{Ville39}.  E-variables themselves have
probably been originally introduced by Levin (of $P$ vs $NP$ fame)
(\citeyear{Levin76}) (see also \citep{gacs2005uniform}) under the name {\em
	test of randomness}, but Levin's abstract context is quite different
from ours. Independently discovered by \cite{zhang2011asymptotically} (under the name {\em PBR (prediction-based ratio)\/})
they were later analyzed by
\cite{shafer2011test} (calling them, with hindsight confusingly, {\em Bayes factors\/}); \cite{ShaferV19,VovkW21} ({\em e-variables/values\/}) and \cite{Shafer19} ({\em bets/betting scores\/}) --- we originally called them {\em $S$-values\/} ourselves. The literature seems to converge to $\S$-variables and -values. Here the $\S$  may either stand for evidence or for expectation.

Test martingales themselves have been thoroughly investigated by 
\cite{shafer2011test,ShaferV19}. They themselves underlie AV
(anytime-valid) $\p$-values \citep{johari2015always}, AV tests (which we call `tests that are safe for optional stopping')
and AV confidence sequences. The latter were recently developed in
great generality by A.\ Ramdas and collaborators; see
e.g.\ \citep{balsubramani2016sequential,howard2018uniform}.
Both AV tests and confidence sequences have first been developed by
H.\ Robbins and his students
\citep{darling1967confidence,lai1976confidence,robbins1970statistical}.
Like we do for $\S$-variables, Ramdas et al.\ (and also
e.g.\ \cite{pace2019likelihood}) stress the promise of  AV notions
for a safer kind of statistics that is significantly more robust
than standard tests and confidence intervals.

Just like regular tests can be turned into confidence intervals by
varying the null and `inverting' the resulting tests, AV confidence sequences can be created by starting with a collection of test
martingales, one for each null, and then varying the null (it is sometimes claimed that problematic aspects of  null hypothesis testing are mostly due to the very idea of a `null hypothesis' or a significance level 
	\citep{Cumming12,McShane19}. Without wanting to take sides in this issue, we note that standard confidence intervals are just as unsafe under optional continuation as standard 
	Neyman-Pearson hypothesis tests). The work on AV tests and confidence
sequences is therefore very similar in spirit to ours, with our work stressing
analysis at the level of batches of data rather than individual data points, and with the AV work bringing out the difference to Bayesian approaches more explicitly (AV $1-\alpha$-confidence intervals are typically wider than Bayesian $1-\alpha$-credible sets).  
In fact we do not claim any real novelty for
the `safe' or `AV' setting per se:  the real novelty of this paper is in the four versions of 
Theorem~\ref{thm:maina}. As far as we know, these results are new, with the exception of a  special case of the simplest version of Theorem~\ref{thm:maina} (Section~\ref{sec:GRO}): the case of discrete outcome spaces, simple $\cH_1$ and convex $\cH_0$ was already formulated and
proved
by \cite{zhang2011asymptotically}. Theorem~\ref{thm:maina} heavily builds on properties of standard- and reverse and joint information projections about which there is a rich literature, key references being \cite{Csiszar75,Topsoe79,CsiszarT84,csiszar2003information}. Both the standard and the reverse information projection are special cases of {\em $F$-divergence projections}, investigated in great detail in both the information-theoretic community \citep{Rueschendorf84} and the mathematical finance community \citep{Gundel05,FollmerG06}; the robust optimization problems in the latter two papers, when instantiated to logarithmic utility, bear some resemblance to our GROW criterion. While the details and the motivation are quite different, it would be of interest to study the connections further.

\paragraph{Relation to Sequential Testing}
{\em Sequential testing\/} \citep{Lai09,siegmund2013sequential}, pioneered by \cite{Wald47} and
Barnard, is
mathematically very similar to, but conceptually quite different from, testing based on test martingales and
(therefore) $\S$-variables.  Sequential tests are made for streams of data $Y_i, Y_2,\ldots$ as in Example~\ref{ex:oc} and Section~\ref{sec:GROanalysis} and are
based on random
processes $(S_i)_{i \in \naturals}$ such that, for each $i$,  $S_i$ is a conditional $\S$-variable given $Y_1, \ldots, Y_{i-1}$ under $\cH_0$, and $1/S_i$ is a conditional $\S$-variable given $Y_1, \ldots, Y_{i-1}$ under $\cH_1$.  
Of course, this two-sided $\S$-variable property only holds in quite special cases --- roughly under the same conditions as Proposition~\ref{prop:dominance} (monotone likelihood ratio), i.e.\ in our Example~\ref{ex:ponential} and for the $t$-test with point prior on $\delta^+, \delta^-$. 
In such a setting, the sequential test
based on $S_1, S_2, \ldots$ with prespecified parameters
$\alpha,\beta$ proceeds by calculating $S_1, S_2, \ldots$ and stopping
at $\tau^*$, the smallest $\tau$ at which either $S_{\tau} \geq
(1-\beta)/\alpha$ (`accept') or $S_{\tau} \leq (1-\alpha)/\beta$
(`reject'). Wald showed that this test has Type I error probability
bounded by $\alpha$ and Type II error bounded by $\beta$. The reason one
can stop at a smaller threshold ($(1-\beta)/\alpha$ rather than $1/\alpha$) is that one {\em has\/} to stop at
$\tau^*$, Thus, the method does not allow for optional stopping in our
sense: conceptually, sequential tests were designed for special, pre-specified stopping times. 
Still, much work in sequential testing
can be re-cycled to obtain test martingales and $\S$-values --- but not always vice versa since  $\S$-variables are often not `two-sided'.  

\paragraph{Related Work on Relating $\p$-values and $\S$-variables}
\cite{shafer2011test} and \cite{ShaferV19}  give a general formula for {\em
	calibrators\/} $f$ (see also \cite{VovkW21} and  \cite{vovk93,sellke2001calibration} for early work in this direction). These are decreasing functions $f: [0,1]
\rightarrow [0,\infty]$ so that for any $\p$-value $\pval$, $\eval\coloneqq f(\pval)$ is an
$\S$-variable. The choice of any such calibrator is essentially arbitrary, but, following \cite{Shafer19}, let us consider one that is especially simple: $f(p) = 1/\sqrt{p} - 1$. For example, for any calibrator $f$ suggested for practice, 
rejection under the $\S$-variable based test with significance level $\alpha = 0.05$,
so that $\eval \geq 20$, would then correspond to reject only if $\pval
\leq f^{-1}(20) = 1/441=0.0023$, 
requiring a substantial amount of additional data for rejection under
a given alternative.  Note that the $\S$-variables we developed for
{\em given\/} models in previous sections are more sensitive than such
generic calibrators though. For example, consider the normal location family of the previous section.
With the calibrator above, we would reject if $\hat\mu \geq  z_{0.0023}/\sqrt{n} \approx 2.8/\sqrt{n}$. The amount of data to plan for to obtain power $80\%$ would then be $\approx (2.8/1.65)^2 n_{\textsc{np}} \approx 3.0 n_{\textsc{np}}$, whereas for the $\S$-value based on the normal likelihood ratio we would need $\approx 2.2 n_{\textsc{np}}$, and even significantly less under optional stopping.
\section{GRO: Discussion and Open Problems}
\label{sec:GROdiscussion}
In this paper we provided several motivations for our various GRO criteria as we introduced them, in Section~\ref{sec:GRO}--\ref{sec:regrow}. Here we reflect on the strength of our arguments in sequence-of-studies settings when taking the running products of the per-study GRO e-variables.
Let us first consider a simple alternative $Q$ as in Section~\ref{sec:grosimple}, so that GROW and REGROW criteria coincide with GRO.
In an optional continuation (OC) context, 
the justification of GRO is strongest if all study outcomes ${\bf Y}_{(1)}, {\bf Y}_{(2)}, \ldots$ are independent
and the variances of $\log E_{(m)}$ under $Q$ for the GRO e-variables $E_{(m)}$ are not too large.  As pointed out by a referee, if these variances are large, there could be prolonged periods of `draw-downs'  --- in the gambling interpretation, independence ensures, by the law of large numbers, that asymptotically the GRO e-variables maximize our capital; but if there is high variance, there may be several studies in a row during which the product of all e-values so far remains low, a fact well-known among economists. 
Moreover, i.i.d.\ data and the seqdec property are required for another central justification of GRO in the optional stopping setting, namely Breiman's insight that the expected stopping time before one can reject is minimized by maximizing expectation of the log capital (\cite{TerschurePLG21} explain this in detail).
%
This issue may perhaps in some cases be resolved by instead of adopting the `global' GRO e-variable (among all e-variables for the null), taking an alternative e-variable that is GRO among a subset of e-variables with sufficiently low variance under $Q$. Such low variance e-variables would presumably look quite different from likelihood ratios and Bayes factors though, since for given $\Theta_0$, the only e-variable that can be obtained by projecting on $\cW(\Theta_0)$ is the unconstrained GRO e-variable:
e-variables that are optimal in a non-GRO sense cannot be obtained by projection, even if KL is replaced by another distance or divergence $D'$: Corollary~\ref{cor:justone} implies that, with $W^*_0 = \arg \min_{W) \in \cW(\Theta_0)} D'(Q; P_W)$, we have that $E'= q(\bY)/p_{W^*_0}(\bY)$  {\em does not\/} give an E-variable unless $W^*_0$ also minimizes the KL divergence, i.e.\ if $E'$ is an E-variable at all, it must be GRO.
\commentout{
%

Even assuming a situation in which, if the alternative were restricted to any simple $Q= P_{\theta_1}$, GRO would be appropriate, there may still be alternatives to GROW and REGROW that are sometimes preferable. For example, if a study is known to have a fixed sample size $n$, i.e.\
$\bY = (Y_1, \ldots, Y_n)$ and we have a certain target growth $L$ (Section~\ref{sec:competitive}) in mind, we may consider the subset $\Theta_{E,L}$ of $\Theta_1$ for which we can expect to achieve the target $L$ based on e-variable $E$. 
That is, $\Theta_{E,L} = \{\theta_1 \in \Theta_1: \Exp_{\bY \sim P_{\theta_1}}[\log E] \geq L\}$. 
In some cases, there exists an e-variable $E^{\circ[n]}$ that is optimal in the sense that $\Theta_{E^{\circ[n]},L}$ is a superset of $\Theta_{E,L}$ for any other e-variable E. 
It is straightforward to show that this happens  if $\{P_{\theta}: \theta \in \Theta\}$ represents a regular 1-dimensional exponential family, $\Theta_1 = \{\theta \in \Theta: \theta > \theta_0\}$ and $\Theta_0 = \{\theta_0\}$ for some $\theta_0 \in \Theta$ and we take $E^{\circ[n]} := p_{\theta^{\circ[n]}}(\bY)/p_{\theta_0}(\bY)$ for $\theta^{\circ[n]}$ such that $D(P_{\theta^{\circ[n]}} \| P_{\theta_0}) = L$ (note that $\theta^{\circ[n]}$ depends on $n$ because the KL is between distributions on $\bY = (Y_1, \ldots, Y_n)$. There may be good reasons to adopt $E^{\circ[n]}$  --- we can expect $E^{\circ[n]}$ to exceed $L$ under a strict superset of $\Theta_1$ than for any other e-variable including the REGROW $E^*$.  This suggests that if we aim to do a safe test at level $\alpha$, i.e.\ we aim to reject the null if $\log E \geq L:= - \log \alpha$, then setting $E:= E^{\circ[n]}$ would have high power under a larger subset of $\Theta_1$ than the REGROW $E^*$ (this can in fact be shown formally by noticing that $E^{\circ[n]}$ is Johnson's \citeyear{johnson2013uniformly} {\em uniformly most powerful Bayes factor\/} for level $\alpha$ and sample size $n$).

Extending this idea to streams $Y_1, Y_2, \ldots$ for which we insist on an e-value specification that is seqdec (so that we can engage in OS, Section~\ref{sec:GROanalysis}), we may aim for an e-variable specification $E_1, E_2, \ldots$  so that $\Exp_{Y^n \sim P_{\theta_1}}[\log E_i]$ exceeds $-\log \alpha$ (and we can reject) at a small $n$ for a large subset of $\Theta'_1$. As in Example~\ref{ex:jeffreys}, for simple null the problem is analogous to a problem in universal data compression, suggesting that this goal is not achieved by the REGROW e-variable} 

In case $\cH_1$ is composite, then even if the ${\bf Y}_{(j)}$
are independent and the variances are low, there may be alternatives to GROW and REGROW that are sometimes preferable. For example, if we are in an OS setting and we use our tests as the basis for always-valid confidence intervals (see the previous section), we may want to aim for the conditional e-variables that lead to the  confidence intervals  that shrink to $0$ at the fastest possible rate, which for regular 1-dimensional parameters of interest is usually $O((\log \log n)/n)$ \citep{howard2018uniform}.  
These are not obtained by REGROW e-variables (which, by extending the reasoning of Example~\ref{ex:jeffreys}, in parametric problems achieve a width of $O((\log n)/n)$). The narrower  $O((\log \log n)/n)$ can be achieved by the {\em switching\/} strategies of
 \cite{erven2012catching} or the {\em stitching\/} method used to design test martingales by \cite{howard2018uniform}. 
A more precise understanding of whether such methods can also be re-understood as optimizing a variation of GRO, and more generally what meta-GRO criteria are reasonable at all, and in what situations, is needed. This includes the question of when any variation of GRO automatically provides seqdec, or close-to-seqdec specifications, allowing us to engage in OS (Example~
\ref{ex:regrow}). Answering these questions is a major avenue for future research: it ultimately determines how widely applicable  GRO criteria really are. 
\section{Could {F}isher, {J}effreys and {N}eyman Have Agreed on a {\em Curren-} {\em cy\/} for Testing?}
\label{sec:theory}
The three main approaches towards null hypothesis testing are
Jeffreys' Bayes factors, Fisher's $\p$-value-based testing and
the Neyman-Pearson method. In the paper {\em Could Fisher, Jeffreys and Neyman Have Agreed on Testing?}, \cite{Berger03} noted that, while these
methodologies seem superficially highly contradictory, there
exist methods that have a place within all three. The developments in this paper lead to the conclusion that $\S$-variable based testing --- although it differs in some technical
respects from Berger's proposals --- is very much in the same spirit:

Concerning the {\em Neyman-Pearson approach\/}: $\S$-variables lead to
tests with Type-I error guarantees at any fixed significance level
$\alpha$, which is also the first requirement of a Neyman-Pearson test --- requiring safety under optional continuation or optional stopping simply enforces the requirement to hold over a non-pre-specified sequence of studies, which is a natural requirement in scientific applications.
Since there is then no single study any more, the concept of `power' looses its centrality (and may be upgraded to requiring power one), and growth-rate optimality is a natural quantitative refinement. The fact that a high growth-rate corresponds to a high value of $\Exp_{P_1}[\log \eval]$ under $\cH_1$ whereas a high power corresponds to a high probability that $P_1(\log \eval \geq - \log \alpha)$ also shows that GRO and power remain intimately connected in $\S$-variable theory as well.

Concerning the {\em Fisherian approach\/}: here, $\p$-values are interpreted as indicating amounts of evidence against the null, and their definition does not need to refer to any specific alternative $\cH_1$.
Exactly the same holds for $\S$-values: the basic interpretation `a large $\S$-value provides evidence against $\cH_0$' holds no matter how the  $\S$-variable is defined, as long as it satisfies (\ref{eq:basic}). 
If they are defined relative to $\cH_1$ that is close to the actual process generating the data they will grow fast and provide a lot of evidence, but the basic interpretation holds regardless. In contrast to evidence based on standard $\p$-values however, (a) $\S$-based evidence has a  concrete additional interpretation in terms of money (the higher $\eval$, the more money one has gained in a game that is not favourable under the null); (b) it remains valid under optional continuation, and (c) unlike the $\p$-value, it is compatible with (provides the same evidence as) likelihood ratios do in simple-vs.-simple testing --- the one case where the use of likelihood ratio as evidence is standard.

Concerning the {\em Bayesian approach\/}: despite their monetary
interpretation, {\em all\/} $\S$-variables that we encountered can also
be written as Bayes
factors, and Theorem~\ref{thm:maina} strongly suggests that this
is a very general phenomenon. Subjective prior knowledge can be accounted for using the $W_1$-GRO $\S$-variable (Section~\ref{sec:BayesGRO}), whereas maximin optimal GROW and REGROW $\S$-variables sometimes correspond to `objective' Bayes approaches based on Jeffreys' and/or right-Haar priors. Still, there seem to be two fundamental differences: first, in a standard Bayesian analysis, one would require error guarantees and safety under OC \emph{ under the prior\/} instead of under all $P \in \cH_0$, and second, one would insist on using full, standard likelihoods --- whereas $\S$-variables may also be based on partial \citep{TerschurePLG21} or Dawid's (\citeyear{Dawid97}) prequential \citep{waudbysmith2021estimating} likelihoods rather than full likelihoods --- which then however may be combined with priors (on $\cH_1$) after all. Even though we emphasise Type-I error safety throughout, because of this generic freedom in using priors on $\cH_1$ the link to Bayesian methods remains close.

\paragraph{The Dream} With the massive criticisms of $\p$-values in
recent years, there seems to be growing consensus that, in the context of hypothesis testing, $\p$-values should either not be used at all, or at least, with utter care
\citep{wasserstein2016asa,benjamin2018redefine}. Yet otherwise, the
disputes among adherents of the three schools continue.
For example, some
highly accomplished statisticians reject the idea of testing without a
clear alternative outright; others say that such goodness-of-fit tests
are an essential part of data analysis. Some insist that significance
testing (with binary decisions) should be abolished altogether \citep{amrhein2019scientists},
others (perhaps slightly cynically) acknowledge that significance may
be silly in principle, yet maintain that journals and conferences will
always require a significance-style `bar' in practice and that therefore such
bars should be made as meaningful as possible. Finally, within the
Bayesian community, the Bayes factor is sometimes presented as a
panacea for most testing ills, while others  warn against its use, protesting, for example, against claims that Bayes factors can `handle optional stopping' \citep{HendriksenHG18}.
\textit{ Wouldn't it be nice if all these accomplished
	but disagreeing people could continue to go their way, yet would
	have a common language or `currency' to express amounts of evidence,
	and would be able to combine their results in a meaningful way?}
This is what $\S$-variables can provide: consider three tests with the
same null hypothesis $\cH_0$, based on samples $\Ydata_{(1)}$, $\Ydata_{(2)}$
and $\Ydata_{(3)}$ respectively.  The results of a GROW $\S$-variable test
aimed to optimise power on sample $\Ydata_{(1)}$ for $\delta \geq \delta^+$, an $\S$-variable test for
sample $\Ydata_{(2)}$ based on a Bayesian prior $W_1$ on $\cH_1$ and a
Fisherian $\S$-variable test in which the alternative $\cH_1$ is not
explicitly formulated, can all be multiplied --- and the result will
be meaningful, both in terms of monetary gain and in terms of error probability.

\paragraph{Acknowledgments} Many thanks to  
A.\ Barron, J.\ Berger, 
R.\ Frongillo, P.\ Harremo\"es, A.\ Hendriksen, A.\ Henzi,
A.\ Ly, R.\ Meester, M.\ Perez, A.\ Ramdas, J.\ ter Schure, 
G.\ Shafer, R.\ Turner, V.\ Vovk, B.\ Wagonner,  J.\ Ziegel who all made helpful remarks.



\FloatBarrier

\DeclareRobustCommand{\VANDER}[3]{#3}
\bibliography{safetestbib}



\newpage
\appendix
\DeclareRobustCommand{\VANDER}[3]{#2}

{\bf \LARGE Appendix}

\section{Theorem~\ref{thm:maina}, Corollaries and Conditions}

Here we prove Theorem~\ref{thm:maina} and its Corollary~\ref{cor:justone}. We also, in Appendix~\ref{app:regularityconditions}, 
discuss the required regularity conditions for Theorem~\ref{thm:maina} and we prove that they are applicable in all our examples.
\subsection{Proof of Theorem~\ref{thm:maina}, Simplest Version, and Corollary~\ref{cor:justone}}
\label{app:theorem1simpleproof}
The proof of the first version of Theorem~\ref{thm:maina} relies on quite technical results from \cite{Li99}, but if the minimum in (\ref{eq:firstgro}) is achieved by some prior $W^*_0$, and under the further condition that we can exchange differentiation and expectation, then the partial and crucial result that (\ref{eq:firsteval}) is an $\S$-variable has a very simple proof, which we provide first as a `warm-up': evaluate the derivative $f(\alpha) = (d/\alpha) D(Q \| (1-\alpha) P_{W^*_0} + \alpha P_{\theta})$ at $\alpha = 0$ for arbitrary $\theta \in \Theta_0$ and note that it is $\geq 0$ iff $\Exp_{\bf Y \sim Q}[p_{\theta}({\bf Y})/p_{W^*_0}({\bf Y})] = \Exp_{\bf Y \sim P_{\theta}}[q({\bf Y})/p_{W^*_0}({\bf Y})] \leq 1$. Differentiating again gives that $f(\alpha)$ is convex, and the result follows from convexity of $\cP_0 := \{ P_W: W \in \cW(\Theta_0) \}$. 
\newcommand{\Qdist}{\ensuremath{Q}}
\newcommand{\Qdens}{\ensuremath{q}}

We proceed to give the complete and fully general proof. 
Note that $\cP_0$ is convex, and (by assumption of the theorem) every distribution in $\cP_0$ as well as $Q$ has a density relative to $\mu$ and $\inf_{P \in \cP_0} D(Q \| P) < \infty$. These three givens allow us to use a range of results about the reverse information projection (RIPr) established in the Ph.D.\ thesis \citep{Li99} (additional proofs of   (extensions of) all of Li's results we need below can be found in the refereed paper \cite{GrunwaldM19}).

First, the existence and uniqueness of a measure $P^*_0$ (not
necessarily a probability measure) with density $p^*_0$ that
satisfies $D(Q \| P^*_0) = \inf_{P \in \cP_0} D(Q \| P)$ (i.e.\ it is the RIPr), and furthermore has the property
\begin{equation}\label{eq:notyetreversed}
\text{for all $p$ that are densities of some $P \in  \cP_0$:}\ \  
\Exp_{\Ydata \sim Q} 
\left[\frac{p(\Ydata)}{p^*_0(\Ydata)}\right] \leq 1,
\end{equation}
follows directly from \cite[Theorem 4.3]{Li99}.  But by writing
out the integral in the expectation explicitly we immediately see
that we can
rewrite (\ref{eq:notyetreversed})  as:
$$
\text{for all $P \in \cP_0$:}\ \  
\Exp_{\Ydata \sim P} 
\left[\frac{q(\Ydata)}{p^*_0(\Ydata)}\right] \leq 1.
$$ Li's Theorem 4.3 still allows for the possibility that $\int
p^*_0(y) \dif \mu (y) > 1$.  To see that in fact this is
impossible, i.e.\ $p^*_0$ defines a (sub-) probability
density, use Lemma 4.5 of \cite{Li99}. This shows
that $\eval^*=q(\bY)/p^*_0(\bY)$ is an $\S$-variable, and (using that $P^*_0$ is the RIPr)
the second and third equality of (\ref{eq:firstgro}).  The final line of the result (`if $\ldots$ then $P^*_0 = P_{W^*_0}$') follows directly from
Lemma 4.1 of \cite{Li99}.

It remains to show the first equality of (\ref{eq:firstgro}) and essential uniqueness of $\eval^*$. 
For the former, it is sufficient to show that for all $\S$-variables, i.e.\ all $\eval \in \cE(\Theta_0)$, 
\begin{align}
\label{eq:onlythingnotinli}
\Exp_{\Ydata \sim Q} \left[ \log \eval \right] \leq \Exp_{Q} \left[ \log \eval^* \right].
\end{align}
To show this, fix any $\S$-variable $\eval = e(\Ydata)$ in
$\cE(\Theta_0)$. Now further fix $\epsilon > 0$ and fix a  $W_{(\epsilon)} \in
\priorset(\Theta_0)$ with $D(Q \| P_{W_{(\epsilon)}}) \leq \inf_{W_0 \in \cW(\Theta_0)}  D(Q \| P_{W_0}) + \epsilon$. We must have, with $q'(y) \coloneqq e(y)
p_{W_{(\epsilon)}}(y)$, that $\int q'(y) \dif \mu = \Exp_{\Ydata \sim P_{W_{(\epsilon)}}} [\eval] \leq
1$, so $q'$ is a sub-probability density, and by properness of the log scoring rule, 
\begin{equation}
\Exp_{Q} [ \log \eval] = \Exp_{Q} \left[ \log \frac{q'(\Ydata)}{p_{W_{(\epsilon)}}(\Ydata)} \right]
\leq \\ \Exp_{Q} \left[ \log \frac{q(\Ydata)}{p_{W_{(\epsilon)}}(\Ydata)} \right] = D(Q \| P_{W_{(\epsilon)}}) \leq \inf_{W_0 \in \cW(\Theta_0)}  D(Q \| P_{W_0}) + \epsilon.  \nonumber \end{equation}
Since we can take $\epsilon$ to be arbitrarily close to $0$, (\ref{eq:onlythingnotinli}) follows.

To show essential uniqueness of $\eval^*$, let $\eval$ be any $\S$-variable with
$\Exp_{Q} [ \log \eval] = \Exp_{Q} [ \log \eval^*]$.  By
linearity of expectation, $\eval'= (1/2) \eval^* + (1/2)\eval$ is then also
an $\S$-variable, and by Jensen's inequality applied to the logarithm we
must have $\Exp_{Q} [ \log \eval'] > \Exp_{Q} [ \log
\eval^*]$ unless $Q(\eval = \eval^*) = 1$. Since we have
already shown that for any $\S$-variable $\eval'$, $\Exp_{Q} [ \log
\eval'] \leq \Exp_{Q} [ \log \eval^*]$, it follows that
$Q(\eval \neq \eval^*) = 0$. But then, by our assumption that $Q$ has full support, i.e.\ $q({\bf Y}) > 0$ hold $\mu$-almost everywhere, we must have that $P_{\theta}(\eval \neq \eval^*) = 0$ for all $\theta \in \Theta$, so $\eval^*$ is
essentially unique. 

\paragraph{Proof of Corollary~\ref{cor:justone}}
Let $W_0$ be as in the corollary statement. By definition of $\eval^*$ as in Theorem~\ref{thm:maina}, simplest version, and then using strict convexity of the KL divergence in its second argument \citep{CoverT91} and the fact that $D(Q \| P)$ is minimised, over $P\in \{P_W : W \in \cW(\Theta_0) \}$, we have: 
$$
\Exp_{Q}\left[ 
\log \eval^* \right] = 
D(Q \| P^*_0) <
D(Q \| P_{W_0}) = \Exp_{Q}\left[ 
\log \frac{q}{p_{W_0}}\right]
$$
so that, if ${q}/{p_{W_0}}$ were an $\S$-variable, we would have a contradiction with the first equality in  (\ref{eq:firstgro}). 

\subsection{Proof of full version of Theorem~\ref{thm:maina}}
\commentout{
	that the minimum is achieved uniquely.  Suppose that the minimum is achieved for some $(W^*_0,W^*_1)$ and some $(W'_0,W'_1)$. From \cite[Theorem 11]{ErvenH14} we have that 
	the KL divergence 
	$$D(P_{(1-\alpha){W^*_1} + \alpha W'_1} \|
	P_{(1-\alpha) W^*_0 + \alpha W'_0}) = 
	D(( 1-\alpha)P_{W^*_1} + \alpha P_{W'_1} \|
	(1-\alpha) P_{W^*_0} + \alpha P_{W'_0})
	$$ 
	is strictly convex on $0 \leq \alpha \leq 1$ unless
	$p_{W^*1}/p_{W^*_0} = p_{W'_1}/p_{W'_0}$ almost surely under the underlying measure. If this is the case we have essential uniqueness so we are done. If not, then we may assume  strict convexity of $D$ and we will show that this leads to a contradiction: by  strict convexity of $D$ and linearity of expectation $\Exp_{\theta}$,  
	\begin{align*}
	& D(P_{(1-\alpha){W^*_1} + \alpha W'_1} \|
	P_{(1-\alpha) W^*_0 + \alpha W'_0}) + \Exp_{\theta \sim 
		(1-\alpha){W^*_1} + \alpha W'_1}[f(\theta)]
	<  \\
	& (1-\alpha)
	D(P_{W^*_1}  \|
	P_{W^*_0}) + \alpha D(P_{W'_1}  \|
	P_{W'_0}) +  (1-\alpha) \Exp_{\theta \sim 
		W^*_1} [f(\theta)] + \alpha \Exp_{\theta \sim W'_1}[f(\theta)],
	\end{align*}
	which implies that $(W^*_1,W^*_0)$ is 
	not a minimum on $\cW_1 \times \cW_0$ which are both assumed convex --- so we have arrived at the desired contradiction. 
}
The proof consists of two sub-parts, Part (a) relying on the simple version of the theorem presented in Section~\ref{sec:GRO} and proven above (henceforth called `the simple theorem'), and 
Part (b) relying on a nonstandard minimax/saddle-point theorem from \cite{GrunwaldD04} (GD from now on), itself relying heavily on an earlier result from \cite{Topsoe79}.  

\textit{ Part (a).\/}
We first show that $\eval^f$ as in (\ref{eq:finale}) is an $\S$-variable. This follows by the simple theorem, with $\bV$ in the role of $\bY$, $Q$ in the theorem statement substituted by $P^{[\bV]}_{W_1^*}$ and $P_W$ for $W \in \cW(\Theta_0)$ replaced by $P^{[\bV]}_{W}$ and using that (\ref{eq:tuinslang}) implies that 
$\inf_{W_0 \in \cW_0(\Theta_0)} D(P^{[\bV]}_{W^*_1} \|
P^{[\bV]}_{W_0}) < \infty$. 
Next, we show that {\em if\/} (\ref{eq:jipr}) holds for $\eval^f$ as in (\ref{eq:finale}), then all $\S$-variables $\eval$ for which it holds must be essentially equal to $\eval^f$. To see this, suppose that $\eval$ is another $\S$-variable satisfying (\ref{eq:jipr}). Then we have
$$
\inf_{W \in \priorset_1} (\ \Exp_{\Ydata \sim P_W}[\log \eval] - f(W) 
\ ) = D(P^{[\bV]}_{W^*_1} \| P^{*[\bV]}_0)  - f(W^*_1) =  \Exp_{\Ydata \sim P_{W^*_1}}\left[ \log \eval^f \right]- f(W^*_1)
$$
as follows by writing out the definition of $D(\cdot \| \cdot)$. On the other hand, using the definition of $\inf$, we must have
$$
\inf_{W \in \priorset_1} (\ \Exp_{\Ydata \sim P_W}[\log \eval] - f(W) 
\ ) \leq \Exp_{\Ydata \sim P_{W^*_1}}\left[\log  \eval \right]- f(W^*_1).
$$
The only way these two displays can be reconciled is if 
$ \Exp_{\Ydata \sim P_{W^*_1}}\left[ \log \eval \right] \geq
\Exp_{\Ydata \sim P_{W^*_1}}\left[ \log \eval^f \right]$. But since $\eval^f$ is the RIPr of $P_{W_1^*}^{[\bV]}$, we can use the simple theorem again, applied with $P_{W_1^*}$ in the role of $Q$ and $\cH_1 = \{P_{W_1^*}\}$, to conclude that $\eval^f$ must be essentially equal to $\eval$. The final line of the fully general Theorem~\ref{thm:maina} follows again by reduction to the analogous statement of the simple theorem. 

Finally, we will use the simple theorem  to show the following one-sided version of (\ref{eq:jipr}): 
\begin{equation}\label{eq:jipronesided}
\inf_{W \in \priorset_1} (\ \Exp_{\Ydata \sim P_W}[\log \eval^f] - f(W) 
\ ) \leq 
\sup_{\eval\in \cE(\Theta_0)} \inf_{W \in \priorset_1} (\ \Exp_{\Ydata \sim P_W}[\log \eval] - f(W) \ ) \leq  \\   
D(P^{[\bV]}_{W^*_1} \| P^{*[\bV]}_0)  - f(W^*_1).
\end{equation}   
The first inequality is trivial since $\eval^f \in \cE(\Theta_0)$. The second follows if we can show that
\begin{equation}\label{eq:wringe}
\sup_{\eval\in \cE(\Theta_0)} \inf_{W \in \priorset_1} \Exp_{P_W}[\log \eval - f(W)] \leq 
\inf_{W \in \priorset_1} \inf_{W_0 \in \cW_0} (\ D(P_{W} \| P_{W_0}) - f(W_1) \ ) 
\end{equation}
and recognizing that by assumptions of the theorem, the right-hand side of (\ref{eq:wringe}) coincides with the right-hand side of (\ref{eq:jipronesided}). 
To prove (\ref{eq:wringe}), note that by the simple version of the theorem we already  have for each fixed $W_1 \in \priorset_1$ that 
$$\inf_{W_0 \in \cW_0} D(P_{W_1} \| P_{W_0}) = \sup_{\eval \in
	\cE(\Theta_0)} \Exp_{P_{W_1}} [ \log \eval]$$ and this directly implies
the inequality by adding $-f(W_1)$ to both sides and using a standard ``$\inf \sup \geq \sup \inf$'' argument
(the trivial side of the minimax theorem). 
The equality follows by assumption of the Theorem.

\textit{ Part (b).\/}
Taking stock, we see that the only thing that is left to prove is (\ref{eq:jipronesided}) with the reversed inequalities. For this, it suffices to show 
that 
\begin{equation}\label{eq:virus}
D(P^{[\bV]}_{W^*_1} \| P^{*[\bV]}_{0}) - f(W^*_1) \leq \inf_{W \in \priorset_1} \Exp_{P_W}[\log \eval^f - f(W) ].
\end{equation}
Since all
distributions occurring in (\ref{eq:virus}) are marginals on $\bV$, and
$\eval^f$ can be written as a function of $\bV$, we will from now on simply
refer to the marginal densities on $\bV$ corresponding to $P_{W}$ as
$p_W$ (rather than $p'_W$ as in the main text), and we will omit the
superscripts $[\bV]$ from $P$; thus we take as our basic outcome now $\bV$
rather than $\Ydata$.

\newcommand{\Wn}{\ensuremath{W_{\text{\sc n}}}}
\newcommand{\Wu}{\ensuremath{W_{\text{\sc u}}}}
\newcommand{\Wfin}{\ensuremath{\cW^{< \infty}}}
\newcommand{\pluspart}[1]{\ensuremath{\left[#1\right]_{+}}}
\newcommand{\minpart}[1]{\ensuremath{\left[#1\right]_{-}}}
We will show the stronger statement that (\ref{eq:virus}) holds with equality, by using a minimax/saddle point result that holds for general functions $L: \Theta_1 \times \cW_1 \rightarrow \reals \cup \{ \infty \}$ such that $L(\Wn,\Wu) := \Exp_{\theta \sim \Wn}[L(\theta,\Wu)]$ 
is well-defined for all $\Wn \in \cW_1$ (the condition `well-defined' is necessary since the expectation is over a function that may neither be bounded from below nor from above; see for example Section 3.1 of GD  for the (standard) definitions). These $L$ are interpreted as loss functions, with $\theta_1 \in \Theta_1$ denoting a state of nature and $\cW_1$ an arbitrary convex set of distributions on $\Theta_1$, each $W \in \cW_1$ being interpreted as an action.
Following  GD, we can associate a {\em decision-theoretic entropy\/} $H(\Wn) := \inf_{\Wu \in \cW} L_0(\Wn,\Wu) = L(\Wn,\Wn)$ with any such $L$. The following result holds for all $\Theta_1$, $\cW_1$ and $L$ as defined above but we will apply it to the instantiation of $\Theta_1$ and $\cW_1$ in Theorem~\ref{thm:maina}. 

\paragraph{GD's Theorem 6.3} Assume that (a) $L$ is a proper scoring rule, i.e.\ for all $\Wn \in \cW_1$, $H(\Wn) = L(\Wn,\Wn)$.  Suppose that (b)  $W_1^* \in \cW_1$ is `maximum entropy' i.e.\ $\sup_{\Wn \in \cW_1} H(\Wn) = H(W^*_1) < \infty$ and (c) the lower semi-continuity condition below holds. Then $(W_1^*,W_1^*)$ is a saddle-point relative to $L$, i.e.\
\begin{equation}\label{eq:entrophy}
H(W^*_1) = L(W^*_1,W^*_1) = \sup_{W \in \cW_1} L(W,W_1^*).
\end{equation}
\paragraph{Lower Semicontinuity Condition (GD's Condition 6.1)}
Let $(W_n)$ be a sequence of distributions in $\cW_1$ such that $H(W_n)$ is bounded below and such that $(W_n)$ converges weakly to some distribution $W^{\circ}$ on $\Theta_1$. Then $L(W^{\circ},\Wu)$ is well-defined for all $\Wu \in \cW$ and for all $W'\in \cW$, $L(W', W^{\circ}) \leq \lim \inf_{n \rightarrow \infty} L(W',W_n)$. 

We  now define the specific loss function to which we will apply the above theorem. 
$\cW_1$ and $\Theta_1$ are defined as in the statement of Theorem~\ref{thm:maina}. (\ref{eq:tuinslang}) implies that $D(P_{W_1^*} \| P^*_0) < \infty$
for some $P^*_0$ with density $p^*_0$. 
Similarly, $P_{W}$ must have some density $p_W$ under all $W \in \cW_1$. We can therefore define, 
using these densities,
\begin{align*}
L(\theta,\Wu) &  = 
\Exp_{\bV \sim P_{\theta}} \left[
- \log \frac{p_{\Wu}(\bV)}{p^*_0(\bV)}
- f(\theta)\right] \\
L(\Wn,\Wu) &  = 
\Exp_{\theta \sim \Wn}
\Exp_{\bV \sim P_{\theta}} \left[
- \log \frac{p_{\Wu}(\bV)}{p^*_0(\bV)}
- f(\theta)\right]. 
\end{align*}
Since $P_{\theta}$ has full support for all $\theta \in \Theta_1$, $P_{W_1^*}$ has full support and so $p^*_0(\bV) > 0$ a.s.\ under $P_{W_1^*}$ and hence under all $P_{\theta}$ with $\theta \in \Theta_1$. Similarly $p_W(\bV) > 0$ a.s.\ under all $P_\theta$ with $\theta \in \Theta_1$. Thus, the quantity inside the expectation is almost surely well-defined.
To see that the expectations are themselves well-defined (using standard definitions, see again Section 3.1 of GD), note that we can write
\begin{align*}
L(\Wn,\Wu) = & + \Exp_{\Wn} \Exp_{P_{\theta}} 
\pluspart{- \log \frac{p_{\Wu}}{p_{\Wn}}} + 
\Exp_{\Wn} \Exp_{P_{\theta}} 
\pluspart{- \log \frac{p_{\Wn}}{p^*_0}} -
\Exp_{\Wn} \Exp_{P_{\theta}} 
\pluspart{f(\theta)} \\
& 
- \Exp_{\Wn} \Exp_{P_{\theta}} 
\minpart{- \log \frac{p_{\Wu}}{p_{\Wn}}} - 
\Exp_{\Wn} \Exp_{P_{\theta}} 
\minpart{- \log \frac{p_{\Wn}}{p^*_0}} +
\Exp_{\Wn} \Exp_{P_{\theta}} 
\minpart{f(\theta)}
\end{align*}
with $\pluspart{x} := \max \{x,0\}$ and $\minpart{x} = \max\{ - x, 0 \}$. The expectation would be undefined iff there is both a term equal to $\infty$ and a term equal to $- \infty$ on the right. We will show that this is not the case. We assume 
$f(\theta)$ bounded, and, under our finite KL condition,   $D(P_{\Wn} \| P_{\Wu}) = \Exp_{\Wn} \Exp_{P_{\theta}} 
\pluspart{- \log \frac{p_{\Wu}}{p_{\Wn}}} - 
\Exp_{\Wn} \Exp_{P_{\theta}} 
\minpart{- \log \frac{p_{\Wu}}{p_{\Wn}}} < \infty$, so we only need to worry about the terms involving $p^*_0$. But these are also the positive and negative parts of a minus KL divergence, so  the full expectation is well-defined as a number in $\reals \cup \{-\infty\}$. 
The expectations are therefore welldefined and we can write
\begin{align}\label{eq:distrophy}
L(\Wn,\Wu) = D(P_{\Wn} \| P_{\Wu}) - D(P_{\Wn} \| P^*_0) - \Exp_{\theta \sim \Wn} [f(\theta)]
\end{align}
and analogously for $L(\theta,\Wu)$.

\paragraph{Applying GD's theorem to $L$}
We apply GD's theorem to the loss function $L$ above with $W_1^*$ as in the statement of the theorem. From (\ref{eq:distrophy}) we see that $L(W^*_1,W)$ is minimised, over $\cW_1$, by $W = W^*_1$ and then finite, so that GD's requirements (a)  and (b) hold for loss function $L$. We can now reason as follows. If the lower semicontinuity condition (c) also holds, then the theorem applies and (\ref{eq:entrophy}) implies, taking minus on both sides,
\begin{align*}
- L(W^*_1,W^*_1) = \inf_{W \in \cW_1} - L(W,W_1^*).
\end{align*}
which, rewriting the left-hand side using (\ref{eq:distrophy}) and the right-hand side using definition of $L$, is in turn seen to be equivalent to
(\ref{eq:virus}), and the desired result follows. 

It thus only remains to show that the lower semicontinuity condition holds.  
Using (\ref{eq:distrophy}) we can write  $H(\Wn)= - D(P_{\Wn} \| P^*_0) + f(\Wn)$ for all $\Wn \in \cW_1$. 
Take a sequence $(W_n)_n$ and $W^{\circ}$ as in the condition. Then $(P_{W_n})_n$ converges weakly to $P_{W^{\circ}}$ (this is easy to see but see the proof of Lemma 9.2.\ of GD for an explicit proof). Since also  $f$ is bounded, $f(W_n)$ converges to $f(W^{\circ})$.  Also, for some $K \in \reals$, for all $n$, we have $H(W_n) \geq K$ so for some $K' \in \reals$, by boundedness of $f$, we have, for all $n$, $D(P_{W_n} \| P^*_0) \leq K'< \infty$.  By Posner's (\citeyear{Posner75}) theorem, $D(P_W \| P^*_0)$ is lower semi-continuous in its first argument.
Posner only proves the result for $P^*_0$ a probability measure; but it still holds even if $P^*_0$ is a strict sub-probability measure, since then $p'(\bY) = p^*_0(\bY)/\int p^*_0(\bY) d \mu = 1$ represents a distribution $P'$ and the result follows by applying Posner's result to $P'$ and noting that $D(P_W \| P^*_0) = D(P_W \| P') + C$ for some constant $C$ not depending on $W$.

The lower-semicontinuity in the first argument implies $D(P_{W^{\circ}} \| P^*_0) \leq \lim \inf_{n \rightarrow \infty} D(P_{W_n} \| P^*_0) \leq K'< \infty$. Following an argument exactly parallel to the proof of well-definedness for $L(\Wn,\Wu)$ given above, it now follows that $L(W^{\circ}, \Wu)$ is well-defined for all $\Wu \in \cW_1$ as required. 
Next,  using (\ref{eq:distrophy}), we see that it is sufficient to show that for all $W \in \cW_1$:
$$
D(P_{W} \| P_{W^{\circ}}) \leq  
\lim \inf_{n \rightarrow \infty} D(P_W \| P_{W_n}).$$ 
But this again follows directly from Posner's theorem, which also says that KL divergence is lower semi-continuous in its {\em second\/} argument. 
\subsection{Remarks on and Checking of Conditions for Theorem~\ref{thm:maina}}
\label{app:regularityconditions}
\paragraph{The Full Support and Finite KL Condition}
Requiring full support in the simplest version of the theorem ensures that $\eval^*$ is a.s.\ well-defined: without it, there may be an outcome ${\bf y}$ such that for some $\theta \in \Theta_0$, $P_{\theta}({\bf y}) > 0$ whereas $P_{W_0}({\bf y})= Q({\bf y}) = 0$. Then $\eval^*$ is undefined with positive probability under this $\theta$.  The finite KL condition $D(P_{\theta} \| P_{\theta'}) < \infty$ imposed in the generalised versions of the theorem  is just slightly stronger than the full support condition. It is required to make sure that all expectations in the proof are well-defined.

For standard parametric models in standard parameterisations (e.g.\ all multivariate exponential families in their mean-value parameterisation), both conditions will hold automatically as long as one excludes  points at the boundary of the parameter space, if those exist. For example, in the $2\times 2$ setting without a pre-specified effect size we restrict $\Theta_1$ to $(0,1)^2$, requiring the Bernoulli probabilities $\mu_{1|a}$ and $\mu_{1|b}$ to be non-degenerate.  But, since the condition only refers to $\Theta_1$, it is o.k.\ to set $\Theta_0= [0,1]$ to include the boundary points in the null.

\paragraph{Additional Condition: Existence of $W^*_1$}
The requirement for composite $\cH_1$ 
that a ${W^*_1}$ exists achieving the minimum in (\ref{eq:jipr}) is strong in general,  but it holds
in all our examples with composite $\cH_1$: Example~\ref{ex:ponential} ($W_1^*$ is shown to be a point prior in the example), 
Example~\ref{ex:jeffreys} (since there we restrict $\Theta_1$ to be compact) and Example~\ref{ex:2x2c} and~\ref{ex:2x2b} (here verifying the condition requires some work, see below). 
It also holds in  the $t$-test setting underneath Theorem~\ref{thm:particular} with effect sizes $\delta^+$ and $\delta^-$ ($W_1^*$ reduces to a point prior on $\delta^+$). 
By allowing $\S$-variables to be functions of 
$\bV$ that are themselves functions of  $\Ydata$ (i.e.\ $\sigma(\Ydata)$-measurable) as in the latter example, we make the condition considerably
weaker.

\paragraph{Applicability of Theorem~\ref{thm:maina} and existence of minimizing $W^*_1$ and $W^*_0$ in Example~\ref{ex:2x2c} and \ref{ex:2x2b}}
We have  $\cH_1 = \{P_{\mu_{1|a},\mu_{1|b}} : (\mu_{1|a},\mu_{1|b}) \in\Theta_1 \}$ and $\cH_0 = \{P_{\mu}: \mu \in \Theta_0 \}$, $\Theta_0 = [0,1]$ with definitions as in Example~\ref{ex:2x2}. In Example~\ref{ex:2x2c}, we take  $\Theta_1 = (0,1)^2$ and we can take $\Theta_0 = [0,1]$ or $\Theta_0 = (0,1)$ (the same minima will be achieved in both cases). In Example~\ref{ex:2x2b} we take $\Theta_1 = \{ (\mu_{1|a},\mu_{1|a}+ \delta): 0 < \mu_{1,a} < 1- \delta \}$ and $\Theta_0= (0,1)$.   

We only give the proof for Example~\ref{ex:2x2c}; the proof for Example~\ref{ex:2x2b} is entirely analogous. 

The requirement for applying Theorem~\ref{thm:maina} that $P_{\theta}, P_{\theta'}$ with $\theta,\theta' \in \Theta_1$ satisfy $D(P_{\theta} \| P_{\theta'}) < \infty$, and have full support trivially holds by our exclusion of the boundary points in $\Theta_1$. The only remaining condition for applying Theorem~\ref{thm:maina}  is the existence of a KL minimizing prior $W^*_1$. We will show the stronger result that there exists a pair of minimizing priors  $(W^*_1, W^*_0)$ with $W^*_1 \in \cW(\Theta_1)$ and $W^*_0 \in \cW(\Theta_0)$ such that 
\begin{equation}\label{eq:klparty}
\inf_{W_1 \in \cW(\Theta_1), W_0 \in \cW(\Theta_0)}
(D(P_{W_1} \| P_{W_0}) - f(W_1) )=  D(P_{W^*_1} \|
P_{W^*_0}) - f(W^*_1)  < \infty,
\end{equation}
with $f(W) = \Exp_{(\mu_{1|a},\mu_{1|b}) \sim W} [f(\mu_{1|a},\mu_{1|b})]$ and $f(\mu_{1|a},\mu_{1|b}) =D(P_{\mu_{1|a},\mu_{1|b}} \| P_{\mu^{\circ}})$ with $\mu^{\circ}$ as in (\ref{eq:eval2x2}). 
We do this by first, in Part (a), showing that there exists such a pair with $W^*_1 \in [0,1]^2$, i.e.\ with $\Theta_1$ extended to include its boundary points. We then, in Part (b), show that the resulting $W^*_1$ puts no mass on these boundary points, so that it also achieves the minimum on $\cW(\Theta_1)$. 

{\em Part (a)} The sets $\cW([0,1]^2)$ and  $\cW(\Theta_0)$ are convex and compact in the weak topology; by 
Posner's (\citeyear{Posner75}) theorem, 
$D(P_{W_1} \| P_{W_0})$ is lower-semicontinuous in its second argument in the weak topology on $\{P_{W_0} : W_0 \in \cW(\Theta_0) \}$ and hence on $\cW(\Theta_0)$ itself (see Section 9 of GD) and $f(W)$ is linear and bounded on $\cW(\Theta_0)$; this shows that for each $W_1$, the corresponding minimizing $W^*_0$ is achieved; since $D(P_{W_1} \| P_{W^*_0}) \leq D(P_{W_1} \| P_{1/2}) < \infty$ (with $P_{1/2} \in \cH_0$ representing Bernoulli$(1/2)$) and $f$ is bounded, the finiteness in (\ref{eq:klparty}) is guaranteed as well. To see that the minimum $W_1$ is achieved as well, note that, again by Posner's theorem, $D(P_{W_1} \| P_{W_0})$ is also lower-semicontinuous in its first argument in  the weak topology on $\{P_{W_1} : W_1 \in \cW([0,1]^2) \}$. The same argument as before now gives that the minimum $W^*_1$ is achieved. 

{\em Part (b)} It now suffices to show that $P_{W^*_1}$ has full support, for this implies that $W^*_1$ assigns mass $1$ on $\Theta_1 = (0,1)^2$ and then $W^*_0$ must assign mass $1$ on $(0,1)$ (otherwise the KL divergence in (\ref{eq:klparty}) would be infinite, and we already established it is not).  
To show full support of $P_{W^*_1}$, note that by symmetry considerations, we must have that, with our choice of $f$, 
\begin{equation}
\label{eq:symmetry}
M:= D(P_{W^*_1} \| P_{W^*_0} ) - f(W^*_1) = 
D(P_{W^{\circ}_1} \| P_{W^{\circ}_0} ) - f(W^{\circ}_1) 
\end{equation}
for a prior $W^{\circ}_1$ such that, for all ${\bf y} \in \{0,1\}^n$,  $p_{W^{\circ}_1}({\bf y} \mid {\bf x})=
p_{W^{*}_1}({\bf y}' \mid {\bf x})$ with ${\bf y}'$ is the modification of ${\bf y}$ with all $0$s and $1$s interchanged, and similarly for $W^{\circ}_0$.  
Now if $P_{W^*_1}$ would not have full support, we have $P_{W^*_1}(Y_1 = y_1,Y_2 = y_2 \mid X_1= a,X_2=b) = 0$  for some   $(y_1,y_2) \in \{0,1\}^2$. Then $P_{W^{\circ}_1}(Y_1 = \bar{y}_1,Y_2 = \bar{y}_2 \mid X_1= a,X_2=b) = 0$  for $\bar{y}_j = 1- y_j$. 
But  $\inf_{W_0 \in \cW(\Theta_0)} D(P_{W^*_1} \| P_{W_0} ) - f(W^*_1)$ as a function of $W^*_1$ is easily checked to be strictly convex on $\cW(\Theta_1)$, so by (\ref{eq:symmetry}) we must have that, for $W' = (1/2) W^*_1 + (1/2) W^{\circ}_1$, it holds that 
$\inf_{W_0 \in \cW(\Theta_0)} D(P_{W'}\| P_{W_0}) + f(W') < 
M$. 
But this contradicts that $M$ is the minimum. 
This shows that $P_{W^*_1}$ has full support.

\paragraph{Finiteness of Support in Example~\ref{ex:2x2c} and~\ref{ex:2x2b}}
We claimed that the supports of the priors $W^*_1$ and $W^*_0$ in Example~\ref{ex:2x2b} (restricted $\Theta_1$) are finite. In fact they are finite also in Example~\ref{ex:2x2c} (unrestricted $\Theta_1$).
We verify this for $W^*_1$, the case for $W^*_0$ is analogous. Note that for given sample size $n$, the probability
distribution $P_W$ is completely determined by the probabilities
assigned to the sufficient statistics $N_{1|a}, N_{1|b}$. This means
that for each prior $W \in \cW(\Theta_1)$, the Bayes marginal $P_W$
can be identified with a vector of $M_n \coloneqq (n_a +1) \cdot (n_b +1)$
real-valued components. Every such $P_W$ can also be written as a
mixture of $P_{\theta}$'s for
$\theta = (\mu_{a|1},\mu_{b|1}) \in \Theta_1$, a convex set. By Carath\'eodory's
theorem we need at most $M_n$ mixture  components to describe an
arbitrary $P_W$ as a mixture of the $P_{\theta}$'s; this proves the claim. 
\section{Additional Clarifications and Proofs}
\subsection{Section~\ref{sec:os}}
\label{app:first}
We discuss two extensions of the filtration to be used in Definition~\ref{def:cond.safe}. 
For concreteness and simplicity we consider the sequential set-up of 
Section~\ref{sec:GROanalysis} in which, for each study $m$,  there is an underlying data stream $Y_{m,1}, Y_{m.2}, \ldots$. 
\commentout{\paragraph{Filtration: Subtleties}
The natural filtration to use if there is no additional nonstochastic side-information, is $(\sigma(\bY^{(j)}))_{(j)}$, i.e.\ when specifying a new $\S$-variable $\eval_{(j+1)}$ we are allowed to make use of all data we observed in the past.
However, the $\S$-variable assignment based on the $t$-test leads to a filtration  that is coarser than  $\sigma(\bY^{(j)})= \sigma(Y^{T_{(j)}})$, as can be seen from (\ref{eq:condy}) in Section~\ref{sec:GROanalysis}. A similar phenomenon may ocur in other settings with nuisance parameters expressing group invariance. 
To illustrate, consider a fully sequential assignment as defined in that section. Then  $U_i = V_i = Y_i / |Y_1|$, so if one knows all the $U_i$ one still does not know the $Y_i$. The only repercussion of this is that the stopping times $T_{(j)}$ of the individual studies are now not allowed to directly depend directly on the $Y_i$, since this is not measurable. Under standard (e.g.\ fixed, or stopping when the likelihood overshoots some threshold --- the likelihood can be written as a function of the $V_i$ and thus {\em is\/} measurable) stopping times all $\eval_{(j)}$ will still be $\S$-variables and Proposition~\ref{prop:optional.stopping} can still be applied and Ville-Robbins still holds --- so the restriction is quite harmless.
}
\paragraph{Filtration: Conditional Distributions}
In the $2 \times 2$ setting the $\Theta_1$ represent conditional distributions of $Y \mid X$. 
While neither Section~\ref{sec:os} nor Section~\ref{sec:GROanalysis} formally allowed for that setting, the extension is straightforward. We simply assume the underlying streams are of the form $(X_{m,1},Y_{m,1}), (X_{m,2}, Y_{m,2}), \ldots$ 
(with, in the $2 \times 2$ example, $X_{m,i} \in \{a,b\}$). The distributions in $\cH_1$ are now extended to define a random process of independent outcomes with the same conditional distribution for a single such stream $(X_1,Y_1), (X_2,Y_2), \ldots$, i.e.\ for all $\theta_1 \in \Theta_1$, we set $p_{\theta_1}(y^n \mid x^n) = 
P_{\theta_1}(Y^n =y^n \mid X^n = x^n) := \prod_{i=1}^n  p_{\theta_1}(y_i \mid x_i)$. We then only need to extend Definition~\ref{def:cond.safe} of conditional $\S$-variables to deal with this extension. This is achieved by setting $\cF_{(m-1)}$ in the definition to $\sigma(\bY^{(m-1)},\bX^{(m)})$.
\paragraph{Filtration: Side Information}
Now we consider how the set-up can be extended to deal with side-information that may be used e.g.\ after $j$ studies to decide whether to start a new, $j+1$st study at all, and if so, what the sample size of that study will be.
%
For this we again need to extend $\cH_0$ and $\cH_1$ so that its elements define a conditional random process, with at the time that the $j$-th study has just been observed, also the additional variables $\bR_{(1)}, \ldots, \bR_{(j)}$ observed. Even if there are underlying streams of data $Y_{j,1}, Y_{j,2}, \ldots$ so that the $\bY_{(j)}$ have an internal structure, the $\bR_{(j)}$ are not required to have such a structure.
To make all desired probabilities well-defined and at the same time make sure that the side-information is really external, we impose a conditional independence on the underlying stream: 
under each $P \in \cH_0$, for each $m,i \in \naturals$, 
the conditional distribution of $Y_{m,i}$ given $Y_{m}^{i-1}$ and $\bR^{(m-1)}$ is defined to be the same as the distribution under $P$ of $Y_{m,i}$ given $Y_m^{i-1}$, which is already well-defined once $\cH_0$ is specified. With this definition, we can set $\cF_{(j)} = \sigma(\bY^{(j)}, \bR^{(j)})$ (or $\cF_{(j)} = \sigma({\bf Y}^{(j)}, \bR^{(j)}, \bX^{(j+1)})$ if the $\cH_j$ already contain conditional distributions, or $\cF_{(j)} = \sigma({\bf U}^{(j)}, \bR^{(j)}, \bX^{(j+1)})$ with ${\bf U}^{(j)}$ a coarsening of $\bY^{(j)}$ if required, such as in the $t$-test setting). Focusing on the simplest case with $\cF_{(j)} = \sigma(\bY^{(j)}, \bR^{(j)})$, the construction ensures that 
$\Exp_P[\eval_{(j)} \mid \bY^{(j-1)},\bR^{(j-1)}] \leq 1$ 
(and hence we have safety under OC) 
in both cases discussed in Section~\ref{sec:GROanalysis}: we can either use the plug-in application of any $\S$-variable specification, or, if the specification is {\em seqdec}, we can also use the sequential application.  Note that with this construction, the $\bR^{(j)}$ are allowed to depend on $\bY_{(1)}, \ldots, \bY_{(j)}$ in unspecified ways. This is unproblematic because (other than in the case with conditional distributions and $\bX^{(j+1)}$) the $\eval_{(j+1)}$ cannot depend on $\bR^{(j+1)}$. For example, based on what she sees in the $\bY_{(j)}$, your boss may decide to announce `we have money to do an additional study with 100 patients', which can be encoded as a particular outcome of $\bR_{(j)}$. This may then be used to decide to continue (i.e.\ set $\tau$ in Proposition~\ref{prop:optional.stopping} to be larger than  $j$) and set $N_{(j+1)}$ to be $100$.


\subsection{Section~\ref{sec:grow}}
\label{app:remainingproofsGROW}
\paragraph{Proof of Proposition~\ref{prop:dominance}}
The monotone likelihood ratio property implies stochastic dominance \citep{lehmann2005testing}, i.e.\ with $P{[T]}$ denoting the distribution of the statistic $T$,
we must have $\Exp_{P_{\delta}{[T]}}[f(T)] \geq
\Exp_{P_{\delta'}}[f(T)]$ for $\delta \geq \delta'$ and every increasing function $f$. This implies that
\begin{equation}\label{eq:painters}
D(P_{\delta^+} \| P_{\delta^-}) =     \Exp_{\Ydata \sim P_{\delta^+}} \left[ \log
\frac{p_{\delta^+}(\Ydata)}{p_{\delta^-}(\Ydata)} \right] =
\inf_{\delta \geq \delta^+} \Exp_{\Ydata \sim P_{\delta}} \left[ \log
\frac{p_{\delta^+}(\Ydata)}{p_{\delta^-}(\Ydata)} \right].
\end{equation}
We also have, by the same stochastic dominance result, 
for $\delta \leq \delta^-$,
\begin{align*}
\Exp_{\bY \sim P_{\delta}}\left[ 
\frac{p_{\delta^+}(\bY)}{p_{\delta^-}(\bY)}\right]
\leq \Exp_{\bY \sim P_{\delta^-}}\left[ 
\frac{p_{\delta^+}(\bY)}{p_{\delta^-}(\bY)}\right] = 1,
\end{align*}    
so that $\eval^* = p_{\delta^+}(\bY)/p_{\delta^-}(\bY)$ is an $\S$-variable, which directly leads to the first inequality in the chain of (in)equalities (\ref{eq:colours}) below. The first equality follows by (\ref{eq:painters}), the second because, since $\eval^*$ is of form $p_{\delta^+}/p_{W_0}$, with $W_0 \in \cE(\Theta_0)$ (namely, $W_0$ puts all mass on $\delta^-$), it must, by Corollary~\ref{cor:justone}, be the GRO-$\S$-variable for testing between  $\Theta'_1 = \{\delta^+\}$ and $\Theta_0$. The final two inequalities are immediate: 
\begin{align}\label{eq:colours}
& \sup_{\eval\in \cE(\Theta_0)} \inf_{\delta \geq \delta^+} \Exp_{\Ydata \sim P_{\delta}} \left[ 
\log \eval \right]   \geq  \inf_{\delta  \geq \delta^+} \Exp_{\Ydata \sim P_{\delta}} \left[  \log
\frac{p_{\delta^+}(\Ydata)}{p_{\delta^-}(\Ydata)} \right]
=   \Exp_{\Ydata \sim P_{\delta^+}} \left[ \log
\frac{p_{\delta^+}(\Ydata)}{p_{\delta^-}(\Ydata)} \right] = \nonumber
\\   &  \sup_{\eval\in \cE(\Theta_0)} \Exp_{\Ydata \sim P_{\delta^+}} \left[ 
\log \eval \right] \geq \inf_{\delta \geq \delta^+}
\sup_{\eval\in \cE(\Theta_0)} \Exp_{\Ydata \sim P_{\delta}} \left[ 
\log \eval \right] \geq \sup_{\eval\in \cE(\Theta_0)} \inf_{\delta \geq \delta^+} \Exp_{\Ydata \sim P_{\delta}} \left[ 
\log \eval \right].
\end{align}
This chain of inequalities implying that all its parts are equal, the result follows. 
\subsection{Section~\ref{sec:group}}
\label{app:remainingproofsttest}
\paragraph{Proof of (\ref{eq:suffu})}
\eqref{eq:suffu} follows from \cite[page 273]{lai1976confidence} or as special case of Theorem 2.1.\ of \cite{berger1998bayes}, but the first proof leaves out details and the second is very abstract, so for convenience we give a direct proof. For simplicity restrict to the case with $W$ putting all its mass on a particular $\delta$. Fix arbitrary $\sigma > 0$ and $n \geq 2$ and  note that $V_1 \in \{-1,1\}$ and  $P_{\delta}(V_1 = 1)= P_{\delta,\sigma}(Y_1 > 0)$
(note that $p'_{W[\delta]}(V_1)$ is a probability mass function, whereas $p'_{W[\delta]}(V_i \mid V^{i-1})$ is defined as density relative to Lebesgue measure for $i> 1$). We must then have:
\begin{align*}
& P_{\delta}(V_1=1) \cdot p'_{\delta}(v_2, \ldots, v_n \mid V_1 = 1) 
= P_{\delta}(V_1=1) \cdot \int_0^\infty 
p_{\delta/|y_1|,\sigma/|y_1|}(v^n \mid Y_1 = y_1) 
p_{\delta,\sigma}(y_1 \mid Y_1 > 0) d y_1 \\
& = \int_0^{\infty} \prod_{i=2}^n  \left( 
\frac{1}{\sqrt{2 \pi} } \cdot \frac{|y_1|}{\sigma} \cdot e^{- \frac{1}{2}
	\left( \frac{v_i}{\sigma/|y_1|} - \delta \right)^2} \right) \cdot 
\left( \frac{1}{\sqrt{2 \pi \sigma^2}} e^{- \frac{1}{2} \left(\frac{y_1}{\sigma} - \delta \right)^2 }\right)   d y_1   \\
&= \int_0^{\infty} \prod_{i=1}^n  \left( 
\frac{1}{\sqrt{2 \pi} } \cdot \frac{|y_1|}{\sigma} \cdot e^{- \frac{1}{2}
	\left( \frac{v_i}{\sigma/|y_1|} - \delta \right)^2} \right) \cdot \frac{1}{|y_1|} d y_1 \\ 
&= \int_0^{\infty} \prod_{i=1}^n  \left( 
\frac{1}{\sqrt{2 \pi} } \cdot \frac{1}{\tau} \cdot e^{- \frac{1}{2}
	\left( \frac{v_i}{\tau} - \delta \right)^2} \right) \cdot \frac{\tau}{\sigma } \left|\frac{d y_1}{ d \tau}\right|  d \tau   = 
\int_0^{\infty} \prod_{i=1}^n  \left( 
\frac{1}{\sqrt{2 \pi} } \cdot \frac{1}{\tau} \cdot e^{- \frac{1}{2}
	\left( \frac{v_i}{\tau} - \delta \right)^2} \right) \cdot \frac{1}{\tau} d \tau 
\end{align*}
Here in the first equality we used that, given $Y_1= y_1$, the $V_i$ are independent Gaussian,  with variance $\sigma/|y_1|$ and mean $\delta \sigma/|y_1|$ hence effect size $\delta/|y_1|$ and hence density $p_{\delta/|y_1|,\sigma/|y_1|}$. The second equality replaces the conditional density of $Y_1$ by the marginal (so that $P_{\delta}(V_1=1)$ cancels) and exploits that $y_1 = v_1 |y_1|$, the third is a standard change-of-variable from  $\sigma/|y_1|$ to $\tau$ using the Jacobian transformation and the fourth is immediate.  

This shows the desired result if $V_1=1$. The case for $V_1 = -1$ is analogous.
\commentout{goes analogously as follows:
	\begin{align*}
	& P_{\delta}(V_1= - 1) \cdot p'_{\delta}(v_2, \ldots, v_n \mid V_1 = - 1) \\ & = P_{\delta}(V_1= - 1) \cdot \int_{- \infty}^0 
	p_{\delta/|y_1|,\sigma/|y_1|}(v^n \mid Y_1 = y_1) 
	p_{\delta,\sigma}(y_1 \mid Y_1 < 0) d y_1 \\
	& = \int_{-\infty}^0 \prod_{i=2}^n  \left( 
	\frac{1}{\sqrt{2 \pi} } \cdot \frac{|y_1|}{\sigma} \cdot e^{- \frac{1}{2}
		\left( \frac{v_i}{\sigma/|y_1|} - \delta \right)^2} \right) \cdot 
	\left( \frac{1}{\sqrt{2 \pi \sigma^2}} e^{- \frac{1}{2} \left(\frac{y_1}{\sigma} - \delta \right)^2 }\right)   d y_1   \\
	& = \int_{-\infty}^0 \prod_{i=2}^n  \left( 
	\frac{1}{\sqrt{2 \pi} } \cdot \frac{|y_1|}{\sigma} \cdot e^{- \frac{1}{2}
		\left( \frac{v_i}{\sigma/|y_1|} - \delta \right)^2} \right) \cdot 
	\left( \frac{1}{\sqrt{2 \pi \sigma^2}} e^{- \frac{1}{2} \left(\frac{v_1 |y_1|}{\sigma} - \delta \right)^2 }\right)   d y_1   \\
	&= \int_0^{\infty} \prod_{i=1}^n  \left( 
	\frac{1}{\sqrt{2 \pi} } \cdot \frac{|y_1|}{\sigma} \cdot e^{- \frac{1}{2}
		\left( \frac{v_i}{\sigma/|y_1|} - \delta \right)^2} \right) \cdot \frac{1}{|y_1|} d y_1 \\ 
	&= \int_0^{\infty} \prod_{i=1}^n  \left( 
	\frac{1}{\sqrt{2 \pi} } \cdot \frac{1}{\tau} \cdot e^{- \frac{1}{2}
		\left( \frac{v_i}{\tau} - \delta \right)^2} \right) \cdot \frac{\tau}{\sigma } \left|\frac{d y_1}{ d \tau}\right|  d \tau   = 
	\int_0^{\infty} \prod_{i=1}^n  \left( 
	\frac{1}{\sqrt{2 \pi} } \cdot \frac{1}{\tau} \cdot e^{- \frac{1}{2}
		\left( \frac{v_i}{\tau} - \delta \right)^2} \right) \cdot \frac{1}{\tau} d \tau 
	\end{align*}}
\paragraph{Applicability of Proposition~\ref{prop:dominance} to the $t$-test Setting}
(Section~\ref{sec:group} underneath Theorem~\ref{thm:particular})
A simple calculation gives that 
$p'_{\delta^+}(\bV)/p'_{\delta^-}(\bV)$ can be re-expressed as a density ratio of the $t$-statistic $T = t(\bf Y)$, i.e.\ 
$p'_{\delta^+}(\bV)/p'_{\delta^-}(\bV)= p''_{\delta^+}(t(\bY))/p'_{\delta^-}(t(\bY))$, where $p''_{\delta}$ is the density of a noncentral $t$-distribution with $\nu:= n - 1$ degrees of freedom and noncentrality parameter $\mu=
\sqrt{n} \delta$. But these densities are well-known to form a monotone likelihood ratio family in the $T$ statistic, so that we can apply Proposition~\ref{prop:dominance} to $\{ p''_{\delta} : \delta \in \Delta \}$.

\subsection{Section~\ref{sec:competitive}}
\label{app:brown}
\paragraph{Determining Sample Size for a Desired Power}
Consider a 1-sided test for the normal location family with variance $1$ which rejects if $\hat\mu \geq f(n)/\sqrt{n}$ where $\hat\mu$ is the MLE at sample size $n$ and $f$ is some increasing function of $n$. We want to find the smallest $n$ at which we achieve power $1-\beta$ under mean $\delta$, i.e.\ such that $$P_{\delta}(\sqrt{n}(\hat\mu - \delta) \geq f(n) - \delta \sqrt{n})\geq 1-\beta,$$ where under  $P_{\delta}$, the $Y_1, \ldots,Y_n$  are i.i.d.\ $N(\delta,1)$. This is the smallest $n$ at which 
$f(n) - \delta \sqrt{n} \geq - z_{\beta}$, i.e.\ 
$\sqrt{n} \geq (z_{\beta}+ f(n))/\delta$. 
The standard result for $n_{\textsc{np}}$ now follows by taking $f(n) = z_{\alpha}$. For the Bayesian test, to very good approximation, $f(n) = \sqrt{6 + \log n}$ (Example~\ref{ex:normal}). 
Since, $n_{\textsc{Bayes}}$, the smallest $n$ for the Bayesian test must be larger than $n_{\textsc{np}}$, it satisfies $n_{\textsc{Bayes}}/n_{\textsc{np}} \geq (z_{\beta} + \sqrt{6 + \log n_{\textsc{Bayes}}})^2 /(z_{\beta} + z_{\alpha})^2 \geq (z_{\beta} + \sqrt{6 + \log n_{\textsc{np}}})^2 /(z_{\beta} + z_{\alpha})^2$, giving a logarithmic factor as claimed. 
\paragraph{Brownian Motion}
Fix $a > 0$ and, for a standard Brownian motion $X_t = B_t + c t $ with drift $c$, define $S = \min \{t > 0: X_t \geq a \}$. The distribution of $S$ is well-known (see e.g.\ \citep{BhattacharyaW21}) and has density  given by
\begin{equation}\label{eq:hittingtime}
f_{a,c}(s) =  \frac{a \exp \left(- \frac{(a- cs)^2}{2s} \right)}{\sqrt{2 \pi s^3}}.
\end{equation}
Fix $\delta$ and let  $n_t = t/\delta^2$ and  $\cT = \{ \delta^2, 2 \delta^2, 3 \delta^2, \ldots\}$. Consider, for $t \in \cT$, the discrete time process 
$$W_t := \log \frac{ p_{\delta}(Y^{n_t})}{p_0(Y^{n_t})}$$
where $p_{\delta}$ is the density of $Y^n$ under $N(\delta,1)$. 
Writing out the definition and re-arranging, we find that if $Y_1, Y_2, \ldots$ are i.i.d.\ $\sim N(\delta,1)$, then for all $\cT' \subset \cT$
the conditional distribution of $W_t$ given $\{W_t : t \in \cT'\}$ (in particular, this includes the marginal distribution of $W_t$ if we take $\cT'$ empty) agrees with the conditional distribution of $X_t = B_t + (1/2) t$ and we can thus approximate the distribution of the first time when $W_t$ exceeds $- \log \alpha$ by the distribution with density (\ref{eq:hittingtime}) with $c=1/2$ and $a = -\log \alpha$ --- the distribution of the first hitting time of $B_t$ will be shifted slightly to the left, because when stopping $W_t$ we are only checking the process at intervals of size $\delta^2$. Intuitively, as $\delta \downarrow 0$, we expect the distribution functions to converge. To make this concrete, let $q_{\beta}$ be the $\beta$-quantile of the distribution given by $f_{-\log\alpha ,1/2}(s)$. 
We want to calculate $n_{\max}(\beta)$, the smallest $n$ such that $P_{\delta}(\tau_1 \leq n) > 1- \beta$, i.e.
we want to find the smallest $n$ such that, with $t^* = \delta^2  n$, we have 
$P_{\delta}(\delta^2 \tau_1 > t^*) < \beta$. The correspondence between $W_t$ and $X_t$ suggests that in the  
limit for $\delta \downarrow 0$, $t^*$ converges to 
$q_{\beta}$, giving that $n_{\max}(\beta) \sim  C_{\beta,0}/\delta^2$ with $C_{\beta,0} = q_{\beta}$ and $\Exp_{P_{\delta}}[\tau_{\beta}]\sim C'_{\beta,0} /\delta^2$ with 
$C'_{\beta,0} = \int_0^{q(\beta)} t f(t) d t + \beta q_{\beta}$.  

\end{document}